\theoremstyle{plain}
\newtheorem{theo}{Theorem}[section]
\newtheorem{cor}[theo]{Corollary}
\newtheorem{lem}[theo]{Lemma}
\newtheorem{prop}[theo]{Proposition}
\theoremstyle{definition}
\newtheorem{ex}[theo]{Example}
\newtheorem{rem}[theo]{Remark}
\def \Z{{\mathbb Z}}
\def \Q{{\mathbb Q}}
\def \Gal{\mathrm{Gal}}
\author{Yu Hashimoto}
\address{Department of Mathematics,
Interdisciplinary Faculty of Science and Engineering,
Shimane University,
Matsue, Shimane, 690-8504, Japan}
\email{hashimoto-yu1@ict.shimanet.ed.jp}
\author{Miho Aoki}
\address{Department of Mathematics,
Interdisciplinary Faculty of Science and Engineering,
Shimane University,
Matsue, Shimane, 690-8504, Japan}
\email{aoki@riko.shimane-u.ac.jp}
\subjclass[2020]{ Primary 11R04, 11R20,  Secondary 11C08, 11L05, 11R80.    } 
\keywords{normal integral basis, cyclic quintic  field, Gaussian period,  tamely ramified extension} 
\thanks{This work was supported by JSPS KAKENHI Grant Number JP21K03181}
\title[Cyclic quintic fields]{Normal integral bases of \\ Lehmer's  cyclic quintic fields}
\begin{document}
\maketitle

\begin{abstract}
Let $K_n$ be a  tamely ramified cyclic quintic field   generated by a root of Emma Lehmer's parametric polynomial.
We give all normal integral bases  for $K_n$ only by the roots of the polynomial,  which is a generalization of the work of Lehmer in the case 
that  $n^4+5n^3+15n^2+25n+25$ is prime number, 
and Spearman-Willliams 
 in the case that  $n^4+5n^3+15n^2+25n+25$ is square-free.

\end{abstract}

\section{Introduction}\label{sect:intro}

Let $n$ be an integer.
We consider Emma Lehmer's quintic polynomial $f_n(X)\in\Z[X]$ defined by
\begin{align}\label{eq:f_n}
f_n(X)  := & X^5+n^2X^4-(2n^3+6n^2+10n+10)X^3  \notag \\
& +(n^4+5n^3+11n^2+15n+5)X^2+
(n^3+4n^2+10n+10)X+1, 
\end{align}
(see \cite{Leh}).
The polynomial $f_n(X)$ is irreducible for any $n\in\Z$,
and $K_n:=\Q(\rho_n)$ is a cyclic quintic field where $\rho_n$ is a root of 
$f_n(X)$ (see \cite{SW}).
Put $\delta_n := n^3+5n^2+10n+7$ and $\Delta_n:=n^4+5n^3+15n^2+25n+25$.
We have
$(n^3+5n^2+10n+18)\delta_n -(n^2+5n+5)\Delta_n=1$ and hence $(\delta_n, \Delta_n)=1$.
For  a primitive fifth root of unity $\zeta=\zeta_5$, we have $\Delta_n =N_{\Q (\zeta)/\Q} (n+2+2\zeta^4+\zeta^2) ( >0)$,
where $N_{\mathbb Q(\zeta)/\mathbb Q}$ is the norm map from $\mathbb Q (\zeta)$ to $\mathbb Q$.
Let $v_p(x)$ be the $p$-adic valuation of $x \in \Q$ for a prime number $p$.
The discriminant of  $f_n(X)$ is $d(f_n)=\delta_n^2 \Delta_n^4$, 
and the conductor $\mathfrak{f}_{K_n}$ 
of $K_n$ has been determined by Jeannin \cite[Th\'eor\`eme~1]{Je} as follows.

\begin{equation}\label{eq:cond}
\mathfrak{f}_{K_n}=5^c \!\!\!\!\!\!\!\!\!  \prod_{\substack{
p\mid \Delta_n,\, p\ne 5
\\
v_p(\Delta_n )\not\equiv 0 \!\!\!\! \pmod{5}
}} \!\!\!\!\!\!\!\!\! p,\quad 
c=
\begin{cases} 
0 & (\text{if}\ 5\nmid n),
\\ 
2 &  (\text{if}\ 5\mid n),
\end{cases}
\end{equation}
and he proved that  any prime number $p$ dividing $\Delta_n$ satisfies $p=5$ or $p\equiv 1 \pmod{5}$
(\cite[Lemme~2.1.1]{Je}).
Especially, $K_n/\mathbb{Q}$ is tamely ramified if and only if $5\nmid n$.
Note that  it is known from the conductor-disctiminant formula that 
$D_L=\mathfrak{f}_L^{p-1}$ for a cyclic number field $L$ of odd
prime degree $p$ (\cite[Theorem~3.11]{Was}), and hence we have
\begin{equation}\label{eq:disc}
D_{K_n}=\mathfrak{f}_{K_n}^4
\end{equation}

Let $K/\Q$ be a finite Galois extension with  Galois group G.
Let $\mathcal O_K$  denote the ring of integers of $K$.
If the set $\{\sigma(\alpha)\mid\sigma\in G\}$ is a basis of $\mathcal O_K$ 
over $\Z$, we call the basis {\it a normal integral basis}
(NIB), and $\alpha$ the generator.
If $K/\mathbb{Q}$ has an NIB, then the extension is 
tamely ramified. 
For abelian number fields, we have the following (\cite[Chap.~9, Theorem 3.4]{L}).

\begin{theo}[Hilbert-Speiser]
Let $K$ be an abelian number field.
The following three conditions are equivalent. 
\begin{itemize}
\item[(i)]
$K/\Q$ is tamely ramified.
\item[(ii)]
The conductor of $K$ is square-free. 
\item[(iii)]
$K/\Q$ has a normal integral basis.
\end{itemize}
\end{theo}
Let $K$ be an abelian number field with  conductor $\mathfrak f$.
We call the conjugates of $\mathrm{Tr}_{\mathbb Q(\zeta_{\mathfrak f})/K} (\zeta_{\mathfrak f})$
{\it Gaussian periods}, where  $\mathrm{Tr}_{\mathbb Q(\zeta_{\mathfrak f})/K}$ is the trace map from 
$\mathbb Q(\zeta_{\mathfrak f})$ to $K$.
A Gaussian period is a generator of an NIB of $K$ if $K/\mathbb Q$ is tamely ramified
(\cite[Proposition 4.31]{N}).
We describe the NIBs of $K_n$ using only the roots of the polynomial $f_n(X)$,
and do not use a root of unity $\zeta_{{\mathfrak f}_{K_n}}$
which depends on the conductor of $K_n$.

From the Hilbert-Speiser  theorem, we know that  $K_n$ has an NIB if and only if 
$5 \nmid n$.
If $\Delta_n=p$ is a prime number, then $\mathfrak f_{K_n}=p$ and $K_n/\Q$ is tamely ramified.
Lehmer \cite[p.~539]{Leh} showed that $\left( \frac{n}{5} \right) \left(\rho_n +\left( n^2-\left( \frac{n}{5} \right)\right)/5 \right)$
is a Gaussian period in $K_n$. Therefore, it follows that $ \rho_n +\left( n^2-\left( \frac{n}{5} \right)\right)/5 $
is a generator of an NIB.
Spearman and Williams \cite{SpWi} proved the following.
\begin{theo}[Spearman-Williams] \label{theo:SW}
Assume that $5\nmid n$.
Then $K_n$ has a generator of a normal integral basis of the form $v+w\rho_n \ (v,w\in \Z)$
if and only if $\Delta_n$ is square-free. Furthermore, in this case, the integers $v$ and $w$ are given
by $w=\pm1$ and $ v=w\left( n^2-\left(\frac{n}{5} \right)\right)/5$, where $\left(\frac{n}{5} \right)$
is the Legendre symbol.
\end{theo}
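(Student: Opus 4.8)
The plan is to reduce the existence of a linear normal integral basis generator to a single discriminant identity, and then to read off the arithmetic constraints that identity imposes. Since $\rho_n$ is a root of the monic polynomial $f_n\in\Z[X]$, we have $\rho_n\in\mathcal O_{K_n}$, so every $\alpha=v+w\rho_n$ with $v,w\in\Z$ lies in $\mathcal O_{K_n}$ together with all its conjugates. Writing $G=\langle\sigma\rangle$ and $\rho_k:=\sigma^k(\rho_n)$, the system $\{\sigma^i(\alpha)\}_{i=0}^{4}$ spans a full sublattice of $\mathcal O_{K_n}$ whose discriminant equals $[\mathcal O_{K_n}:\sum_i\Z\sigma^i(\alpha)]^2\,D_{K_n}$ as soon as it is nonzero. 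Hence $\alpha$ generates an NIB if and only if
\begin{equation*}
\mathrm{disc}\big(\sigma^0(\alpha),\dots,\sigma^4(\alpha)\big)=D_{K_n}=\mathfrak f_{K_n}^4 .
\end{equation*}
Recall from \eqref{eq:cond} that for $5\nmid n$ one has $c=0$ and $5\nmid\Delta_n$, so $\mathfrak f_{K_n}=\prod_{p\mid\Delta_n,\,v_p(\Delta_n)\not\equiv0\,(5)}p$; in particular $\mathfrak f_{K_n}\mid\Delta_n$, with $\mathfrak f_{K_n}=\Delta_n$ exactly when $\Delta_n$ is square-free.

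First I would compute the left-hand side explicitly. Because $G$ is cyclic, the matrix $\big(\sigma^j\sigma^i(\alpha)\big)_{i,j}=(\alpha_{i+j})_{i,j}$ with $\alpha_k=v+w\rho_k$ is a circulant, so with $\zeta=\zeta_5$ and the Lagrange resolvents $\theta_t:=\sum_{k=0}^{4}\rho_k\zeta^{tk}$ its determinant factors as
\begin{equation*}
\det(\alpha_{i+j})=\pm\prod_{t=0}^{4}\Big(\sum_{k=0}^{4}\alpha_k\zeta^{tk}\Big)=\pm\,(5v-wn^2)\,w^{4}\!\prod_{t=1}^{4}\theta_t ,
\end{equation*}
where the $t=0$ factor is $5v+w\,\Tr_{K_n/\Q}(\rho_n)=5v-wn^2$ and the remaining factors use $\sum_k\zeta^{tk}=0$. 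Setting $P:=\prod_{t=1}^{4}\theta_t$ we obtain $\mathrm{disc}(\sigma^i(\alpha))=(5v-wn^2)^2w^{8}P^2$. A short Galois argument (the action $\zeta\mapsto\zeta^s$ permutes the $\theta_t$, while $\sigma$ multiplies each $\theta_t$ by $\zeta^{-t}$ and $\sum_{t=1}^{4}t\equiv0$ modulo $5$) shows $P\in\Z$, and total realness of $K_n$ (it is cyclic of odd degree) gives $P=|\theta_1|^2|\theta_2|^2>0$.

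The crux — and the step I expect to be the main obstacle — is the evaluation $P=\Delta_n^2$. Here the specific shape of Lehmer's polynomial must enter: the plan is to match the resolvents $\theta_t$ with the conjugates of $n+2+2\zeta^4+\zeta^2$, whose norm is $\Delta_n=N_{\Q(\zeta)/\Q}(n+2+2\zeta^4+\zeta^2)$, so that the resolvent product collapses to a power of this norm. Concretely I would either compute $P=\mathrm{Res}(X^5-1,\,\rho_0+\rho_1X+\cdots+\rho_4X^4)/(-n^2)$ and simplify using the coefficient relations of \eqref{eq:f_n}, or determine the explicit polynomial expressing $\sigma$ on $\rho_n$ and track the resulting change-of-basis determinant; both routes reduce the claim to an identity among the symmetric functions of the $\rho_k$ encoded in $f_n$. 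The positivity and integrality of $P$ already established remove all sign and unit ambiguities, pinning down $P=\Delta_n^2$ rather than merely $P^2=\Delta_n^4$ up to units.

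Granting $P=\Delta_n^2$, the discriminant identity becomes $(5v-wn^2)^2w^{8}\Delta_n^4=\mathfrak f_{K_n}^4$. Since $\mathfrak f_{K_n}\mid\Delta_n$, the right-hand side divides $\Delta_n^4$, while the left-hand side is a positive integer multiple of $\Delta_n^4$ (it is nonzero because an NIB has nonzero discriminant); hence $(5v-wn^2)^2w^{8}=1$ and $\mathfrak f_{K_n}=\Delta_n$, the latter forcing $\Delta_n$ to be square-free. From $w^{8}=1$ we get $w=\pm1$, and from $(5v-wn^2)^2=1$ we get $5v-wn^2=\pm1$; the congruence $n^2\equiv\left(\frac{n}{5}\right)\pmod5$ (valid since $5\nmid n$) selects the sign and yields $v=w\big(n^2-\left(\frac{n}{5}\right)\big)/5\in\Z$. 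Conversely, for square-free $\Delta_n$ these values give $5v-wn^2=-w\left(\frac{n}{5}\right)$, so $(5v-wn^2)^2w^8=1$ and $\mathrm{disc}(\sigma^i(\alpha))=\Delta_n^4=D_{K_n}$, whence $v+w\rho_n$ generates an NIB. This would establish both the equivalence and the explicit formula.
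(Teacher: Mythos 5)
Your argument is sound in outline and, for the ``only if'' direction, is essentially the paper's own: the proof of Corollary~\ref{cor:NIB-squarefree} likewise reads off $w^8=(5v-wn^2)^2=1$ and the square-freeness of $\Delta_n$ from the identity $d(v+w\rho,\dots,v+w\rho^{(4)})=w^8(5v-wn^2)^2\Delta_n^4$ together with \eqref{eq:disc} and \eqref{eq:cond}. Where you genuinely diverge is the existence direction: the paper obtains the generator $\left(n^2-\left(\frac{n}{5}\right)\right)/5+\rho$ by specializing its main construction (Theorem~\ref{theo:main} with $b=c=d=e=1$, $\beta_0=1$, $\beta_1=\beta_2=\beta_3=0$), i.e.\ via the whole Acciaro--Fieker machinery of \S 3, whereas you get it for free from the same discriminant identity plus the index formula $\mathrm{disc}(\text{sublattice})=[\mathcal O_{K_n}:\text{sublattice}]^2 D_{K_n}$. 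Your route is more elementary and self-contained (it is close in spirit to the original Spearman--Williams argument); the paper's route buys nothing extra for this corollary but comes as a byproduct of the general theorem. Your auxiliary observations are all correct: the circulant factorization through Lagrange resolvents, $\sigma(\theta_t)=\zeta^{-t}\theta_t$ and the permutation action of $\Gal(\Q(\zeta)/\Q)$ giving $P\in\Z$, the pairing $\theta_{5-t}=\overline{\theta_t}$ giving $P>0$, the nonvanishing of the $\theta_t$ by linear disjointness (this is exactly Lemma~\ref{lem:NB}), and the sign selection via $n^2\equiv\left(\frac{n}{5}\right)\pmod 5$.

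The one place you stop short is the evaluation $P=\prod_{t=1}^4\theta_t=\Delta_n^2$: you correctly isolate it as the crux and propose two workable computations (a resultant, or matching the $\theta_t$ against the conjugates of $n+2+2\zeta^4+\zeta^2$), but you do not carry either out. To be fair, the paper itself only asserts the equivalent displayed formula as a direct calculation, and the identity is true --- e.g.\ it is forced by $d(\rho,\dots,\rho^{(4)})=n^4P^2$ together with $d(1,\rho,\dots,\rho^4)=\delta_n^2\Delta_n^4$ and the change-of-basis determinant from Lemma~\ref{lem:powers}, whose square is $\Delta_n^4/(n^8\delta_n^2)$; combined with your positivity argument this pins down $P=\Delta_n^2$. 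So there is no wrong step, but a referee would ask you to replace the sketched paragraph by an actual verification along one of the lines you indicate before the proof is complete.
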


We put $\Delta_n= ab^2c^3d^4e^5$ with  $a,b,c,d,e \in \Z_{>0}$, where  $a,b,c,d$ are square-free and pairwise coprime,
and Gal$(K_n/\Q) =\langle \sigma \rangle$ where the generator $\sigma$ satisfies $
\sigma (\rho_n) =(n+2+n\rho_n -\rho_n^2)/(1+(n+2)\rho_n)$
(\cite[(3.2)]{SW}). For $i\in \Z/5\Z$, we put
$\rho_n^{(i)} :=\sigma^i (\rho_n)$.  
In this paper, we prove that for some integers $\beta_0, \beta_1, \beta_2, \beta_3$ which satisfy $b^2c^4d^6e^6 =N_{\Q (\zeta_5)/\Q}
(\beta_0 +\beta_1 \zeta_5 +\beta_2 \zeta_5^2+\beta_3 \zeta_5^3)$,
\begin{align*}
& \frac{1}{bc^2d^3e^4} (\beta_0 \rho_n+\beta_1 \rho_n^{(1)} +\beta_2 \rho_n^{(2)} +\beta_3 \rho_n^{(3)} -m), \\
\end{align*}
is a generator of an NIB of $K_n$, where 
\[
 m:=\frac{1}{5} \left( \left( \frac{n}{5} \right) bc^2d^3e^4-n^2(\beta_0+\beta_1+\beta_2+\beta_3) \right)
\] and the integers $\beta_0, \beta_1, \beta_2, \beta_3$ are
given explicitly (see Theorem~\ref{theo:main}).
For example, when $\Delta_n$ is square-free, then  these integers are given by $\beta_0=1,\, \beta_1=\beta_2=\beta_3=0$ (Corollary~\ref{cor:NIB-squarefree}),
and using $b=c=d=e=1$,  we obtain Spearman and Williams' result (Theorem~\ref{theo:SW}).
The smallest positive integer $n$ such that $\Delta_n$ is not  square-free is $n=14$, and  $\Delta_{14}=11\cdot 71^2$. In this case, 
we have $a=11, b=71, c=d=e=1$. The integers are given by $\beta_0=6,\, \beta_1=7,\, \beta_2=8,\, \beta_3=10$ and  $m=-1201$, it follows that
$(6\rho_n +7\rho_n^{(1)}+8\rho_n^{(2)} +10\rho_n^{(3)}+1201)/71$ is  a generator of an NIB of $K_n$ for $n=14$ (Example~\ref{ex:1to1000}).
For the proof of the main theorem (Theorem~\ref{theo:main}),  we  use the method of constructing an NIB from a normal basis and an integral basis,
proposed by  Acciaro and Fieker  in \cite{AF}. 
Therefore,  we first obtain an integral basis for tamely ramified extensions $K_n/\Q$ (Theorem~\ref{theo:IB}).
When $\Delta_n$ has no square factor of a prime other than 5,
Ga\'al and Pohst \cite[Lemma~2]{GP} proved 
that $K_n$ has an integral basis of the form $\{  1, \rho, \rho^2, \rho^3, * \}$. 
When $\Delta_n$ is cube-free, 
Eloff, Spearman and Williams \cite[Theorem]{ESW1} proved that 
$K_n$ has an integral basis of the form $\{  1, \rho, \rho^2, *, * \}$. 
We will give an integral basis for all $n$ with $5\nmid n$.

Furthermore, by acting the elements of $\Z [\mathrm{Gal}(K_n/\Q)]^{\times}$ on the NIB obtained by Theorem~\ref{theo:main},
 we give all the NIBs of $K_n$ (Theorem~\ref{theo:all-NIB}). This is a generalization of the case $ n=-1$ by 
 Davis, Eloff, Spearman and Williams \cite{DES,ESW2}.
Let $K/\mathbb{Q}$ be a finite Galois extension with the Galois group $G$,
and $\mathcal{O}_K$ be the ring of integers of $K$.
The action of the group ring  $\Q[G]$  on $K$ is given by
\[
x\cdot a:=\sum_{\sigma\in G}n_{\sigma}\sigma(a)\ \ 
(x=\sum_{\sigma\in G}n_\sigma\sigma\in\ \Q[G],\ a\in K),
\]
and $K$ (resp. $\mathcal{O}_K$) is a $\Q[G]$
(resp. $\Z[G]$)-module with this action.
There is a one-to-one correspondence between the generators of an NIB of a finite Galois extension $K/\Q$ which has an NIB 
and elements of the
multiplicative group $\mathbb Z[G]^{\times}$.
In fact, if $\alpha$ is a generator of an NIB of $K$, then all  generators of an NIB are  given by $u\cdot \alpha$
for some $u\in \Z[G]^{\times}$
 (\cite[Lemma~2.3]{HA}).
 Let $K_n$ be the cyclic quintic field and 
 $G=\Gal(K_n/\Q)=\langle \sigma \rangle$,  we have  $1-\sigma^2-\sigma^3 \in \mathbb Z[G]^{\times} ,\ (1-\sigma^2-\sigma^3)^{-1}=1-\sigma-\sigma^4,\ \langle 1-\sigma^2-\sigma^3 \rangle \simeq \Z,\, $ and
\begin{equation}\label{eq:ZG}
\Z [G]^{\times} =\langle \pm 1 \rangle \times G \times \langle 1-\sigma^2-\sigma^3 \rangle = \{\pm \sigma^{\ell} (1-\sigma^2-\sigma^3)^k 
\ |\ \ell \in \Z/5  \Z, k\in \Z \},
\end{equation}
 (\cite[p.~2934]{AB}).
 
\begin{rem}\label{re:Shanks}
In the previous paper \cite{HA}, we obtained all NIBs of the cubic cyclic extension $\Q (\rho_n)/\Q$ 
for a root $\rho_n$ of Shanks' cubic polynomial $X^3-nX^2-(n+3)X-1 \, (n\in \Z)$ (namely, the simplest cubic field).
Put $n^2+3n+9= de^2c^3$ with  $d,e,c \in \Z_{>0}$ and $d,e $ being square-free and $(d,e)=1$. 
Let $\sigma$ be the generator of Gal$(\Q (\rho_n)/\Q)$ satisfying $\sigma(\rho_n)=-1/(1+\rho_n)$.
Let  $\rho_n':=\sigma (\rho_n)$ and $\zeta_3$ be a primitive cubic root of unity. 
Then, we proved that for some integers $a_0$ and $a_1$ that satisfy $ec=N_{\Q (\zeta_3)/\Q} (a_0+a_1 \zeta_3) =a_0^2-a_0a_1+a_1^2$,
\[
\alpha :=\frac{1}{ec^2} (a_0 \rho_n+a_1\rho_n'+m)
\]
is a generator of an NIB of $\Q (\rho_n)$, where 
\[
m:=\frac{ (\varepsilon ec^2 -n (a_0+a_1))}{3}, \quad \varepsilon (=\pm 1) := \begin{cases}
\left( \frac{n(a_0+a_1)}{3} \right), & \text{if $3\nmid n$}, \\
\left( \frac{a_0}{3} \right), & \text{if $n\equiv 12 \pmod{27}$}.
\end{cases}
\]
Furthermore, all generators of NIBs are given by $\{ \pm \sigma^{\ell}\cdot \alpha \ |\ \ell \in \Z/3\Z \}$.
We can choose  integers $a_0,a_1$ such that  $ec=a_0^2-a_0a_1+a_1^2$, and $a_0+a_1 \zeta_3 $ divides $ n+3(1+\zeta_3)$
in $\Z [\zeta_3]$, and find them  as follows.  First, we can write $n^2+3n+9=(n+3(1+\zeta_3))(n+3(1+\zeta_3^2))$ and $ec=3^j p_1
\cdots p_k$ with $j=\begin{cases}
0 , & \text{ if } 3\nmid n,\\
1, & \text{  if } n\equiv 12 \pmod{27}
\end{cases}$ and $p_1,\ldots,p_k$ are prime numbers (not necessarily different) with $p_1 \equiv \cdots \equiv p_k \equiv 1\pmod{3}$.
Let $\pi_i$ and $\pi_i'$ be prime elements of $\mathbb Z [\zeta_3 ]$ with $p_i=\pi_i \pi_i'$ and
$\pi_i \, |\, (n+3(1+\zeta_3)),\ \pi_i' \, |\, (n+3(1+\zeta_3^2))$ in $\mathbb Z [\zeta_3]$.
Then the integers $a_0$ and $a_1$ are given by $(1-\zeta_3)^j \pi_1 \cdots \pi_k=a_0+a_1 \zeta_3$
(note that only the  prime ideal containing both $n+3(1+\zeta_3)$ and $n+3(1+\zeta_3^2)$ is the prime ideal $(1-\zeta_3)$ above $3$).
\end{rem}
\section{Integral bases}\label{sec:IB}

In this section, we consider an  integral basis of $K_n$. Let $\rho :=\rho_n$ be a root of 
the quintic polynomial $f_n(X)$ of (\ref{eq:f_n}). Ga\'al and Pohst \cite[Lemma~2]{GP} have shown 
that if $p^2$ does not divide $\Delta_n$ for any prime number $p$ different from $5$, then 
$K_n$ has an integral basis of the form $\{  1, \rho, \rho^2, \rho^3, * \}$. 
Eloff, Spearman and Williams \cite[Theorem]{ESW1} have shown that if $\Delta_n$ is cube-free, then 
$K_n$ has an integral basis of the form $\{  1, \rho, \rho^2, *, * \}$. 
In this section, we give an integral basis of $K_n$ in the case that $K_n/\Q$ is tamely ramified (that is, $5\nmid n$),
which we use to give an NIB in \S~\ref{sec:NIB}. We assume that $5 \nmid n$, and hence,
we have $5 \nmid \Delta_n$.
We write $\Delta_n= ab^2c^3d^4e^5$ with  $a,b,c,d,e \in \Z_{>0}$, where $a,b,c,d$ are square-free and pairwise coprime.
From (\ref{eq:cond}), the conductor of $K_n$ is given by $\mathfrak f_{K_n} =abcd$.
\begin{theo}\label{theo:IB}  
Assume   that $5\nmid n$.
Let $u$ and $t$ be integers satisfying $5u \equiv 1 \pmod{\Delta_n}$ and 
\begin{align}
t\equiv  & (n^2+3n+4)(11n^5+110n^4+440n^3+903n^2+940n+390)\Delta_n  \notag \\
&  -un^2(11n^3+55n^2+110n+199)\delta_n^2 \pmod{\Delta_n \delta_n^2}, \label{eq:t}
\end{align}
and define an algebraic integer $T$ by 
\begin{align}
T  :=& \dfrac{f_n (\rho)-f_n (t)}{\rho-t} \notag\\
=&(t^4+t^3 \rho +t^2 \rho^2 +t \rho^3+\rho^4) +n^2 (t^3 +t^2 \rho +t \rho^2+\rho^3)  \notag \\
& -(2n^3+6n^2+10n+10)(t^2+t\rho +\rho^2) +(n^4+5n^3+11n^2+15n+5)(t+\rho) \label{eq:T} \\
& +(n^3+4n^2+10n+10). \notag 
\end{align}
Put
\[
\phi_1  := \frac{1}{e} (\rho -t), \quad 
\phi_2  := \frac{1}{cde^2} (\rho -t)^2, \quad 
\phi_3  :=\frac{1}{bcd^2e^3} (\rho -t)^3, \quad 
\phi_4  := \frac{1}{bc^2d^3e^4 \delta_n } T.
\]
Then $\{1,\phi_1,\phi_2,\phi_3,\phi_4\}$ is an integral basis of $K_n =\Q (\rho_n)$.
\end{theo}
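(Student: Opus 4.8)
The plan is to reduce the whole statement to checking that each $\phi_i$ is an algebraic integer, and to handle the rest by an index computation. From the data recorded before the theorem, $d(f_n)=\delta_n^2\Delta_n^4$, $D_{K_n}=\mathfrak{f}_{K_n}^4=(abcd)^4$ and $\Delta_n=ab^2c^3d^4e^5$, the relation $d(f_n)=[\mathcal{O}_{K_n}:\Z[\rho]]^2D_{K_n}$ forces
\[
[\mathcal{O}_{K_n}:\Z[\rho]]=\delta_n\,b^2c^4d^6e^{10}.
\]
On the other hand, set $M:=\Z+\Z\phi_1+\Z\phi_2+\Z\phi_3+\Z\phi_4$. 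Each $\phi_i$ equals its leading coefficient times $\rho^i$ plus an integral combination of $1,\rho,\dots,\rho^{i-1}$, so solving triangularly gives $1,\rho,\rho^2,\rho^3,\rho^4\in M$, hence $\Z[\rho]\subseteq M$, and the change-of-basis matrix from $\{1,\rho,\rho^2,\rho^3,\rho^4\}$ to $\{1,\phi_1,\phi_2,\phi_3,\phi_4\}$ is triangular with diagonal $1,\tfrac1e,\tfrac1{cde^2},\tfrac1{bcd^2e^3},\tfrac1{bc^2d^3e^4\delta_n}$. Thus $[M:\Z[\rho]]=\delta_n b^2c^4d^6e^{10}=[\mathcal{O}_{K_n}:\Z[\rho]]$. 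Consequently, once I show $M\subseteq\mathcal{O}_{K_n}$, the equality of the two indices together with $\Z[\rho]\subseteq M\subseteq\mathcal{O}_{K_n}$ forces $M=\mathcal{O}_{K_n}$, so $\{1,\phi_1,\phi_2,\phi_3,\phi_4\}$ is an integral basis.

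The integrality is driven by the single relation $(\rho-t)T=-f_n(t)$, immediate from $f_n(\rho)=0$ and the definition \eqref{eq:T} of $T$; taking norms gives $N_{K_n/\Q}(\rho-t)=-f_n(t)$. The heart of the argument is the divisibility claim
\[
\Delta_n\,\delta_n^2\mid f_n(t)
\]
for the integer $t$ specified by \eqref{eq:t}. Since $(\Delta_n,\delta_n)=1$, by the Chinese remainder theorem this is equivalent to $\Delta_n\mid f_n(t)$ and $\delta_n^2\mid f_n(t)$, and both are polynomial congruences in $n$ that I would verify by reducing $t$ modulo $\Delta_n$ and modulo $\delta_n^2$, substituting into $f_n$, and simplifying using $5u\equiv1\pmod{\Delta_n}$ and the B\'ezout relation $(n^3+5n^2+10n+18)\delta_n-(n^2+5n+5)\Delta_n=1$. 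I expect this to be the main obstacle: it is precisely where the exact shape of the congruence \eqref{eq:t} is dictated, and it amounts to a long but mechanical elimination of the auxiliary quantities $u,\delta_n,\Delta_n$.

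Granting $\Delta_n\delta_n^2\mid f_n(t)$, I would establish $\phi_i\in\mathcal{O}_{K_n}$ prime by prime, exploiting that the local structure is forced by $d(f_n)=\delta_n^2\Delta_n^4$. For a prime $p\mid bcd$ the extension is totally (tamely) ramified, $p=\mathfrak{p}^5$; the divisibility gives $p\mid f_n(t)$, so $t$ reduces to the unique root $t_0$ of $f_n$ modulo $p$. Because $\sigma$ fixes $\mathfrak{p}$, the conjugates $\rho^{(i)}-t_0$ all have the same valuation $s$, whence $v_p(d(f_n))=4s$; comparing with $v_p(d(f_n))=4v_p(\Delta_n)$ yields $v_{\mathfrak{p}}(\rho-t)=s=v_p(\Delta_n)$ and $v_{\mathfrak{p}}(T)=4v_p(\Delta_n)$. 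Here $\phi_1$ is integral at $p$ trivially, and a short check of the inequalities coming from the denominators $cde^2,\ bcd^2e^3,\ bc^2d^3e^4$ (for $v_p(\Delta_n)\in\{2,3,4\}$) shows $\phi_2,\phi_3,\phi_4$ are integral at $p$ as well.

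For the remaining primes $p\mid e\delta_n$ the extension is unramified, and in fact split: an inert prime would make $f_n$ separable modulo $p$, contradicting $p\mid d(f_n)$. Writing $p\mathcal{O}_{K_n}=\mathfrak{p}_0\cdots\mathfrak{p}_4$ with $\mathfrak{p}_j=\sigma^j\mathfrak{p}_0$ and using $v_{\mathfrak{p}_j}(x)=v_{\mathfrak{p}_0}(\sigma^{-j}x)$, the value $v_p(d(f_n))$ pins down how the roots of $f_n$ cluster around $t$: for $p\mid e$ all five roots lie within $v_p(e)$ of $t$, so $v_{\mathfrak{p}_j}(\rho-t)\ge v_p(e)$ for every $j$, giving the integrality of $\phi_1,\phi_2,\phi_3$ (and trivially $\phi_4$) at $p$; for $p\mid\delta_n$ the double-root structure forced by $v_p(d(f_n))=2v_p(\delta_n)$, combined with $\delta_n^2\mid f_n(t)$, yields $v_{\mathfrak{p}_j}(T)\ge v_p(\delta_n)$ for every $j$, which is exactly what $\phi_4$ requires. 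This gives $M\subseteq\mathcal{O}_{K_n}$ and, with the index computation of the first paragraph, completes the proof. The delicate point in this last step is the split case, where the sum of the conjugate valuations is not by itself enough and one must control each $v_{\mathfrak{p}_j}(\rho-t)$ separately through the clustering of the roots.
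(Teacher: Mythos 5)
Your global reduction (the triangular index computation showing $[M:\Z[\rho]]=\delta_n b^2c^4d^6e^{10}=[\mathcal O_{K_n}:\Z[\rho]]$, so that only $M\subseteq\mathcal O_{K_n}$ remains) is sound and is the same step the paper performs by computing $d(1,\phi_1,\dots,\phi_4)=D_{K_n}$. Your treatment of the ramified primes $p\mid bcd$ is also essentially correct, and in fact a little leaner than the paper's: since $\sigma$ fixes the unique prime $\mathfrak p$ above $p$ and $t\in\Z$, all conjugates of $\rho-t$ have equal $\mathfrak p$-valuation, so $\Delta_n\mid f_n(t)=\pm N_{K_n/\Q}(\rho-t)$ already yields $v_{\mathfrak p}(\rho-t)\ge v_p(\Delta_n)$ and $v_{\mathfrak p}(T)\ge 4v_p(\Delta_n)$, whereas the paper's route through the monic polynomial satisfied by $(\rho-t)/(\rho-\rho^{(2)})$ needs the full list of congruences $f_n^{(k)}(t)/k!\equiv 0\pmod{\Delta_n}$.

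The genuine gap is at the primes $p\mid\delta_n$. You compress the role of \eqref{eq:t} into the single divisibility $\Delta_n\delta_n^2\mid f_n(t)$, but that divisibility does not imply $v_{\mathfrak q}(T)\ge v_p(\delta_n)$ at every prime $\mathfrak q$ above $p\mid\delta_n$. Writing $T=\prod_{i\ne 0}(t-\rho^{(i)})$, the required bound fails precisely when, at some $\mathfrak q$, the only conjugate $\mathfrak q$-adically close to $t$ is $\rho$ itself: for instance, if $f_n\bmod p$ has a simple root $r_0$ in addition to its repeated root, then an integer $t$ congruent to the Hensel lift of $r_0$ to high $p$-adic order satisfies $v_p(f_n(t))\ge 2v_p(\delta_n)$, yet $v_{\mathfrak q}(T)=0$ at the prime $\mathfrak q$ for which $\rho$ reduces to $r_0$, and $\phi_4$ is not integral there. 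So ``$\Delta_n\delta_n^2\mid f_n(t)$'' is strictly weaker than what \eqref{eq:t} encodes, and no argument using only that hypothesis can close the $\phi_4$ step. The missing ingredient is the derivative condition $\delta_n\mid f_n'(t)$: by Lemma~\ref{lem:t-mod} the congruence \eqref{eq:t} forces $t\equiv-(n^2+3n+4)\pmod{\delta_n^2}$, and this residue is simultaneously a root of $f_n$ modulo $\delta_n^2$ and of $f_n'$ modulo $\delta_n$ (the paper's \eqref{eq:delta-delta}). It is $\delta_n\mid f_n'(t)$ that pins $t$ to the multiple root of $f_n\bmod p$ and excludes the bad clustering configurations (e.g.\ two disjoint colliding pairs when $v_p(\delta_n)\ge 2$); the paper feeds it into the degree-one coefficient $-f_n'(t)/((\rho-\rho^{(2)})^4\delta_n)$ of the monic polynomial \eqref{eq:xi-poly} satisfied by $T/((\rho-\rho^{(2)})^4\delta_n)$, which is integral only because $f_n'(t)\in(\Delta_n\delta_n)$. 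You would need to state and verify this extra congruence and then redo the $p\mid\delta_n$ analysis with it. (A minor further point: your inference that an unramified prime dividing $d(f_n)$ must split completely because ``an inert prime would make $f_n$ separable'' is not valid --- an inert prime may divide the index $[\mathcal O_{K_n}:\Z[\rho]]$ --- but this is harmless, as the inert case is handled the same way with $v_p(N(x))=5v_{\mathfrak q}(x)$.)
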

We prove some lemmas for the proof of  Theorem~\ref{theo:IB}. Fix an integer $n$ and put $\rho =\rho_n$.
We can write  Gal$(K_n/\Q) =\langle \sigma \rangle$ where the generator $\sigma$ satisfies
\[
\sigma (\rho) =\frac{n+2+n\rho -\rho^2}{1+(n+2)\rho} 
\]
(\cite[(3.2)]{SW}). For any $x\in K_n$ and $i\in \Z/5\Z$, we write
$x^{(i)} =\sigma^i (x)$.
In what follows in this paper, we use the explicit  expression obtained by Spearman and Williams
\cite[Proposition]{SpWi}
for the conjugates of $\rho$ ($\rho^{(i)}$ corresponds to $y_i$  in \cite[Proposition]{SpWi}).

By direct calculation, we obtain  Lemmas~\ref{lem:powers}, \ref{lem:half-D} and \ref{lem:t-mod}).
\begin{lem}\label{lem:powers}
\begin{align*}
\rho^2 =& -\frac{1}{n^2} \left\{ ( 4+4 n+3 n^2+2 n^3+n^4)\rho+
2 (2+2 n+n^2) \rho^{(1)}+(2+n) (2+n+n^2) \rho^{(2)}  \right. \\
 & \left. +(2+n)^2 \rho^{(3)} +(2+n) (2+n+n^2) \rho^{(4)} \right\} ,\\
\rho^3 =&  -\frac{1}{n^2} \left\{  
-(3+3 n+n^2) (1+2 n+4 n^2+n^3+n^4)  \rho
-(1+n) (3+6 n+4 n^2+2 n^3) \rho^{(1)} \right. \\
& -(1+n)^2 (3+3 n+2 n^2+n^3) \rho^{(2)} 
-(1+n)^2 (3+3 n+n^2) \rho^{(3)} \\
& \left. - (3+9 n+13 n^2+9 n^3+4 n^4+n^5)  \rho^{(4)} \right\}, \\
\rho^4 =&  -\frac{1}{n^2} \left\{  
(32+72 n+99 n^2+105 n^3+86 n^4 +50 n^5+21 n^6+6 n^7+n^8 )\rho \right. \\
&\, +(32+72 n+83 n^2+60 n^3+30 n^4 +10 n^5+2 n^6 )\rho^{(1)} \\
&\, +(32+72 n+94 n^2+80 n^3+47 n^4 +20 n^5+6 n^6+n^7 ) \rho^{(2)} \\
&\, +(32+72 n+75 n^2+51 n^3+24 n^4  +7 n^5+n^6 ) \rho^{(3)} \\
&\left. +(32+72 n+93 n^2+80 n^3+49 n^4 +21 n^5+6 n^6+n^7 ) \rho^{(4)} \right\} .
\end{align*}
\end{lem}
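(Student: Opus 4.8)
The plan is to reduce the whole statement to linear algebra over $\Q$, played out between the two natural bases of $K_n$: the power basis $\{1,\rho,\rho^2,\rho^3,\rho^4\}$ and the conjugate system $\{\rho^{(0)},\rho^{(1)},\rho^{(2)},\rho^{(3)},\rho^{(4)}\}$ with $\rho^{(0)}=\rho$. The three displayed identities are exactly three rows of the change-of-basis matrix between these two systems, so the first task is to make each conjugate explicit as a polynomial in $\rho$, and the rest is inversion and extraction.

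First I would express each $\rho^{(i)}=\sigma^i(\rho)$ as a polynomial in $\rho$ of degree at most $4$. Starting from $\sigma(\rho)=(n+2+n\rho-\rho^2)/(1+(n+2)\rho)$, I would clear the denominator by inverting $1+(n+2)\rho$ in $K_n=\Q[\rho]$: since $f_n$ is irreducible of degree $5$ it has no rational root, so $1+(n+2)X$ and $f_n(X)$ are coprime, and the extended Euclidean algorithm produces $g(X)\in\Q[X]$ with $(1+(n+2)X)g(X)\equiv 1 \pmod{f_n(X)}$. Then $\sigma(\rho)\equiv(n+2+n\rho-\rho^2)g(\rho)$, reduced modulo $f_n$; iterating $\sigma$ three more times gives $\rho^{(2)},\rho^{(3)},\rho^{(4)}$ in the same form. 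These are precisely the expressions of Spearman and Williams \cite[Proposition]{SpWi} recorded in the footnote, so in practice I would simply invoke them rather than rederive them.

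These expressions assemble into a matrix $M\in M_5(\Q)$ satisfying $(\rho^{(0)},\dots,\rho^{(4)})^{\mathrm{t}}=M\,(1,\rho,\rho^2,\rho^3,\rho^4)^{\mathrm{t}}$, whose first row is $(0,1,0,0,0)$. Because both systems are $\Q$-bases of $K_n$, the matrix $M$ is invertible, and solving for $\rho^2,\rho^3,\rho^4$ (that is, reading off the corresponding rows of $M^{-1}$) yields the three claimed identities. As an internal consistency check, the row of $M^{-1}$ associated to $1$ must reproduce the trace relation $1=-\frac{1}{n^2}\sum_{i}\rho^{(i)}$, since $\sum_i\rho^{(i)}=-n^2$. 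The uniform denominator $n^2$ in the statement reflects the determinant of $M$, which is a nonzero rational constant times $n^2$; thus $M$ is singular only at $n=0$, and the formulas (equivalently, the fact that $\rho$ generates a normal basis) are valid for every $n\neq 0$, in particular under the running hypothesis $5\nmid n$.

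The only genuine obstacle is the size of the computation: inverting a $5\times 5$ matrix with entries polynomial in $n$ generates bulky intermediate expressions, and the coefficients in the statement (degree up to $8$ in $n$) emerge only after reduction modulo $f_n$. To keep this manageable I would avoid a full symbolic inversion and instead use the equivalent, more robust verification: substitute the Spearman and Williams polynomial expressions for the $\rho^{(i)}$ into the right-hand side of each identity and confirm that the linear combination collapses to $\rho^2$, $\rho^3$, and $\rho^4$ respectively after using $f_n(\rho)=0$. This turns the lemma into a single finite polynomial identity in $n$, which is exactly the ``direct calculation'' the statement refers to.
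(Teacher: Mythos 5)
Your proposal is correct and matches the paper's route: the paper proves this lemma ``by direct calculation'' using exactly the Spearman--Williams polynomial expressions for the conjugates $\rho^{(i)}$ cited in the footnote, which is the substitution-and-reduce-mod-$f_n$ verification you describe in your final paragraph. The linear-algebra framing (inverting the change-of-basis matrix $M$) is just a packaging of that same computation, and your consistency check via the trace relation $\sum_i\rho^{(i)}=-n^2$ is a sensible addition.
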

\begin{lem}\label{lem:half-D}
\begin{enumerate}
\item[(1)] $\displaystyle{ \prod_{i=0}^4 (\rho^{(i)} -\rho^{(i+1)} )=\delta_n \Delta_n}$.
\item[(2)] $\displaystyle{ \prod_{i=0}^4 (\rho^{(i)} -\rho^{(i+2)} )=-\Delta_n }$.
\end{enumerate}
\end{lem}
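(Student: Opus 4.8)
The plan is to recognize each of the two products as a norm down to $\Q$. Since $\rho^{(i)}=\sigma^{i}(\rho)$ and $\Gal(K_n/\Q)=\langle\sigma\rangle$ is cyclic of order $5$, for each $k$ we have
\[
\prod_{i=0}^4\bigl(\rho^{(i)}-\rho^{(i+k)}\bigr)=\prod_{i=0}^4\sigma^{i}\bigl(\rho-\sigma^{k}(\rho)\bigr)=N_{K_n/\Q}\bigl(\rho-\sigma^{k}(\rho)\bigr),
\]
so that (1) asserts $N_{K_n/\Q}(\rho-\sigma(\rho))=\delta_n\Delta_n$ and (2) asserts $N_{K_n/\Q}(\rho-\sigma^{2}(\rho))=-\Delta_n$. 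Writing $P_k:=\prod_{i=0}^4(\rho^{(i)}-\rho^{(i+k)})$, I would first record the elementary antisymmetry $P_3=-P_2$ and $P_4=-P_1$, obtained by reindexing $i\mapsto i-2$ (resp.\ $i\mapsto i-1$) in $\Z/5\Z$ and extracting the sign from each of the five factors.

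Next I would exploit the given discriminant $d(f_n)=\delta_n^{2}\Delta_n^{4}$. As $(i,i+k)$ with $1\le k\le 4$ and $0\le i\le 4$ runs exactly once over all ordered pairs $(i,j)$ with $i\ne j$, we get $P_1P_2P_3P_4=\prod_{i\ne j}(\rho^{(i)}-\rho^{(j)})=d(f_n)=\delta_n^{2}\Delta_n^{4}$; combined with the antisymmetry this yields $(P_1P_2)^2=\delta_n^{2}\Delta_n^{4}$, i.e.\ $P_1P_2=\pm\,\delta_n\Delta_n^{2}$. This is consistent with the target values $\delta_n\Delta_n$ and $-\Delta_n$ (whose product is $-\delta_n\Delta_n^{2}$), and—more usefully—it shows that it suffices to pin down just one of $P_1,P_2$ directly and then divide.

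To compute $P_2=N_{K_n/\Q}(\rho-\sigma^{2}(\rho))$ I would use the explicit generator $\sigma(\rho)=(n+2+n\rho-\rho^{2})/(1+(n+2)\rho)$, substituting it into itself and clearing denominators to write $\rho-\sigma^{2}(\rho)=g(\rho)/h(\rho)$ with $g,h\in\Z[n][X]$; the norm is then the ratio $\prod_i g(\rho^{(i)})/\prod_i h(\rho^{(i)})$, each factor being a resultant with $f_n$ and hence an explicit polynomial in $n$ (for example, the norm of a linear form $1+c\rho$ is $-c^{5}f_n(-1/c)$). Alternatively, and probably more cleanly, I would substitute the explicit Spearman–Williams expressions for $\rho^{(0)},\dots,\rho^{(4)}$ and multiply out, reducing powers of $\rho$ via Lemma~\ref{lem:powers}. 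Either route should give $P_2=-\Delta_n$, with the sign fixed once and for all by evaluating at one convenient value (at $n=0$ one has $\Delta_0=25$, $\delta_0=7$, and the expected $P_2=-25$, $P_1=175$, for which $(P_1P_2)^2=19140625=\delta_0^{2}\Delta_0^{4}$). Then $P_1=(P_1P_2)/P_2=\delta_n\Delta_n$; as a check I would also compute $P_1=N_{K_n/\Q}(\rho-\sigma(\rho))$ directly from $\rho-\sigma(\rho)=((n+3)\rho^{2}+(1-n)\rho-(n+2))/(1+(n+2)\rho)$.

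The main obstacle is purely computational: pushing the resultant/norm simplifications through in $\Z[n]$ far enough to recognize the factors $\delta_n$ and $\Delta_n$, and in particular checking that the denominators $\prod_i h(\rho^{(i)})$ divide the numerators so that the norms come out as the claimed integers rather than proper fractions. Because every quantity here is, for the fixed uniform choice of $\sigma$, a single polynomial in $n$, the assertions (1) and (2) are polynomial identities in $n$; once degree bounds on both sides are in hand they may be confirmed either by direct symbolic expansion or by evaluation at sufficiently many integers, with the relation $(P_1P_2)^2=\delta_n^{2}\Delta_n^{4}$ serving throughout as a running consistency check on sign and magnitude.
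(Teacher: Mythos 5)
Your proposal is correct and, at its core, is the same argument the paper uses: Lemma~\ref{lem:half-D} is obtained ``by direct calculation'' from the explicit Spearman--Williams expressions for the conjugates $\rho^{(i)}$, which is exactly the computation you defer to in your final step. Your added observations --- the norm interpretation, the reindexing identities $P_3=-P_2$, $P_4=-P_1$, and the factorization $P_1P_2P_3P_4=d(f_n)=\delta_n^2\Delta_n^4$ --- are sound and pleasantly halve the symbolic work (only one of the two products need be expanded, the other following up to a sign fixed by a single evaluation), but they do not replace the direct computation, so the approach is essentially that of the paper.
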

\begin{lem}\label{lem:t-mod}
Assume that $5\nmid n$.
Let $u$ and $t$  be the integers in Theorem~\ref{theo:IB}. We have
$t\equiv -un^2 \pmod{\Delta_n}$ and $t\equiv -(n^2+3n+4) \pmod{\delta_n^2}$.
\end{lem}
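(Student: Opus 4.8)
The plan is to treat the two congruences separately, exploiting that $\gcd(\Delta_n,\delta_n^2)=1$ (which follows from the recorded fact $(\delta_n,\Delta_n)=1$), so that the single defining congruence (\ref{eq:t}) for $t$ modulo $\Delta_n\delta_n^2$ can be reduced modulo each factor. The right-hand side of (\ref{eq:t}) is assembled from two summands, one carrying a factor $\Delta_n$ and the other a factor $\delta_n^2$; reducing modulo $\Delta_n$ annihilates the first summand, and reducing modulo $\delta_n^2$ annihilates the second. Thus the whole statement amounts to identifying the two surviving factors as the appropriate modular inverses, and one sees that (\ref{eq:t}) is exactly the Chinese Remainder reconstruction of the two target residues $-un^2$ and $-(n^2+3n+4)$.

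First I would handle the congruence modulo $\Delta_n$. After discarding the first summand, it remains to show $(11n^3+55n^2+110n+199)\,\delta_n^2\equiv 1\pmod{\Delta_n}$, for then $t\equiv -un^2(11n^3+55n^2+110n+199)\delta_n^2\equiv -un^2\pmod{\Delta_n}$. The B\'ezout relation $(n^3+5n^2+10n+18)\delta_n-(n^2+5n+5)\Delta_n=1$ recorded in \S~\ref{sect:intro} gives immediately $\delta_n^{-1}\equiv n^3+5n^2+10n+18\pmod{\Delta_n}$, so it suffices to verify the polynomial congruence $(n^3+5n^2+10n+18)^2\equiv 11n^3+55n^2+110n+199\pmod{\Delta_n}$. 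This is a short computation: expand the square and reduce the resulting degree-$6$ polynomial using $n^4\equiv-(5n^3+15n^2+25n+25)\pmod{\Delta_n}$.

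Next I would treat the congruence modulo $\delta_n^2$. Discarding the second summand of (\ref{eq:t}), the claim $t\equiv-(n^2+3n+4)\pmod{\delta_n^2}$ reduces to the single identity $(11n^5+110n^4+440n^3+903n^2+940n+390)\,\Delta_n\equiv -1\pmod{\delta_n^2}$, that is, to the divisibility of the degree-$9$ polynomial $(11n^5+110n^4+440n^3+903n^2+940n+390)\Delta_n+1$ by $\delta_n^2$ in $\Z[n]$. Equivalently, the explicit quintic is a lift of $\Delta_n^{-1}$ from $\Z/\delta_n$, where $\Delta_n^{-1}\equiv-(n^2+5n+5)$ by the same B\'ezout relation, to $\Z/\delta_n^2$, obtainable by one step of Hensel-type lifting.

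I expect this last verification to be the main obstacle: unlike the clean inversion modulo $\Delta_n$, reducing modulo $\delta_n^2=(n^3+5n^2+10n+7)^2$ requires a genuine degree-$9$ polynomial division (with cubic quotient of leading coefficient $11$) rather than a one-line appeal to the given B\'ezout identity. Once these two polynomial congruences are in hand, Lemma~\ref{lem:t-mod} follows by assembling the two reductions.
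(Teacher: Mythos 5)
Your proposal is correct and matches the paper, which simply asserts Lemma~\ref{lem:t-mod} ``by direct calculation'': your CRT decomposition together with the B\'ezout relation is exactly that calculation made explicit, and both polynomial identities you isolate do hold, namely $(n^3+5n^2+10n+18)^2\equiv 11n^3+55n^2+110n+199\pmod{\Delta_n}$ and $(11n^5+110n^4+440n^3+903n^2+940n+390)\Delta_n+1=(11n^3+55n^2+110n+199)\,\delta_n^2$ exactly in $\Z[n]$. The degree-$9$ division you worried about is therefore not an obstacle; its quotient is the same cubic $11n^3+55n^2+110n+199$ appearing in the other summand, which reflects the CRT structure you identified.
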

We put
\begin{equation}\label{eq:abcd-prod}
ab^2c^3d^4=\prod_{k=1}^{\ell }p_k^{m_k}, \quad \Delta_n =e^5 \prod_{k=1}^{\ell }p_k^{m_k}
\end{equation}
with $p_1,\ldots,p_{\ell}$ being distinct prime numbers and $m_k \in \{1,2,3,4 \}$.
For each $ k$, let $\mathfrak p_k$ be a prime ideal of $K_n$ dividing $p_k$. Since $p_k$ is ramified in $K_n$, we have $p_k \mathcal O_{K_n} =(p_k)=\mathfrak p_k^5$.
Let $v_{\mathfrak p}(x)$ be the $\mathfrak p$-adic valuation of $x \in K_n$ for a prime ideal $\mathfrak p$.
\begin{lem}\label{lem:r-r}
For any $i\in \Z/5\Z$, we have $(\rho^{(i)} -\rho^{(i+2)} )=e \prod_{k=1}^{\ell }\mathfrak p _k^{m_k}$, and hence
$(\rho^{(i)}-\rho^{(i+2)})/(\rho^{(j)}-\rho^{(j+2)}) \in \mathcal O_{K_n}^{\times} $ for any $i,j\in \Z/5\Z$.
\end{lem}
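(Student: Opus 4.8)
The plan is to reduce the whole statement to the single assertion that the ideals $(\rho^{(i)}-\rho^{(i+2)})$ all coincide, and then to extract the factorization by a fifth-root argument. Write $\theta_i:=\rho^{(i)}-\rho^{(i+2)}$ for $i\in\Z/5\Z$, so that $\theta_i=\sigma^i(\theta_0)$ and the ideals $(\theta_i)=\sigma^i((\theta_0))$ are Galois conjugate. First I would record the norm: combining Lemma~\ref{lem:half-D}(2) with $\sigma^j(\theta_i)=\theta_{i+j}$ gives
\[
N_{K_n/\Q}(\theta_i)=\prod_{j=0}^4\sigma^j(\theta_i)=\prod_{j=0}^4(\rho^{(i+j)}-\rho^{(i+j+2)})=\prod_{k=0}^4(\rho^{(k)}-\rho^{(k+2)})=-\Delta_n ,
\]
so each $\theta_i$ is a nonzero algebraic integer and $(\theta_i)$ is supported only on primes dividing $\Delta_n$.

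The backbone of the argument is the claim that $\theta_i/\theta_0\in\mathcal O_{K_n}^{\times}$ for every $i$, equivalently that $(\theta_0)$ is Galois stable, i.e. all the $(\theta_i)$ are equal. Granting this, set $\mathfrak a:=(\theta_0)=\cdots=(\theta_4)$ and $\mathfrak b:=(e)\prod_{k=1}^{\ell}\mathfrak p_k^{m_k}$. Then $\mathfrak a^5=\prod_{i=0}^4(\theta_i)=(\Delta_n)$, while using $p_k\mathcal O_{K_n}=\mathfrak p_k^5$ together with (\ref{eq:abcd-prod}) one gets $\mathfrak b^5=(e^5)\prod_k(p_k)^{m_k}=(e^5\cdot ab^2c^3d^4)=(\Delta_n)$. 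Since $\mathfrak a^5=\mathfrak b^5$ and ideal factorization is unique, $\mathfrak a=\mathfrak b$, which is precisely the asserted identity for all $i$; the ``hence'' part then follows immediately, since equal ideals have generators differing by a unit. This way the entire content of the lemma is pushed into showing that the ratios $\theta_i/\theta_0$ are units.

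To prove $\theta_i/\theta_0\in\mathcal O_{K_n}^{\times}$ I would argue prime by prime. Its norm is $N(\theta_i)/N(\theta_0)=1$, so it suffices to check that it has valuation $0$ everywhere. At a ramified prime $p_k$, which is totally ramified with unique (hence Galois-fixed) prime $\mathfrak p_k$ of residue degree $1$, the norm formula forces $v_{\mathfrak p_k}(\theta_i)=v_{p_k}(\Delta_n)$ independently of $i$; the same holds at any prime $p\mid e$ that is inert, where the unique prime above $p$ is again Galois-fixed. Thus $\theta_i/\theta_0$ is automatically a unit at all such primes, and also at every prime not dividing $\Delta_n$. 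The one remaining, and main, obstacle is a prime $p\mid e$ with $p\nmid abcd$ that splits completely, say $p\mathcal O_{K_n}=\mathfrak q_0\cdots\mathfrak q_4$ with $\sigma(\mathfrak q_s)=\mathfrak q_{s+1}$: here I must show that $v_{\mathfrak q_s}(\theta_0)$ is independent of $s$, so that each equals $v_p(e)$ by the norm.

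The hard part is therefore exactly this split-prime balancing, and I would handle it with the explicit expressions of Spearman and Williams for the conjugates $\rho^{(i)}$. Passing to the completion at $\mathfrak q_s$, which is $\Q_p$, the Galois generator acts on the $p$-adic roots $r_0,\dots,r_4$ of $f_n$ through the order-five M\"obius map $g(X)=(n+2+nX-X^2)/(1+(n+2)X)$ with $g(r_s)=r_{s-1}$; since every prime dividing $\Delta_n$ is $\equiv 1\pmod 5$, the field $\Q_p$ contains the fifth roots of unity, and diagonalizing $g$ puts $r_s-r_{s-2}$ into a form from which the constancy of $v_p(r_s-r_{s-2})$ in $s$ can be read off. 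Equivalently, the explicit formulas should exhibit $\theta_i/\theta_0$ directly as an algebraic integer, after which its unit norm finishes the claim. I expect this split-prime analysis to be the delicate step, with everything before and after it being routine ideal-theoretic bookkeeping.
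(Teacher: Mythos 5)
Your reduction is sound as far as it goes: the norm identity from Lemma~\ref{lem:half-D}(2), the observation that conjugate elements have equal valuation at any Galois-stable prime (so the ramified primes $p_k$ and any inert primes dividing $e$ are automatic), and the final step $\mathfrak a^5=\mathfrak b^5\Rightarrow\mathfrak a=\mathfrak b$ are all correct, and they match the skeleton of the paper's argument. But you have correctly isolated the crux --- a prime $p\mid e$ with $p\nmid abcd$, which splits completely in $K_n$, where the five valuations $v_{\mathfrak q_s}(\rho^{(i)}-\rho^{(i+2)})$ could a priori be distributed unevenly among the primes $\mathfrak q_s$ --- and then you have not proved it. Saying that after diagonalizing the M\"obius map over $\Q_p$ the constancy of $v_p(r_s-r_{s-2})$ ``can be read off'' is an announcement, not an argument; nothing in what you wrote rules out, say, $v_{\mathfrak q_0}(\theta_0)=2m$ and $v_{\mathfrak q_1}(\theta_0)=0$. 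Since this is the only non-formal content of the lemma, the proposal as it stands has a genuine gap.

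The paper closes exactly this gap with a different and more elementary device: Jeannin's minimal polynomial of $\rho-\rho^{(1)}$,
\[
G_n(X)=X^5-\Delta_nX^3+(n+1)\Delta_nX^2+(n^2+4n+5)\Delta_nX-\delta_n\Delta_n ,
\]
in which every non-leading coefficient is divisible by $\Delta_n$, hence by $p^{5m}$ when $p^m\,\|\,e$. Therefore $p^{-5m}G_n(p^mX)\in\Z[X]$ is monic with root $(\rho-\rho^{(1)})/p^m$, so $(\rho^{(i)}-\rho^{(i+1)})/p^m\in\mathcal O_{K_n}$ for all $i$, and adding two consecutive differences gives $(\rho^{(i)}-\rho^{(i+2)})/p^m\in\mathcal O_{K_n}$. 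This yields $v_{\mathfrak q_s}(\theta_i)\ge m$ for every $s$ and $i$; combined with $\sum_{i}v_{\mathfrak q_s}(\theta_i)=v_{\mathfrak q_s}(\Delta_n)=5m$, all these valuations equal $m$, which is the balancing you needed. If you want to salvage your write-up, replace the sketched $p$-adic diagonalization with this minimal-polynomial step (or actually carry out the $p$-adic computation in full); everything else in your proposal can stay.
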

\begin{proof}\
It follows from Lemma~\ref{lem:half-D} (2) that $v_{{\mathfrak p}_k}(\rho^{(0)}-\rho^{(2)})=v_{{\mathfrak p}_k}(\rho^{(1)}-\rho^{(3)})=
v_{{\mathfrak p}_k}(\rho^{(2)}-\rho^{(4)})= v_{{\mathfrak p}_k}(\rho^{(3)}-\rho^{(0)})=v_{{\mathfrak p}_k}(\rho^{(4)}-\rho^{(1)})
$  for $k=1,\ldots \ell$ because these primes $p_1,\ldots,p_{\ell}$ are ramified in $K_n$, and hence
${\mathfrak p}_k^{m_k} || (\rho^{(i)}-\rho^{(i+1)})$ for any $i\in \mathbb Z/5\mathbb Z$.
Let $p$ be a prime number dividing $e$ but different from  $p_1,\ldots, p_{\ell}$, and $p^m || e \ (m\in \mathbb Z_{>0})$.
The minimal polynomial of $\rho-\rho^{(1)}$ is given by
\[
G_n(X):=X^5-\Delta_n X^3+(n+1)\Delta_n X^2+(n^2+4n+5)\Delta_n X-\delta_n \Delta_n
\]
(see \cite[p.78]{Je}). Let
\begin{align*}
g_n(X) & =p^{-5m} G_n (p^m X) \\
& =X^5-\Delta_n p^{-2m} X^3+(n+1)\Delta_n p^{-3m} X^2+ (n^2+4n+5)\Delta_n p^{-4m} X-\delta_n \Delta_n p^{-5m}.
\end{align*}
Then we have $g_n (X) \in \mathbb Z[X]$ and $g_n ((\rho-\rho^{(1)} )/p^m )=0$, and hence $(\rho-\rho^{(1)})/p^m \in O_{K_n}$.
By acting elements of $\mathrm{Gal} (K_n/\mathbb Q)$, we obtain $(\rho^{(i)}-\rho^{(i+1)})/p^m \in O_{K_n}$ for any
$i\in \mathbb Z/5\mathbb Z$, and $(\rho^{(i)}-\rho^{(i+2)})/p^m =(\rho^{(i)}-\rho^{(i+1)} )/p^m +(\rho^{(i+1)} -\rho^{(i+2)})/p^m
\in O_{K_n}$. 
Furthermore, from Lemma~\ref{lem:half-D} (2) and (\ref{eq:abcd-prod}), we have $p^m || (\rho^{(i)}-\rho^{(i+1)})$. The claim follows from these facts.
\end{proof}
\begin{lem}\label{lem:r-integral}
Assume that $5\nmid n$.
Let $t$  be the integer and $T$ be the element of $\mathcal O_{K_n}$  in Theorem~\ref{theo:IB}.  We have 
\[
\frac{\rho-t}{\rho -\rho^{(2)}}\  \in \mathcal O_{K_n}, \quad \frac{T}{(\rho-\rho^{(2)})^4 \delta_n} \  \in \mathcal O_{K_n}.
\]
\end{lem}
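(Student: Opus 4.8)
\emph{The plan} is to reduce both claims to inequalities between $\mathfrak p$-adic valuations and to run everything off the single factorization $f_n(t)=(t-\rho)\,T$, which is immediate from $f_n(\rho)=0$ and the definition of $T$. Since $f_n(X)=\prod_{i=0}^4(X-\rho^{(i)})$, this reads $f_n(t)=\prod_{i=0}^4(t-\rho^{(i)})$ and $T=\prod_{i=1}^4(t-\rho^{(i)})$, so at every prime $\mathfrak p$ one has $v_{\mathfrak p}(T)=v_{\mathfrak p}(f_n(t))-v_{\mathfrak p}(\rho-t)$. The first assertion is $v_{\mathfrak p}(\rho-t)\ge v_{\mathfrak p}(\rho-\rho^{(2)})$ for all $\mathfrak p$, the second is $v_{\mathfrak p}(T)\ge 4\,v_{\mathfrak p}(\rho-\rho^{(2)})+v_{\mathfrak p}(\delta_n)$. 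By Lemma~\ref{lem:r-r} the divisor of $\rho-\rho^{(2)}$ is $e\prod_k\mathfrak p_k^{m_k}$, supported on primes dividing $\Delta_n$, and $(\delta_n,\Delta_n)=1$; hence it suffices to verify the first inequality at the primes of $\Delta_n$, and the second at the primes of $\Delta_n$ and of $\delta_n$.

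The engine at the primes of $\Delta_n$ is the divisibility $\Delta_n\mid f_n(t)$, which I would prove as a polynomial identity: by Lemma~\ref{lem:t-mod} we have $5t\equiv-n^2\pmod{\Delta_n}$, and reducing $5^5f_n(t)$ modulo $\Delta_n$ gives $5^5f_n(t)\equiv\Delta_n\,(4n^6+30n^5+65n^4-200n^2-125n+125)\equiv0$, so $\gcd(5,\Delta_n)=1$ yields $\Delta_n\mid f_n(t)$. At a ramified prime $\mathfrak p_k\mid p_k\mid abcd$ the generator $\sigma$ fixes $\mathfrak p_k$, so all $v_{\mathfrak p_k}(t-\rho^{(i)})$ agree; therefore $v_{\mathfrak p_k}(\rho-t)=v_{p_k}(f_n(t))\ge v_{p_k}(\Delta_n)=v_{\mathfrak p_k}(\rho-\rho^{(2)})$ and $v_{\mathfrak p_k}(T)=4\,v_{\mathfrak p_k}(\rho-t)$, which settles both inequalities here (note $v_{\mathfrak p_k}(\delta_n)=0$). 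At a prime $\mathfrak p\mid p$ with $p\mid e$ and $p\nmid abcd$, chaining $v_{\mathfrak p}(\rho^{(i)}-\rho^{(i+2)})=v_p(e)$ (Lemma~\ref{lem:r-r}) around the $5$-cycle $i\mapsto i+2$ shows that all $\rho^{(i)}$ are congruent modulo $\mathfrak p^{v_p(e)}$; were $v_{\mathfrak p}(\rho-t)<v_p(e)$, all factors $t-\rho^{(i)}$ would share that valuation and force $v_{\mathfrak p}(f_n(t))=5\,v_{\mathfrak p}(\rho-t)<5v_p(e)=v_p(\Delta_n)$, contradicting $\Delta_n\mid f_n(t)$; hence $v_{\mathfrak p}(t-\rho^{(i)})\ge v_p(e)$ for every $i$, which again gives both inequalities.

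The delicate case, and the step I expect to be the main obstacle, is the primes $\mathfrak q\mid\delta_n$ of the second inequality, where I must show $v_{\mathfrak q}(T)\ge v_{\mathfrak q}(\delta_n)$. Two structural facts come first. By Lemma~\ref{lem:half-D}(2) each $\rho^{(i)}-\rho^{(i+2)}$ is a $\mathfrak q$-unit, so at most two of the factors $t-\rho^{(i)}$ are non-units at $\mathfrak q$; and combining the distance-$1$ product (Lemma~\ref{lem:half-D}(1)) with the distance-$2$ units shows $q$ cannot be inert, hence splits completely in $K_n$. The computational heart is then two congruences for $w:=-(n^2+3n+4)$, namely $f_n(w)\equiv0\pmod{\delta_n^2}$ and $f_n'(w)\equiv0\pmod{\delta_n}$, which through $t\equiv w\pmod{\delta_n^2}$ (Lemma~\ref{lem:t-mod}) yield $\delta_n^2\mid f_n(t)$ and $\delta_n\mid f_n'(t)$; this is exactly what forces the modulus $\delta_n^2$ in Lemma~\ref{lem:t-mod}. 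These congruences say that $\bar t$ is a root of $f_n$ modulo $\mathfrak q$ of multiplicity exactly two (at least two by the congruences, at most two by the unit fact), so Hensel's lemma factors $f_n=g\,h$ over $\mathbb{Z}_q$ with $g$ monic quadratic and $h(t)$ a $\mathfrak q$-unit. If $y_1,y_2$ denote the two factors $t-\rho^{(i)}$ coming from the roots of $g$, then $v_{\mathfrak q}(y_1y_2)=v_{\mathfrak q}(g(t))=v_{\mathfrak q}(f_n(t))\ge 2\,v_{\mathfrak q}(\delta_n)$ and $v_{\mathfrak q}(y_1+y_2)=v_{\mathfrak q}(g'(t))\ge v_{\mathfrak q}(\delta_n)$, and an elementary ultrametric argument forces $v_{\mathfrak q}(y_j)\ge v_{\mathfrak q}(\delta_n)$ for both $j$, while every other factor $t-\rho^{(i)}$ is a $\mathfrak q$-unit. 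Since $T$ omits only the single factor $t-\rho$, it retains at least one $y_j$, whence $v_{\mathfrak q}(T)\ge v_{\mathfrak q}(\delta_n)$. The real work lies in verifying the two polynomial congruences for $w$ and in this local distribution of the valuation evenly between the two conjugate factors; the remaining bookkeeping is formal.
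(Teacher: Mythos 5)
Your proof is correct, but it follows a genuinely different route from the paper's. The paper works globally: it expands $f_n$ in a Taylor series at $t$ and exhibits explicit monic polynomials (its equations (2.5) and (2.8)) satisfied by $(\rho-t)/(\rho-\rho^{(2)})$ and $T/((\rho-\rho^{(2)})^4\delta_n)$, then checks that every coefficient is an algebraic integer; the arithmetic inputs are the congruences $\tfrac{1}{k!}f_n^{(k)}(-un^2)\equiv 0 \pmod{\Delta_n}$ for $k\le 4$, together with $f_n(-(n^2+3n+4))\equiv 0\pmod{\delta_n^2}$, $f_n'(-(n^2+3n+4))\equiv 0\pmod{\delta_n}$ and the ideal identity $(\Delta_n)=(\rho-\rho^{(2)})^5$. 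You instead localize: writing $f_n(t)=\prod_i(t-\rho^{(i)})$ and $T=\prod_{i\ne 0}(t-\rho^{(i)})$, you distribute the valuation of $f_n(t)$ among the conjugate factors, using Galois-invariance of the totally ramified primes, the $5$-cycle of distance-$2$ differences at the primes dividing $e$, and a Hensel/ultrametric balancing argument at the primes dividing $\delta_n$ (your polynomial-division verification that $\Delta_n\mid 5^5 f_n(t)$ checks out, and so does the count of at most two non-unit factors $t-\rho^{(i)}$ at $\mathfrak q\mid\delta_n$, since any three residues in $\Z/5\Z$ contain two differing by $\pm 2$). The trade-off: your argument needs only the single congruence $\Delta_n\mid f_n(t)$ rather than the full family $\Delta_n\mid f_n^{(k)}(t)/k!$, because ramification equidistributes the valuation for you, and it makes visible exactly where $\delta_n^2\mid f_n(t)$ versus $\delta_n\mid f_n'(t)$ are used; the paper's argument avoids all case analysis on splitting types and produces a single global integrality certificate, at the cost of a larger batch of polynomial congruences to verify. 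Both proofs rest on the same two computations modulo $\delta_n^2$ and $\delta_n$, which you correctly identified as the computational heart.
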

\begin{proof}\
Put $y:=\rho -\rho^{(2)},\ \phi:=(\rho-t)/y$ and $\xi:=T/(y^4\delta_n) $. First, we show that  $\phi \in \mathcal O_{K_n}$.
Substituting $\rho=y \phi +t$ for $X$ in $f_n(X)=\sum_{k=0}^5 \frac{f_n^{(k)}(t)}{k!} (X-t)^k$, we get 
$\sum_{k=0}^5 \frac{f_n^{(k)} (t)}{k!} (y\phi )^k=0$ and hence $\phi$ is a root of the monic polynomial
\begin{equation}\label{eq:phi-poly}
\sum_{k=0}^5 \frac{f_n^{(k)}(t)}{k! y^{5-k}} X^k.
\end{equation}
We show that all coefficients of this polynomial  are algebraic integers.
By direct calculation, we have
$f_n^{(k)}(-un^2)/k! \equiv 0 \pmod{\Delta_n}$
for any $k\in \{ 0,\ldots, 5\}$. Since $(\Delta_n )=(y^5)$ from Lemmas~\ref{lem:half-D} and \ref{lem:r-r}, and $t\equiv -un^2 \pmod{\Delta_n}$ from Lemma~\ref{lem:t-mod},
we have
\begin{equation}\label{eq:phi-integral}
\frac{f_n^{(k)}(t)}{k!} \in (\Delta_n )=(y^5) \quad (k\in \{0,\ldots, 5\})
\end{equation}
and hence all the coefficients of the polynomial  (\ref{eq:phi-poly}) are algebraic integers, which implies $\phi \in \mathcal O_{K_n}$.

Next, we show that $\xi \in \mathcal O_{K_n}$. Since $\rho$ is a root of $f_n(X)$, we have $-f_n(t)=f_n (\rho)-f_n(t)=(\rho-t)T$ and hence
\begin{equation}\label{eq:r-t}
\rho -t =-\frac{f_n(t)}{T} =-\frac{f_n(t)}{y^4\delta_n \xi}.
\end{equation}
Furthermore, substituting $\rho$ for $X$ in $f_n(X)=\sum_{k=0}^5 \frac{f_n^{(k)}(t)}{k!} (X-t)^k$, we obtain
\begin{equation}\label{eq:sum=0}
\sum_{k=0}^5 \frac{f_n^{(k)} (t) }{k!} (\rho -t)^k=0.
\end{equation}
From (\ref{eq:r-t}) and (\ref{eq:sum=0}), we obtain
\[
\sum_{k=0}^5 \frac{ f_n^{(k)}(t) f_n (t)^{k-1}}{k!} \left(- \frac{1}{y^4 \delta_n } \right)^k \xi^{5-k} =0.
\]
Therefore, $\xi$ is a root of the monic polynomial 
\begin{equation}\label{eq:xi-poly}
\sum_{k=0}^5 \frac{ f_n^{(k)}(t) f_n (t)^{k-1}}{k!} \left(- \frac{1}{y^4 \delta_n } \right)^k X^{5-k}.
\end{equation}
We show that all coefficients of this polynomial  are algebraic integers.
By direct calculation, we obtain $f_n(-(n^2+3n+4)) \equiv 0 \pmod{\delta_n^2}$ and $f_n'(-(n^2+3n+4)) \equiv 0
\pmod{\delta_n}$. Since $t\equiv -(n^2+3n+4) \pmod{\delta_n^2}$ from Lemma~\ref{lem:t-mod}, we have
\begin{equation}\label{eq:delta-delta}
f_n(t) \in (\delta_n^2), \quad f_n'(t) \in (\delta_n).
\end{equation}
From (\ref{eq:phi-integral}), (\ref{eq:delta-delta}) and $(\Delta_n ,\delta_n )=1$, we obtain $f_n (t) \in (\Delta_n \delta_n^2) =(y^5 \delta_n^2)$ 
and $f_n'(t) \in (\Delta_n \delta_n )=(y^5 \delta_n)$, and hence
\begin{equation}\label{eq:f-f'}
\frac{f_n(t)}{y^5 \delta_n^2}\ \in \mathcal O_{K_n} , \quad \frac{f_n'(t)}{y^5\delta_n} \ \in \mathcal O_{K_n}.
\end{equation}
We see all  coefficients of the polynomial  (\ref{eq:xi-poly}) are algebraic integers from  (\ref{eq:phi-integral}) and (\ref{eq:f-f'}), which implies $\xi\in \mathcal O_{K_n}$.
\end{proof}

We give the proof of Theorem~\ref{theo:IB}. For $x_1,\ldots, x_5 \in K_n$, we define the discriminant
\[
d(x_1,\ldots, x_5) := \begin{vmatrix}
x_1 & x_2 & \cdots & x_5 \\
x_1^{(1)} & x_2^{(1)} & \cdots & x_5^{(1)} \\
\vdots & \vdots &  & \vdots \\
x_1^{(4)}  & x_2^{(4)}  &  \cdots & x_5^{(4)} 
\end{vmatrix}^2.
\]
The set $ \{ x_1,\ldots, x_5 \}$ is an integral basis of $K_n$ if and only if $x_1,\ldots, x_5 \in \mathcal O_{K_n}$ and $d(x_1,\ldots ,x_5)=D_{K_n}$.
\begin{proof}[Proof of Theorem~\ref{theo:IB}]\ 
From the multilinearity of a determinant, it follows that 
\[
d(1,\phi_1,\phi_2,\phi_3,\phi_4) =\frac{1}{(b^2c^4d^6e^{10} \delta_n)^2 }{d(1,\rho,\rho^2,\rho^3,\rho^4)} .
\]
Since $d(1,\rho, \rho^2,\rho^3,\rho^4)=d(f_n) =\delta_n^2 \Delta_n^4$, we obtain 
\[
d(1,\phi_1,\phi_2,\phi_3,\phi_4) = \mathfrak f_{K_n}^4=D_{K_n}.
\]

Next, we show that $\phi_1,\phi_2,\phi_3, \phi_4 \in \mathcal O_{K_n}$. 
From Lemmas \ref{lem:r-r} and \ref{lem:r-integral}, we have
\begin{equation}\label{eq:(r-t)}
(\rho -t) \subset (\rho -\rho^{(2)} ) =e \prod_{k=1}^{\ell} \mathfrak p_k^{m_k},
\end{equation}
and hence $\phi_1 =(\rho -t)/e \in \mathcal O_{K_n}$.  Let $p_j$ be a prime number dividing $cd$, then we have $m_j \in \{ 3,4 \}$.
From (\ref{eq:(r-t)}), we obtain
$( (\rho -t)/e)^2 \subset \prod_{k=1}^{\ell} \mathfrak p_k^{2m_k}$.
Since $c$ and $d$ are square-free and coprime, we have
\[
v_{\mathfrak p_j} (cd)=5 \leq 6 \leq v_{\mathfrak p_j} ( \prod_{k=1}^{\ell} \mathfrak p_k^{2m_k} ) \leq v_{\mathfrak p_j} ( ( \frac{\rho -t}{e} )^2 ).
\]
Since this inequality holds for all primes $p_j$  dividing $cd$, we conclude that $\phi_2= (\rho-t)^2/cde^2 \in \mathcal O_{K_n}$. 
Similarly,  let $p_j$ be a prime number dividing $bcd$, then we have $m_j \in \{ 2,3,4 \}$.
From (\ref{eq:(r-t)}), we obtain
$( (\rho -t)/e)^3 \subset \prod_{k=1}^{\ell} \mathfrak p_k^{3m_k}$.
Since $b,c$ and $d$ are square-free and pairwise coprime, if $p_j$ divides $bc$, then we have
\[
v_{\mathfrak p_j} (bcd^2)=5 \leq 6 \leq v_{\mathfrak p_j} ( \prod_{k=1}^{\ell} \mathfrak p_k^{3m_k} ) \leq v_{\mathfrak p_j} ( ( \frac{\rho -t}{e} )^3 ), 
\]
and if $p_j$ divides $d$, then we have
\[
v_{\mathfrak p_j} (bcd^2)=10 \leq 12 = v_{\mathfrak p_j} ( \prod_{k=1}^{\ell} \mathfrak p_k^{3m_k} ) \leq v_{\mathfrak p_j} ( ( \frac{\rho -t}{e} )^3 ).
\]
We conclude that $\phi_3= (\rho-t)^3/bcd^2e^3 \in \mathcal O_{K_n}$. 
Finally, we show that $\phi_4 \in \mathcal O_{K_n}$.
From Lemmas~\ref{lem:r-r} and \ref{lem:r-integral}, we have
\[
 \left( \frac{T}{\delta_n} \right) \subset (\rho-\rho^{(2)} )^4=e^4 \prod_{k=1}^{\ell} \mathfrak p_k^{4m_k},
\]
and hence 
\[
 \left( \frac{T}{e^4 \delta_n}  \right) \subset \prod_{k=1}^{\ell} \mathfrak p_k^{4m_k}.
\]
Let $p_j$ be a prime number dividing $bcd$. If $p_j$ divides $b$, then we have $m_j=2$ and
\[
v_{\mathfrak p_j} (bc^2d^3)=5 \leq 8 = v_{\mathfrak p_j} ( \prod_{k=1}^{\ell} \mathfrak p_k^{4m_k} ) \leq v_{\mathfrak p_j} ( \frac{T}{e^4 \delta_n} ),
\]
and if $p_j$ divides $c$, then we have $m_j=3$ and 
\[
v_{\mathfrak p_j} (bc^2d^3)=10 \leq 12 = v_{\mathfrak p_j} ( \prod_{k=1}^{\ell} \mathfrak p_k^{4m_k} ) \leq v_{\mathfrak p_j} ( \frac{T}{e^4\delta_n}).
\]
If $p_j$ divides $d$, then we have $m_j=4$ and
\[
v_{\mathfrak p_j} (bc^2d^3)=15 \leq 16 = v_{\mathfrak p_j} ( \prod_{k=1}^{\ell} \mathfrak p_k^{4m_k} ) \leq v_{\mathfrak p_j} ( \frac{T}{e^4\delta_n}).
\]
Since these inequalities hold for all primes $p_j$  dividing $bcd$,
we conclude that $\phi_4= T/(bc^2d^3e^4 $ $\delta_n ) \in \mathcal O_{K_n}$. 
The proof is complete.
\end{proof}

The following example is a case that $\Delta_n$ is cube-free, and Eloff, Spearman and Williams
give an integral basis in their paper \cite[p.~770]{ESW1}.
\begin{ex}\label{ex:n=14}
Let $n=14$. We have $\Delta_n=11\times 71^2,\  a=11,\  b=71,\ c=d=e=1,\ \delta_n=79\times 7^2 ,\, \mathfrak f_{K_n}=11\times 71$ and $D_{K_n}=11^4 \times 71^4$.
Integers $u=44361$ and $t=645583287961$ satisfy $5u \equiv 1  \pmod{\Delta_n}$ and (\ref{eq:t})  respectively.
Put
\[
\phi_1 := \rho-t,\ \phi_2:=(\rho-t)^2, \ \phi_3:= \frac{1}{71} (\rho-t)^3,\ \phi_4:= \frac{1}{71 \delta_n} T
\]
with 
\begin{align*}
T:= &\rho^4 + 645583288157\rho^3 + 416777781821069771971063\rho^2  \\
   & +  269064770737138517575467371288327050\rho \\
   & + 173703719366954541829067611507591745478095648728.
\end{align*}
It follows that $\{ 1, \phi_1, \phi_2, \phi_3, \phi_4 \}$ is an integral basis of $K_n$ from Theorem~\ref{theo:IB}.
On the other hand, Eloff, Spearman and Williams \cite[Example]{ESW1} gave another integral basis $\{ 1,\rho,\rho^2, v_4,v_5 \}$ with
\begin{align*}
v_4 & := \frac{1}{71} (5+29 \rho +4\rho^2+\rho^3) , \\
v_5 & := \frac{1}{274841}(50339+27624\rho+112706 \rho^2+220601 \rho^3+\rho^4).
\end{align*}
The two bases satisfy the relation $(1,\rho,\rho^2,v_4,v_5)R=(1,\phi_1, \phi_2,\phi_3,\phi_4)$  by the matrix $R$ with det$(R)$=1 given by
\[
R:=  \left[ \begin{array}{ccccc}
1 & a_{12} & a_{13} & a_{14} & a_{15} \\
0 & 1  & a_{23} & a_{24} & a_{25} \\
0  & 0  & 1  & a_{34} & a_{35} \\
0 & 0 &  0  & 1 & a_{45}\\
0 & 0 & 0 & 0 & 1
\end{array} \right], 
\]
where
{\small
\begin{align*}
a_{12} &=-645583287961,\\
a_{13} &= 416777781694535447537521,\\
a_{14} &=-3789644657119015103770949691480066, \\
a_{15}&=632015308367217925378919489841733010177449, \\
a_{23} &=-1291166575922, \\
a_{24}&=17610328803994455529754,\\
 a_{25}&=978983378524814411152079381822,\\
a_{34}&=-27278167097,\\
a_{35}&= 1516432343858767213,\\
a_{45}&=166774236.
\end{align*}
}
\end{ex}

The next example is a case that $\Delta_n$ is not cube-free  and no NIB has been obtained in  \cite[p.~771]{ESW1}.
\begin{ex}\label{ex:n=44}
Let $n=44$. We have $\Delta_n=61\times 41^3,\  a=61,\  b=1,\ c=41,\ d=e=1,\ \delta_n=95311 $ and $\mathfrak f_{K_n}=41\times 61$ and $D_{K_n}=41^4 \times 61^4$.
Integers $u=3363345$ and $t=30447786579308863$ satisfy $5u \equiv 1  \pmod{\Delta_n}$ and (\ref{eq:t})  respectively.
Put
\[
\phi_1 := \rho-t,\ \phi_2:=\frac{1}{41} (\rho-t)^2, \ \phi_3:= \frac{1}{41} (\rho-t)^3,\ \phi_4:= \frac{1}{41^2 \delta_n} T
\]
with 
\begin{align*}
T:= & \rho^4 + 30447786579310799\rho^3 + 927067707579199859568212292129103\rho^2\\
    & +28227159704940594977085733369537146411892512335866 \rho \\
    &  +859454534436098172963532872007773204827694194742789092180006673736.
\end{align*}
It follows that $\{ 1, \phi_1, \phi_2, \phi_3, \phi_4 \}$ is an integral basis of $K_n$ from Theorem~\ref{theo:IB}.
\end{ex}
\section{Normal integral bases}\label{sec:NIB}
Let $n$ be an integer and put $\zeta:=\zeta_5$.
We construct the generator of an $\mathrm{NIB}$ from a normal basis and the
integral basis given by Theorem\,\ref{theo:IB}.
The idea is based on the method  of Acciaro and Fieker \cite{AF}.
\begin{lem}\label{lem:NB}
The set $\{ \rho, \rho^{(1)}, \rho^{(2)}, \rho^{(3)},\rho^{(4)} \}$ is a normal basis if and only if $n\ne 0$.
\end{lem}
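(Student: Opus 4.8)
The plan is to recall that a set $\{\rho^{(i)}\}_{i\in\Z/5\Z}$ of conjugates forms a normal basis of $K_n/\Q$ precisely when the conjugates are $\Q$-linearly independent, equivalently when the circulant-type determinant $d(\rho,\rho^{(1)},\rho^{(2)},\rho^{(3)},\rho^{(4)})$ (in the notation of \S\ref{sec:IB}) is nonzero. So the proof reduces to computing this determinant and determining exactly when it vanishes.

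First I would observe that $\{\rho^{(i)}\}$ and $\{1,\rho,\rho^2,\rho^3,\rho^4\}$ are related by the explicit change-of-basis matrix recorded in Lemma~\ref{lem:powers}, which expresses $\rho^2,\rho^3,\rho^4$ as $\Q$-linear combinations of the conjugates (with a common denominator $n^2$). Reading Lemma~\ref{lem:powers} in reverse gives the matrix $M$ (with entries in $\tfrac{1}{n^2}\Z$) sending $(\rho,\rho^{(1)},\dots,\rho^{(4)})^{\mathsf T}$ to $(1,\rho,\rho^2,\rho^3,\rho^4)^{\mathsf T}$; strictly, the row corresponding to the constant $1$ is handled separately, since $1$ is not among the $\rho^{(i)}$, but the relation $\sum_i \rho^{(i)}=\Tr_{K_n/\Q}(\rho)=-n^2$ (the trace read off from $f_n$) lets me express $1$ as well. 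The key step is then the determinant identity
\[
d(\rho,\rho^{(1)},\rho^{(2)},\rho^{(3)},\rho^{(4)})=\frac{1}{(\det M)^2}\,d(1,\rho,\rho^2,\rho^3,\rho^4)=\frac{\delta_n^2\Delta_n^4}{(\det M)^2},
\]
using $d(1,\rho,\dots,\rho^4)=d(f_n)=\delta_n^2\Delta_n^4$ as in the proof of Theorem~\ref{theo:IB}. Since $\delta_n^2\Delta_n^4\ne 0$ always, the vanishing of the normal-basis determinant is controlled entirely by whether $\det M$ is finite and nonzero, i.e.\ by the factor of $n$ appearing in the denominators.

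The cleanest route, which I expect to be the main computation, is to evaluate the determinant of the transition matrix directly. Because the $\rho^{(i)}$ are a Galois orbit, the matrix $\bigl(\sigma^{j}(\rho^{(i)})\bigr)$ is circulant-like, and the product $\prod_i(\rho^{(i)}-\rho^{(i+1)})$ and $\prod_i(\rho^{(i)}-\rho^{(i+2)})$ computed in Lemma~\ref{lem:half-D} are exactly the kind of symmetric expressions one needs; the Vandermonde-type determinant of the conjugates factors through these. Tracking the powers of $n$ carefully, the denominator $n^2$ in Lemma~\ref{lem:powers} shows that $\det M$ blows up (equivalently, the conjugates become dependent) exactly when $n=0$. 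For the converse direction, setting $n=0$ makes $f_0(X)=X^5-10X^3+5X^2+10X+1$ and one checks directly that its five roots fail to be a normal basis (indeed at $n=0$ the expressions in Lemma~\ref{lem:powers} are undefined, signalling a linear dependence among the $\rho^{(i)}$), so the set is a normal basis iff $n\ne 0$.

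The hard part will be organizing the determinant computation so that the dependence on $n$ is transparent without grinding through the full $5\times 5$ expansion: the factor $1/n^2$ in each coefficient row of Lemma~\ref{lem:powers} suggests $\det M=c/n^{2k}$ for some constant power, and isolating the numerator to confirm it is a nonzero polynomial in $n$ with no common factor of $n$ is the delicate bookkeeping step. A slicker alternative, which I would try first to avoid this, is to argue representation-theoretically: for the cyclic group $G=\langle\sigma\rangle$ of order $5$, the orbit $\{\sigma^i(\rho)\}$ is a normal basis iff $\rho$ generates $K_n$ as a $\Q[G]$-module, which fails iff $\rho$ lies in a proper $G$-submodule, i.e.\ iff some nontrivial idempotent annihilates the component of $\rho$; since the trivial component is $\tfrac15\Tr_{K_n/\Q}(\rho)=-\tfrac{n^2}{5}$, this component vanishes exactly at $n=0$, and one then verifies the remaining nontrivial components are nonzero for all $n$, pinning down $n=0$ as the only failure.
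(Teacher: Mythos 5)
Your plan is workable but takes a genuinely different (and more laborious) route than the paper. The paper's proof is short: it factors the normal-basis discriminant as
\[
d(\rho,\rho^{(1)},\ldots,\rho^{(4)})=\prod_{k=0}^{4}\Bigl(\rho+\rho^{(1)}\zeta^{k}+\rho^{(2)}\zeta^{2k}+\rho^{(3)}\zeta^{3k}+\rho^{(4)}\zeta^{4k}\Bigr)^{2},
\]
uses the linear disjointness of $K_n$ and $\Q(\zeta)$ to see that the factors with $k\ne 0$ never vanish (a dependence among $1,\zeta^k,\ldots,\zeta^{4k}$ over $K_n$ forces all coefficients $\rho^{(i)}$ equal, which is absurd), and is left with the single condition $\rho+\rho^{(1)}+\cdots+\rho^{(4)}=-n^2\ne 0$. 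Your ``slicker alternative'' at the end is essentially this same argument in representation-theoretic clothing --- the resolvent sums are exactly the isotypic components --- but you leave the decisive step (``one then verifies the remaining nontrivial components are nonzero for all $n$'') unproved, and that step is precisely where the paper's linear-disjointness input lives. Note also that over $\Q$ the group ring $\Q[G]\cong\Q\times\Q(\zeta)$ has only \emph{one} nontrivial central idempotent, and its component of $\rho$ is $\rho+n^2/5\ne 0$ since $\rho\notin\Q$; stated that way your alternative closes cleanly.

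Your primary route, through the transition matrix $M$ of Lemma~\ref{lem:powers}, is over-engineered and contains one shaky inference. You do not need $\det M$ at all: for $n\ne 0$, Lemma~\ref{lem:powers} together with $1=-n^{-2}\sum_i\rho^{(i)}$ shows that $1,\rho,\rho^2,\rho^3,\rho^4$ all lie in the $\Q$-span of the five conjugates, so the conjugates span $K_n$ and hence form a basis --- the ``delicate bookkeeping step'' you flag as the hard part is unnecessary. Conversely, at $n=0$ the dependence is simply $\sum_i\rho^{(i)}=\Tr_{K_n/\Q}(\rho)=-n^2=0$; your suggestion that the formulas of Lemma~\ref{lem:powers} being ``undefined'' at $n=0$ already ``signals'' a linear dependence is not a valid deduction on its own, though the trace relation you quote earlier supplies the dependence immediately.
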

\begin{proof}\
Since $K_n$ and $\Q (\zeta)$ are linearly disjoint over $\Q$,  it follows that  $1, \zeta^k, \zeta^{2k}, \zeta^{3k}, \zeta^{4k} $ are 
linearly independent over $K_n$ for any   $k\in \{1,2,3,4 \}$, and hence 
\[
\rho +\rho^{(1)} \zeta^k +\rho^{(2)} \zeta^{2k} +\rho^{(3)} \zeta^{3k} +\rho^{(4)} \zeta^{4k} \ne 0.
\]
Since $d(\rho,\rho^{(1)},\rho^{(2)},\rho^{(3)},\rho^{(4)})$ is the square of the determinant of the circulant matrix,
we conclude that
\[
d(\rho, \rho^{(1)} ,\rho^{(2)}, \rho^{(3)},\rho^{(4)} )=\prod_{k=0}^4 ( \rho +\rho^{(1)} \zeta^k +\rho^{(2)} \zeta^{2k} +\rho^{(3)} \zeta^{3k} +\rho^{(4)} \zeta^{4k})^2 \ne 0
\]
if and only if
\[
\rho+\rho^{(1)}+\rho^{(2)} +\rho^{(3)} +\rho^{(4)} =-n^2 \ne 0.
\]
\end{proof}

Put $A_n:=n+2+2\zeta^4+\zeta^2,\ B_n:=n+2+2\zeta^2+\zeta,\ C_n:=n+2+2\zeta^3+\zeta^4,\ D_n:=n+2+2\zeta+\zeta^3$. We have $\Delta_n=A_nB_nC_nD_n=N_{\Q(\zeta)/\Q}(A_n)$. For $\lambda \in \Z [\zeta]$ satisfying $\lambda \not\in (1-\zeta)$, we have $\lambda \equiv \pm1, \pm2 \pmod{(1-\zeta)}$. We define $u_{\lambda} \in \Z [\zeta ]^{\times}$ by
\[
u_{\lambda}=\begin{cases}
1, & \text{if $\lambda \equiv 1 \pmod{(1-\zeta)} $}, \\
-1, & \text{if $\lambda \equiv -1 \pmod{(1-\zeta)} $}, \\
\frac{1+\sqrt{5}}{2}, & \text{if $\lambda \equiv 2 \pmod{(1-\zeta)} $}, \\
-\frac{1+\sqrt{5}}{2}, & \text{if $\lambda \equiv -2 \pmod{(1-\zeta)} $}.
\end{cases}
\]
\begin{lem}\label{lem:prime-ideal}
If  $\mathfrak p$ is  a prime ideal of $\Q(\zeta)$ including at least two of $A_n, B_n, C_n, D_n$, then we have $\mathfrak p =(1-\zeta)$ $($the prime ideal above $5)$.
In particular, if $5 \nmid \Delta_n$ then $A_n, B_n, C_n,$ and $D_n$ are pairwise coprime.
\end{lem}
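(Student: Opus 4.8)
The plan is to exploit the fact that the pairwise differences of $A_n,B_n,C_n,D_n$ are independent of $n$ and are divisible only by the prime $(1-\zeta)$ lying above $5$. Since $\Delta_n=A_nB_nC_nD_n=N_{\Q(\zeta)/\Q}(A_n)$, the first thing I would record is the Galois structure: letting $\tau$ generate $\Gal(\Q(\zeta)/\Q)$ with $\tau(\zeta)=\zeta^2$, a direct substitution gives $\tau(A_n)=C_n$, $\tau(C_n)=D_n$, $\tau(D_n)=B_n$, $\tau(B_n)=A_n$, so $A_n,B_n,C_n,D_n$ form a single Galois orbit. Consequently the six pairwise differences split, up to sign and the action of $\tau$, into just two families: the four ``adjacent'' differences (orbit of $A_n-C_n$) and the two ``opposite'' differences (orbit of $A_n-D_n$). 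It therefore suffices to analyse one representative of each.

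Next I would compute these representatives explicitly; because the term $n+2$ cancels, each is a fixed element of $\Z[\zeta]$, and a short calculation gives the factorizations
\[
A_n-C_n=\zeta^2(\zeta-1)^2,\qquad A_n-D_n=\zeta(\zeta-1)(2\zeta^2+\zeta+2).
\]
Using that $\zeta$ is a unit and $N_{\Q(\zeta)/\Q}(1-\zeta)=\Phi_5(1)=5$, the first factorization yields $N_{\Q(\zeta)/\Q}(A_n-C_n)=5^2$. For the second I would evaluate $N_{\Q(\zeta)/\Q}(2\zeta^2+\zeta+2)$ by pairing conjugates: with $g(x)=2x^2+x+2$ the products $g(\zeta)g(\zeta^4)$ and $g(\zeta^2)g(\zeta^3)$ each collapse to $5$ after applying $1+\zeta+\zeta^2+\zeta^3+\zeta^4=0$, giving $N_{\Q(\zeta)/\Q}(2\zeta^2+\zeta+2)=25$ and hence $N_{\Q(\zeta)/\Q}(A_n-D_n)=5^3$. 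By the conjugacy of the previous step, every pairwise difference of $A_n,B_n,C_n,D_n$ has norm a power of $5$.

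The lemma then follows at once. If a prime ideal $\mathfrak p$ contains two of $A_n,B_n,C_n,D_n$, say $X$ and $Y$, then $\mathfrak p$ contains the nonzero element $X-Y$, so the rational prime below $\mathfrak p$ divides $N_{\Q(\zeta)/\Q}(X-Y)$, a power of $5$; thus $\mathfrak p$ lies above $5$, and since $5$ is totally ramified in $\Q(\zeta)$ with $(5)=(1-\zeta)^4$, the unique prime above $5$ forces $\mathfrak p=(1-\zeta)$. For the final assertion, if $5\nmid\Delta_n$ then $5=N_{\Q(\zeta)/\Q}(1-\zeta)$ does not divide $N_{\Q(\zeta)/\Q}(A_n)=\Delta_n$, so $(1-\zeta)$ contains none of $A_n,B_n,C_n,D_n$; together with the first part, no prime ideal can contain two of them, i.e.\ they are pairwise coprime.

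All the computations are finite and explicit, so the one place I would check most carefully is the norm of the opposite difference: a priori a rational prime other than $5$ could divide $N_{\Q(\zeta)/\Q}(2\zeta^2+\zeta+2)$, which would break the argument. It is precisely the cancellation $g(\zeta)g(\zeta^4)=g(\zeta^2)g(\zeta^3)=5$ that rules this out, and verifying that both conjugate pairings collapse to $5$ is the main technical point of the proof.
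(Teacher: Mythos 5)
Your proof is correct and follows essentially the same route as the paper: show that every pairwise difference of $A_n,B_n,C_n,D_n$ is independent of $n$ and has norm a power of $5$, so a prime ideal containing two of them lies above $5$ and must be $(1-\zeta)$, and the coprimality claim then follows from $5\nmid\Delta_n=N_{\Q(\zeta)/\Q}(A_n)$. The only difference is one of thoroughness: the paper computes a single representative difference $A_n-B_n=2\zeta^4-\zeta^2-\zeta$ of norm $5^2$ and dismisses the remaining cases as similar, whereas you organize the six differences into two Galois orbits and verify both representatives (norms $5^2$ and $5^3$) explicitly.
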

\begin{proof}\
Assume that $A_n, B_n \in \mathfrak p$ (we can show other cases similarly).
Since $A_n-B_n=2\zeta^4-\zeta^2-\zeta \in \mathfrak p$ and 
$N_{\Q(\zeta)/\Q}(2\zeta^4-\zeta^2-\zeta)=5^2$,  we have $\mathfrak p=(1-\zeta)$.
\end{proof}
\begin{lem}\label{lem:p=0,1} \cite[Lemma~2.1.1]{Je} 
Let $p$ be a prime number dividing $\Delta_n$. We have $p\equiv 0,1 \pmod{5}$.
\end{lem}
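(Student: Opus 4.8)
The plan is to treat the prime $p=5$ trivially (it gives $p\equiv 0\pmod 5$) and to show that any prime $p\ne 5$ dividing $\Delta_n$ must satisfy $p\equiv 1\pmod 5$. The key structural input is the factorization $\Delta_n=N_{\Q(\zeta)/\Q}(A_n)=A_nB_nC_nD_n$ recorded just before Lemma~\ref{lem:prime-ideal}, together with the fact that $A_n,B_n,C_n,D_n$ are precisely the four Galois conjugates of $A_n=n+2+2\zeta^4+\zeta^2$: applying $\zeta\mapsto\zeta^2,\zeta^3,\zeta^4$ to $A_n$ produces $C_n,B_n,D_n$ respectively, and since these conjugates differ from $A_n$ by fixed nonzero elements of $\Q(\zeta)$, they are pairwise distinct for every $n$. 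In particular $\sigma(A_n)=A_n$ for $\sigma\in\Gal(\Q(\zeta)/\Q)$ forces $\sigma=\mathrm{id}$.

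First I would fix a prime $p\ne 5$ with $p\mid\Delta_n$. Because $p$ divides the ideal norm of $(A_n)$, some prime ideal $\mathfrak p$ of $\Z[\zeta]$ lying above $p$ divides $(A_n)$, i.e.\ $A_n\in\mathfrak p$. Since $p\ne 5$, the prime $p$ is unramified in $\Q(\zeta)$, so the Frobenius automorphism $\sigma_p\in\Gal(\Q(\zeta)/\Q)$ at $\mathfrak p$ exists and is the map $\zeta\mapsto\zeta^p$, characterized by $\sigma_p(x)\equiv x^p\pmod{\mathfrak p}$ for all $x\in\Z[\zeta]$.

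Next I would exploit that $A_n\in\mathfrak p$. Applying the Frobenius congruence to $x=A_n$ gives
\[
\sigma_p(A_n)\equiv A_n^{\,p}\equiv 0\pmod{\mathfrak p},
\]
so $\sigma_p(A_n)\in\mathfrak p$ as well. Now suppose, for contradiction, that $p\not\equiv 1\pmod 5$. Then $\sigma_p\ne\mathrm{id}$, so $\sigma_p(A_n)$ is a conjugate of $A_n$ distinct from $A_n$, hence one of $B_n,C_n,D_n$. Consequently $\mathfrak p$ contains two distinct elements among $A_n,B_n,C_n,D_n$, and Lemma~\ref{lem:prime-ideal} forces $\mathfrak p=(1-\zeta)$, the prime above $5$, so $p=5$. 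This contradicts $p\ne 5$, and therefore $p\equiv 1\pmod 5$. Combining with the case $p=5$ yields $p\equiv 0,1\pmod 5$.

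I expect the only delicate point to be the clean identification of $\sigma_p(A_n)$ with a genuinely distinct conjugate: this rests on the unramifiedness of $p\ne 5$ (to guarantee a well-defined Frobenius) and on the distinctness of $A_n,B_n,C_n,D_n$ (to ensure $\sigma_p\ne\mathrm{id}$ actually moves $A_n$). Both are immediate here, so once the Frobenius congruence $\sigma_p(A_n)\equiv A_n^{\,p}\pmod{\mathfrak p}$ is in place, the conclusion follows directly from Lemma~\ref{lem:prime-ideal} with no further computation.
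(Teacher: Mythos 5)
Your argument is correct. Note first that the paper itself gives no proof of this lemma: it is quoted directly from Jeannin \cite[Lemma~2.1.1]{Je}, so there is nothing internal to compare against line by line. Your route is a clean, self-contained derivation that fits naturally with the surrounding material: the identification of $B_n,C_n,D_n$ as the images of $A_n=n+2+2\zeta^4+\zeta^2$ under $\zeta\mapsto\zeta^3,\zeta^2,\zeta^4$ is easily checked, the pairwise distinctness follows because the differences (e.g.\ $A_n-B_n=2\zeta^4-\zeta^2-\zeta$) are nonzero constants independent of $n$, and the Frobenius congruence $\sigma_p(A_n)\equiv A_n^p\equiv 0\pmod{\mathfrak p}$ for an unramified $p\ne 5$ correctly forces $\mathfrak p$ to contain a second conjugate whenever $\sigma_p\ne\mathrm{id}$, at which point Lemma~\ref{lem:prime-ideal} (which is proved independently by a norm computation, so there is no circularity) gives $\mathfrak p=(1-\zeta)$ and the desired contradiction. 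In effect you have shown that any prime $p\ne 5$ dividing $\Delta_n$ splits completely in $\Q(\zeta)$, which is exactly the content of $p\equiv 1\pmod 5$; this is presumably close in spirit to Jeannin's original argument, but as a replacement for the citation your proof is complete and uses only facts already available at that point in the paper.
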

\begin{lem}\label{lem:alpha-A}
Assume  $5\nmid n$.
Let $s=p_1 \cdots p_k$ ($p_1,\cdots,p_k$ are not necessarily different prime numbers) 
be a positive integer dividing $\Delta_n$.  Then, $p_i \, (i\in  \{1,\ldots, k\})$ 
 decomposes into a product of prime elements $\pi_i^{(t)} \, (t \in \{1,\ldots 4 \})$  in $\Z [\zeta]$ as follows.
 \begin{equation}\label{eq:alpha-A}
 p_i= \prod_{t=1}^4 \pi_i^{(t)}, \quad \pi_i^{(1)}|A_n ,\ \pi_i^{(2)} |B_n,\ \pi_i^{(3)} |C_n,\ \pi_i^{(4)} |D_n \ \text{ in } \Z [\zeta].
 \end{equation}
 Furthermore,  put $\lambda_t := \prod_{i=1}^k \pi_i^{(t)} \, (\in \Z [\zeta ])$, and let $u_{\lambda_t}$ be the unit defined before Lemma~\ref{lem:prime-ideal}
 and $\alpha_A:=u_{\lambda_1} \lambda_1,\ \alpha_B:=u_{\lambda_2} \lambda_2,\ \alpha_C:=u_{\lambda_3} \lambda_3,\ \alpha_D:=u_{\lambda_4} \lambda_4$
 then we have  $s=N_{\Q (\zeta)/\Q} (\alpha_A)=N_{\Q (\zeta)/\Q} (\alpha_B)=N_{\Q (\zeta)/\Q} (\alpha_C)=N_{\Q (\zeta)/\Q} (\alpha_D)$,
 $\alpha_A \equiv \alpha_B \equiv \alpha_C\equiv \alpha_D\equiv 1 \pmod{(1-\zeta)}$ and  $\alpha_A |A_n,\ \alpha_B | B_n,\ \alpha_C | C_n ,\ \alpha_D |D_n$
 in $\Z[\zeta]$.
\end{lem}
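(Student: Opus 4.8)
The plan is to analyze the splitting of each $p_i$ in $\Z[\zeta]$ and then track how the four resulting prime factors are distributed among $A_n,B_n,C_n,D_n$ via the Galois action. Since $5\nmid n$ we have $5\nmid\Delta_n$, so by Lemma~\ref{lem:p=0,1} every prime $p_i$ dividing $\Delta_n$ satisfies $p_i\equiv 1\pmod 5$ and hence splits completely: $(p_i)=\prod_{t=1}^{4}(\pi_i^{(t)})$ as a product of four distinct prime ideals, each generated by a prime element because $\Z[\zeta]$ is a principal ideal domain. The first step is to record that $B_n,C_n,D_n$ are exactly the nontrivial $\Gal(\Q(\zeta)/\Q)$-conjugates of $A_n$, so that $\Delta_n=N_{\Q(\zeta)/\Q}(A_n)=A_nB_nC_nD_n$, and that by Lemma~\ref{lem:prime-ideal} the four elements $A_n,B_n,C_n,D_n$ are pairwise coprime in $\Z[\zeta]$.

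Next I would establish the assignment in (\ref{eq:alpha-A}). Each prime $\mathfrak q$ above $p_i$ divides $\Delta_n=A_nB_nC_nD_n$, and by pairwise coprimality it divides exactly one of the four factors. Because $p_i$ splits completely its decomposition group is trivial, so $\Gal(\Q(\zeta)/\Q)$ permutes the four primes above $p_i$ simply transitively; the same group permutes $\{A_n,B_n,C_n,D_n\}$ simply transitively (the four conjugates are distinct). Since the divisibility relation is Galois equivariant, the map sending each prime above $p_i$ to the unique one of $A_n,B_n,C_n,D_n$ it divides is a bijection. Relabelling accordingly, I obtain prime elements $\pi_i^{(1)},\dots,\pi_i^{(4)}$ with $\pi_i^{(1)}\mid A_n$, $\pi_i^{(2)}\mid B_n$, $\pi_i^{(3)}\mid C_n$, $\pi_i^{(4)}\mid D_n$; absorbing the unit arising from the principal-ideal factorization into a single factor gives $p_i=\prod_{t}\pi_i^{(t)}$ exactly.

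Then I would verify the global divisibility $\lambda_1\mid A_n$, and likewise $\lambda_2\mid B_n$, $\lambda_3\mid C_n$, $\lambda_4\mid D_n$, by a valuation count. For a prime $p$ dividing $s$, let $\mathfrak q$ be the unique prime above $p$ dividing $A_n$; since the conjugate primes above $p$ divide $B_n,C_n,D_n$ but not $A_n$, and $p$ is unramified in $\Q(\zeta)$, we get $v_{\mathfrak q}(A_n)=v_{\mathfrak q}(\Delta_n)=v_p(\Delta_n)$. As $s\mid\Delta_n$, the multiplicity of $p$ in $s$ is at most $v_p(\Delta_n)=v_{\mathfrak q}(A_n)$, and this multiplicity equals $v_{\mathfrak q}(\lambda_1)$ by construction; hence $v_{\mathfrak q}(\lambda_1)\le v_{\mathfrak q}(A_n)$ at every prime, giving $\lambda_1\mid A_n$. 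Multiplicativity of the norm then yields $N_{\Q(\zeta)/\Q}(\lambda_t)=\prod_{i}p_i=s$ for each $t$.

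Finally, multiplying by the units $u_{\lambda_t}$ normalizes the residues modulo $(1-\zeta)$: each $u_\lambda$ was chosen so that $u_\lambda\lambda\equiv 1\pmod{(1-\zeta)}$ (using $\sqrt 5\equiv 0$ and $2^{-1}\equiv -2\pmod{(1-\zeta)}$), so $\alpha_A\equiv\alpha_B\equiv\alpha_C\equiv\alpha_D\equiv 1\pmod{(1-\zeta)}$. Since these units have norm $1$, we get $N_{\Q(\zeta)/\Q}(\alpha_A)=N_{\Q(\zeta)/\Q}(\lambda_1)=s$, and being units they preserve divisibility, so $\alpha_A\mid A_n$ (and similarly for $B_n,C_n,D_n$) follows at once. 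I expect the main obstacle to be the bookkeeping in the third step: making the per-prime factorizations of possibly repeated $p_i$ cohere into a single global divisibility $\lambda_t\mid(\text{conjugate})$, which is exactly what the valuation bound $v_p(s)\le v_p(\Delta_n)=v_{\mathfrak q}(A_n)$ controls.
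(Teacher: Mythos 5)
Your proof is correct and follows essentially the same route as the paper's: each $p_i\equiv 1\pmod 5$ splits completely, $\Z[\zeta]$ is a UFD, and Lemma~\ref{lem:prime-ideal} (pairwise coprimality of $A_n,B_n,C_n,D_n$) distributes the four prime factors among the conjugates, with the units $u_{\lambda_t}$ normalizing residues mod $(1-\zeta)$ without changing norms or divisibility. The only difference is that you spell out details the paper leaves implicit (the Galois-equivariance argument for the bijection and the valuation count giving $\lambda_t\mid A_n$ etc.), which is a faithful elaboration rather than a different approach.
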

\begin{proof}\
From Lemma\,\ref{lem:p=0,1} and $5\nmid n$, we have $p_1 \equiv \cdots \equiv p_k \equiv 1 \pmod{5}$.
Since $\mathbb{Z}[\zeta]$ is a unique factorization domain and
$p_1,\cdots,p_k$ split completely in $\mathbb{Q}(\zeta)$,
it follows that 
$p_i \, (i\in  \{1,\ldots, k\})$ 
 decomposes as (\ref{eq:alpha-A}). 
 Since $u_{\lambda_t} \in \mathbb Z [\zeta]^{\times}$, we have 
 \[
 N_{\mathbb Q (\zeta)/\mathbb Q} (u_{\lambda_t} \lambda_t)=N_{\mathbb Q (\zeta)/\mathbb Q}(u_{\lambda_t}) \prod_{i=1}^k N_{\mathbb Q (\zeta)/\mathbb Q} (\pi_i^{(t)})=
 \prod_{i=1}^kp_i=s
 \]
 for any $t\in \{1,\ldots, 4\}$. The claim $\alpha_A \equiv \alpha_B\equiv \alpha_C \equiv \alpha_D \equiv 1 \pmod{(1-\zeta)}$
 follows from the definition of $u_{\lambda_t}$, and $\alpha_A |A_n,\ \alpha_B | B_n,\ \alpha_C | C_n ,\ \alpha_D |D_n$ follows from 
Lemma~\ref{lem:prime-ideal}. 
\end{proof}

Assume  $5\nmid n$ and let $ \{1,\phi_1,\phi_2,\phi_3,\phi_4 \}$ be the integral basis given by Theorem~\ref{theo:IB}.
We find the generator of an NIB based on the method of Acciaro and Fieker \cite{AF}. Before going into a detailed discussion, we
briefly explain their method. First, we see $\ell \mathcal O_{K_n} \subset \mathbb Z [G] \cdot \rho$ for $\ell :=-bc^2d^3e^4\delta_n n^2$ in 
Lemma~\ref{lem:phi-to-rho}.  In fact, we will find elements $g_0,\ldots, g_4 \in \mathbb Z [G]$ satisfying 
$1=g_0 \cdot  \rho/\ell, \ \phi_1 =g_1 \cdot \rho/\ell,\ \phi_2=g_2 \cdot  \rho/\ell,\
\phi_3=g_3 \cdot  \rho/\ell, \ \phi_4=g_4 \cdot  \rho/\ell$. On the other hand, for the generator $\alpha$ of an NIB of $K_n$,
we can write $\alpha=g \cdot \rho/\ell$ for some $g\in \mathbb Z [G]$.
By finding the element $g$, we can find $\alpha$. We will show  (\ref{eq:g01234}) that the principal ideal $(g)$ in the group ring $\mathbb Z[G]$
is equal to the ideal generated by $g_0,\ldots, g_4$. To find the generator of the
ideal $I:=(g_0,\ldots, g_4)$, we transfer it to a unique factorization domain $\mathbb Z[\zeta]$ by the surjection $\nu :\mathbb Z [G]
\to \mathbb Z [\zeta],\ \sigma \mapsto \zeta$, and pull the generator of the ideal $\nu (I)$ back to the group ring. And 
finally we can find $g$.

By direct calculation, we obtain the following lemma.
\begin{lem}\label{lem:phi-to-rho}
Assume  $5\nmid n$ and put $\ell :=-bc^2d^3e^4 \delta_n n^2$. We have
\[
1=g_0 \cdot \frac{\rho}{\ell}, \ \phi_1 =g_1 \cdot \frac{\rho}{\ell},\ \phi_2=g_2 \cdot  \frac{\rho}{\ell},\
\phi_3=g_3 \cdot \frac{\rho}{\ell}, \ \phi_4=g_4 \cdot \frac{\rho}{\ell}
\]
with $g_0,g_1,g_2,g_3,g_4 \in \Z [G]$ given by
\begin{align*}
g_0  := &-\dfrac{\ell}{n^2} (1+\sigma+\sigma^2+\sigma^3+\sigma^4), \\
g_1 := & \dfrac{\ell}{en^2} \{ (n^2+t)+t\sigma +t\sigma^2+t\sigma^3+t\sigma^4 \}, \\
g_2  := & -\dfrac{\ell}{cde^2n^2}  \{(4+4n+3n^2+2n^3+n^4+2n^2t+t^2) +
(4+4n+2n^2+t^2)\sigma  \\
& +(4+4n+3n^2+n^3+t^2)\sigma^2  +(4+4n+n^2+t^2)\sigma^3 \\
&+(4+4n+3n^2+n^3+t^2)\sigma^4
\},
\end{align*}
\begin{align*}
g_3 := & -\dfrac{\ell}{bcd^2e^3n^2} \{  
(-3-9n-19n^2-17n^3-10n^4-4n^5-n^6-12t-12nt \\
& -9n^2t-6n^3t-3n^4t-3n^2t^2-t^3)\\
&+(-3-9n-10n^2-6n^3-2n^4-12t-12nt-6n^2t-t^3)\sigma \\
& +(-3-9n-11n^2-8n^3-4n^4-n^5-12t-12nt-9n^2t-3n^3t-t^3)\sigma^2 \\
&+(-3-9n-10n^2-5n^3-n^4-12t-12nt-3n^2t-t^3)\sigma^3 \\
&+(-3-9n-13n^2-9n^3-4n^4-n^5-12t-12nt-9n^2t-3n^3t-t^3)\sigma^4
\}, \\
g_4 := & -\dfrac{\ell}{bc^2d^3e^4\delta_n n^2} \{  
(2+2n+n^2+2t+6nt+6n^2t+2n^3t-6t^2-6nt^2-3n^2t^2+t^4) \\
&+(2+2n+2t+6nt+5n^2t+3n^3t+n^4t-6t^2-6nt^2-4n^2t^2-2n^3t^2+n^2t^3+t^4)\sigma \\
& +(2+2n+n^2+2t+6nt+4n^2t+n^3t-6t^2-6nt^2-3n^2t^2-n^3t^2+n^2t^3+t^4)\sigma^2 \\
& +(2+2n+2n^2+n^3+2t+6nt+5n^2t+4n^3t+n^4t \\
& -6t^2-6nt^2-5n^2t^2-2n^3t^2+n^2t^3+t^4)\sigma^3 \\
&+ (2+2n+2t+6nt+2n^2t-6t^2-6nt^2-3n^2t^2-n^3t^2+n^2t^3+t^4)\sigma^4
\}.
\end{align*}
\end{lem}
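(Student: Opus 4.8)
The plan is to translate each asserted identity $\phi_i = g_i.(\rho/\ell)$ into a statement about the coordinates of $\phi_i$ in the normal basis $\{\rho, \rho^{(1)}, \rho^{(2)}, \rho^{(3)}, \rho^{(4)}\}$, which is a basis by Lemma~\ref{lem:NB} since $5 \nmid n$ forces $n \neq 0$. Writing $g_i = \sum_{k=0}^4 c_{i,k}\sigma^k$, the definition of the group-ring action gives
\[
g_i . \frac{\rho}{\ell} = \frac{1}{\ell}\sum_{k=0}^4 c_{i,k}\,\rho^{(k)},
\]
so the lemma is equivalent to five coordinate identities: expand each $\phi_i$ in the normal basis and match the coefficient of $\rho^{(k)}$ against $c_{i,k}/\ell$.

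Two ingredients drive every expansion. First, the trace relation $\rho + \rho^{(1)} + \rho^{(2)} + \rho^{(3)} + \rho^{(4)} = -n^2$ (already recorded in the proof of Lemma~\ref{lem:NB}, being minus the coefficient of $X^4$ in $f_n$), which lets me replace any rational constant $\kappa$ by $\kappa = -\frac{\kappa}{n^2}\sum_{k=0}^4 \rho^{(k)}$; this alone yields $1 = g_0.(\rho/\ell)$. Second, Lemma~\ref{lem:powers}, which writes $\rho^2, \rho^3, \rho^4$ as explicit $\Q$-combinations of the normal basis. With these in hand I would expand $\phi_1 = \frac{1}{e}(\rho - t)$ directly, then $\phi_2$ and $\phi_3$ via the binomial expansions of $(\rho - t)^2$ and $(\rho - t)^3$, substituting Lemma~\ref{lem:powers} for each power $\rho^j$ and the trace relation for each pure constant. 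For $\phi_4 = \frac{1}{bc^2d^3e^4\delta_n}T$ I would insert the expression (\ref{eq:T}) for $T$, replace its $\rho^2, \rho^3, \rho^4$ terms using Lemma~\ref{lem:powers}, and absorb the remaining $t,n$-terms through the trace relation.

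Collecting the coefficient of each $\rho^{(k)}$ and clearing by $\ell = -bc^2d^3e^4\delta_n n^2$ then reproduces the stated $c_{i,k}$; en route one checks that $\ell$ divided by the denominator of $\phi_i$ (together with the $n^2$ from Lemma~\ref{lem:powers}) is an integral monomial in $b, c, d, e, \delta_n$, so that each $g_i$ genuinely lies in $\Z[G]$. I expect the only real labor to be $\phi_4$: the coefficient vector for $\rho^4$ in Lemma~\ref{lem:powers} is the longest, and combined with the many $t$- and $n$-dependent terms of $T$ it produces the bulky polynomial entries of $g_4$. This step carries essentially all the computation and is most safely done symbolically, but it is purely mechanical with no structural obstacle.
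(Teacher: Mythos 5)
Your proposal is correct and coincides with the paper's approach: the paper offers no argument beyond the phrase ``by direct calculation,'' and the calculation it intends is exactly the one you describe — expand each $\phi_i$ in the normal basis via Lemma~\ref{lem:powers} and the trace relation $\sum_{k}\rho^{(k)}=-n^2$, then match coefficients against $g_i$, checking that the prefactors $-\ell/(en^2)$, $-\ell/(cde^2n^2)$, etc.\ are integers.
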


From Lemma~\ref{lem:phi-to-rho} it follows that 
\begin{align} 
\mathcal{O}_{K_n} &
=\Z +\phi_1 \Z+\phi_2 \Z+\phi_3\Z+\phi_4\Z  \notag \\
&= (g_0 \Z [G]+g_1 \Z [G] +g_2\Z[G] +g_3\Z[G] +g_4\Z[G] )\cdot  \dfrac{\rho}{\ell}  \label{eq:O_Ln}
\end{align}
Let $\alpha$ be a generator of an NIB of $K_n$, then there exists $g \in \Z [G]$ satisfying 
\begin{equation}\label{eq:alpha-g}
\alpha=g \cdot \frac{\rho}{\ell}.
\end{equation}
Therefore, we have 
\begin{equation}\label{eq:O_Ln-g}
\mathcal O_{K_n }=\Z [G] \cdot \alpha =g\Z [G]\cdot \frac{\rho}{\ell}.
\end{equation}
From (\ref{eq:O_Ln}) and  (\ref{eq:O_Ln-g}),
we obtain the equality as ideals of $\mathbb{Z}[G]$ :
\[
(g)+\mathrm{Ann}_{\mathbb{Z}[G]}\left(\frac{\rho}{\ell}\right)
=(g_0, g_1,g_2,g_3,g_4)+\mathrm{Ann}_{\mathbb{Z}[G]}\left(\frac{\rho}{\ell}\right).
\]
Since $\{\rho/\ell ,\rho^{(1)}/\ell ,\rho_n^{(2)}/\ell, \rho^{(3)} /\ell, \rho^{(4)}/\ell \}$ is a normal basis of $K_n$
from Lemma~\ref{lem:NB},
we have $\mathrm{Ann}_{\mathbb{Z}[G]}\left(\rho/\ell \right)=0$,
and hence we get the equality as ideals of $\mathbb{Z}[G]$ :
\begin{equation}\label{eq:g01234}
(g)=(g_0,g_1,g_2,g_3,g_4).
\end{equation}
Consider the surjective ring homomorphism
\[
\nu\ :\ \mathbb{Z}[G]\longrightarrow\mathbb{Z}[\zeta],
\]
defined by $\nu(\sigma)=\zeta$.
We calculate the image of the ideal $I:=(g)=(g_0,g_1,g_2,g_3,g_4)$
by $\nu$.
Since $\nu$ is surjective, we obtain the ideal of $\mathbb{Z}[\zeta]$ :
\begin{equation}\label{eq:nu-I}
\nu(I)=(\nu(g))=(\nu(g_0), \nu(g_1),\nu(g_2),\nu(g_3),\nu(g_4)).
\end{equation}
\begin{lem}\label{lem:nu(g)}
Assume that $5 \nmid n$. We have the following.
\begin{itemize}
\item[(1)] $\nu (g_0)=0$.
\item[(2)] $\nu (g_1)=- n^2 \delta_n bc^2d^3e^3$.
\item[(3)] $\nu (g_2)=n^2 \delta_n A_n bcd^2e^2 x$ with  $x\in \Z [\zeta ]$ which is prime to $B_n$.
\item[(4)] $\nu (g_3)=n^2 \delta_n A_n B_n cde y$ with  $y\in \Z [\zeta ]$ which is prime to $C_n$.
\item[(5)] $\nu (g_4)=n^2 \delta_n A_n B_nC_n z$ with  $z\in \Z [\zeta ]$ which is prime to $D_n$.
\end{itemize}
\end{lem}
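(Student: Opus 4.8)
The plan is to dispatch (1) and (2) by direct substitution and then reduce (3)--(5) to divisibility and coprimality statements that I verify prime by prime. For (1), $\nu(g_0)=-\tfrac{\ell}{n^2}(1+\zeta+\zeta^2+\zeta^3+\zeta^4)=0$. For (2), the coefficients of $\sigma,\dots,\sigma^4$ in $g_1$ are all equal to $t$, so $\nu(g_1)=\tfrac{\ell}{en^2}\bigl((n^2+t)+t(\zeta+\zeta^2+\zeta^3+\zeta^4)\bigr)=\tfrac{\ell}{en^2}\,n^2=\ell/e$; substituting $\ell=-bc^2d^3e^4\delta_n n^2$ gives the stated value $-n^2\delta_n bc^2d^3e^3$.

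For (3)--(5) I would apply $\nu$ to the expressions of Lemma~\ref{lem:phi-to-rho} and first simplify each resulting element of $\Z[\zeta]$ by subtracting the $j$-independent part of the coefficient vector, which is a multiple of $1+\zeta+\cdots+\zeta^4=0$; this removes the top-degree-in-$t$ terms and leaves a polynomial in $\zeta$ (with coefficients in $n,t$) that is visibly divisible by $n^2$. I would then factor out the rational prefactor, which supplies $\delta_n$ and the cofactor $bcd^2e^2$ for $g_2$ and $cde$ for $g_3$, but equals $1$ for $g_4$. What remains is to show that the reduced element is divisible by $A_n$ (for $g_2$), by $A_nB_n$ (for $g_3$), and by $A_nB_nC_n$ (for $g_4$), with quotient prime to $B_n$, $C_n$, $D_n$ respectively.

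The engine for divisibility is the congruence $t\equiv-un^2\pmod{\Delta_n}$ with $5u\equiv1\pmod{\Delta_n}$ (Lemma~\ref{lem:t-mod}), which I rewrite as $5t+n^2\in(\Delta_n)\subseteq(A_n)$ in $\Z[\zeta]$, and likewise in $(B_n),(C_n),(D_n)$ since $\Delta_n=A_nB_nC_nD_n$. The cleanest verification works one prime at a time: for a prime $\mathfrak p$ of $\Z[\zeta]$ over $p_k\mid\Delta_n$ one has $\Z[\zeta]/\mathfrak p\cong\mathbb{F}_{p_k}$ with $\zeta\mapsto\omega$ a primitive fifth root of unity, and $\mathfrak p\mid A_n$ is equivalent to the relation $n+2+2\omega^4+\omega^2=0$ in $\mathbb{F}_{p_k}$ (with the analogous relations for $B_n,C_n,D_n$). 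Substituting $5t\equiv-n^2$ and this relation, I check that the reduced polynomial vanishes modulo every $\mathfrak p\mid A_n$, so that $A_n$ divides it (recall $5\nmid\Delta_n$, so $5$ is a unit); I then peel off $A_n$ and repeat modulo $B_n$ and $C_n$. For the coprimality of the quotient I run the same reduction at the primes of the next factor: at $\mathfrak p\mid B_n$ one has $\mathfrak p\nmid A_n$ by Lemma~\ref{lem:prime-ideal}, so the already-extracted factors are units modulo $\mathfrak p$ and it suffices to check that the reduced polynomial is nonzero there, which is again a finite-field identity in $n$ and $\omega$.

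The main obstacle is part (5). Because the prefactor of $g_4$ is exactly $1$, the factor $\delta_n$ is not supplied for free and must be extracted from $\nu(g_4)$ itself; this forces the use of the second congruence $t\equiv-(n^2+3n+4)\pmod{\delta_n^2}$ of Lemma~\ref{lem:t-mod} (equivalently $f_n(t)\in(\delta_n^2)$, $f_n'(t)\in(\delta_n)$, as in the proof of Lemma~\ref{lem:r-integral}). Since $\delta_n\in\Z$, divisibility $\delta_n\mid\nu(g_4)$ amounts to the five $\sigma^j$-coefficients becoming congruent modulo $\delta_n$, which I verify by substituting $t\equiv-(n^2+3n+4)$. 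One must then carry out the triple divisibility by $A_nB_nC_n$ on the remaining quartic-in-$t$ expression while keeping the two moduli $\Delta_n$ and $\delta_n^2$ separate and combining them only at the end through $(\Delta_n,\delta_n)=1$; confirming that the last cofactor $z$ is still prime to $D_n$ is the most delicate point.
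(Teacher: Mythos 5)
Your handling of (1) and (2) is correct and matches the paper, and your overall plan for (3)--(5) --- peel off the rational prefactor $n^{2}\delta_{n}\cdot(\text{cofactor})$, use $t\equiv -un^{2}\pmod{\Delta_n}$ to extract $A_n$, $A_nB_n$, $A_nB_nC_n$, verify coprimality of the quotient at the primes of the next conjugate, and in (5) pull $\delta_n$ out of $\nu(g_4)$ separately via $t\equiv-(n^{2}+3n+4)\pmod{\delta_n}$ before recombining through $(\Delta_n,\delta_n)=1$ --- is structurally the paper's proof. However, the step you use to certify divisibility has a genuine gap: from ``the reduced element vanishes modulo every prime $\mathfrak p\mid A_n$'' you conclude ``$A_n$ divides it''. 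That inference only yields divisibility by the radical of $(A_n)$. The element $A_n$ need not be squarefree in $\Z[\zeta]$: if $p^{m}\,\|\,\Delta_n$ with $m\ge 2$, then for each prime $\pi\mid p$ the full multiplicity $v_{\pi}(\Delta_n)=m$ is concentrated in exactly one of $A_n,B_n,C_n,D_n$ by Lemma~\ref{lem:prime-ideal}, so repeated prime factors occur precisely when $\Delta_n$ fails to be squarefree --- which is the only case where this lemma goes beyond Spearman--Williams. For such $n$ (e.g.\ $n=14$, $\Delta_n=11\cdot 71^{2}$) your residue-field computation establishes $v_{\mathfrak p}\ge 1$ where $v_{\mathfrak p}(A_n)=2$ is required, so parts (3)--(5) are not proved as written.

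The gap is repairable without changing your strategy: the congruences you invoke hold modulo ideals, not merely modulo primes. Indeed $5t+n^{2}\in(\Delta_n)\subseteq(A_n)$, and by definition $n\equiv-(2+2\zeta^{4}+\zeta^{2})\pmod{(A_n)}$, so you may carry out the same substitution in $\Z[\zeta]/(A_n)$ (respectively $/(A_nB_n)$, $/(A_nB_nC_n)$) instead of in $\Z[\zeta]/\mathfrak p$; the resulting identity $5H\equiv 0$, with $5$ a unit since $5\nmid\Delta_n$, then gives divisibility with the correct multiplicities. The paper's proof achieves the same thing more explicitly by exhibiting exact congruences modulo $\Delta_n$, namely $H_2\equiv uA_n(3n+4+2\zeta^{2}-\zeta^{4})$, $H_3\equiv u^{2}A_nB_nH_3'$ and $H_4\equiv u^{3}A_nB_nC_nH_4'$, from which divisibility by $A_n$, $A_nB_n$, $A_nB_nC_n$ (with multiplicity) is immediate because these divide $\Delta_n$; the cofactors are then shown to be prime to $B_n$, $C_n$, $D_n$ by reducing them modulo those elements to explicit elements of $5$-power norm. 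Your coprimality checks, by contrast, are fine as stated, since coprimality is a radical-level condition.
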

\begin{proof}\ 
We can easily prove (1) and (2).
\begin{itemize}
\item[(3)] We have $\nu (g_2)=n^2 \delta_n bcd^2e^2 H_2$ with $H_2:=n^2+(1-\zeta-\zeta^3)n +2t-2\zeta^3-\zeta$.
Since $5u \equiv 1 \pmod{\Delta_n }$ and $t\equiv -un^2 \pmod{ \Delta_n }$, it follows that $
H_2 \equiv uA_n (3n+4+2\zeta^2-\zeta^4) \pmod{\Delta_n}$. Furthermore, we have $3n+4+2\zeta^2-\zeta^4 \equiv \zeta^3-3\zeta^2-2\zeta-1 \pmod{B_n}$
and $N_{\Q (\zeta)/\Q}(\zeta^3-3\zeta^2-2\zeta-1)=5^3$.  Put $x:=H_2/A_n \, (\in \Z[\zeta])$. Then it follows from $x\equiv u(
3n+4+2\zeta^2-\zeta^4) \pmod{B_n}$ that $x$ is prime to $B_n$ and  $\nu (g_2)=n^2
\delta_n A_n bcd^2e^2x$.
\item[(4)] We have $\nu (g_3)=n^2 \delta_n cde H_3$ with 
\begin{align*}
H_3 := & \{-n^4+(\zeta^3+\zeta-3)n^3+(3\zeta^3+2\zeta-6-3t)n^2+(4\zeta^3+\zeta^2+3\zeta-8 \\
& +3\zeta^3t+3\zeta t-3t)n -3t^2+6\zeta^3t+3\zeta t+3\zeta^3+2\zeta^2+3\zeta-6\}.
\end{align*}
Since $5u \equiv 1 \pmod{\Delta_n }$ and $ t\equiv -un^2 \pmod{ \Delta_n }$, it follows that $
H_3 \equiv u^2A_n B_n H_3'\pmod{\Delta_n}$ with
\[
H_3':=-13n^2+(-16\zeta^3+13\zeta^2-3\zeta-34)n-5\zeta^3+40\zeta^2+10\zeta-5.
\]
Furthermore, we have $H_3' \equiv -10\zeta^3-10\zeta^2-5\zeta\pmod{C_n}$
and $N_{\Q (\zeta)/\Q}(-10\zeta^3-10\zeta^2-5\zeta)=5^5$.  Put $y:=H_3/(A_n B_n) \, (\in \Z[\zeta])$. Then it follows from $y\equiv u^2 H_3' \pmod{C_n}$
that $y$ is prime to $C_n$ and  $\nu (g_3)=n^2
\delta_n A_n B_n cdey$.
\item[(5)] We have $\nu (g_4)=n^2 H_4$ with 
\begin{align*}
H_4 := & \{ (\zeta^3+\zeta)n^2t +(-\zeta^3-\zeta+1)nt^2+(4\zeta^3+\zeta^2+3\zeta+2)nt +\zeta^3n-t^3  \\
& + (-2\zeta^3-\zeta)t^2+(3\zeta^3+2\zeta^2+3\zeta+4)t+(2\zeta^3+\zeta^2+1)\}.
\end{align*}
Since $5u \equiv 1 \pmod{\Delta_n }$ and $t\equiv -un^2 \pmod{ \Delta_n }$, it follows that $
H_4 \equiv u^3A_n B_n C_nH_4'\pmod{\Delta_n}$ with
\[
H_4':=n^3+(-4\zeta^3-3\zeta+2)n^2+(-5\zeta^3+5\zeta^2-5\zeta)n+15\zeta^3-10\zeta^2-5.
\]
Furthermore, we have $H_4' \equiv 25\zeta^3-25\zeta^2+25\zeta \pmod{D_n}$
and $N_{\Q (\zeta)/\Q}(25\zeta^3-25\zeta^2+25\zeta)=5^8$.  
Therefore $H_4'$ is prime to $D_n$, and hence $H_4$ is also prime to $D_n$. Furthermore, since $t\equiv -(n^2+3n+4) \pmod{\delta_n}$,
we have $H_4 \equiv 0 \pmod{\delta_n}$ and hence  $H_4 =\delta_n H_4''$ with $H_4'' \in \Z [\zeta]$.
Since $\Delta_n =A_nB_nC_nD_n$ and $\delta_n$ are coprime and $H_4 \equiv u^3A_n B_n C_nH_4'\pmod{\Delta_n}$, it follows that
$A_nB_nC_n$ divides $H_4''$ in $\Z [\zeta]$, and $H_4''/(A_nB_nC_n)$ and $D_n$ are coprime.
Put $z:=H_4''/(A_n B_nC_n) \, (\in \Z[\zeta])$.  Then it follows that $z$ is prime to $D_n$ and  $\nu (g_4)=n^2
\delta_n A_n B_n C_n z$. 
\end{itemize}
\end{proof}

The next lemma can be obtained by using  Lemma~\ref{lem:alpha-A} for some particular divisors of $\Delta_n =ab^2c^3d^4e^5$.
\begin{lem}\label{lem:alpha123}
Assume  $5\nmid n$.
\begin{enumerate}
\item[(1)]
Let $bc^2d^3e^3=p_1 \cdots p_k$ ($p_1,\cdots,p_k$ are not necessarily different prime numbers).
Then, $p_i \, (i\in  \{1,\ldots, k\})$ 
 decomposes into $p_i= \prod_{t=1}^4 \pi_i^{(t)}$ with $\pi_i^{(1)}|A_n $ in $\Z [\zeta]$ where 
 $ \pi_i^{(t)} $ are prime elements of $\Z [\zeta]$.
 Furthermore,  let $\lambda_1 := \prod_{i=1}^k \pi_i^{(1)} \, (\in \Z [\zeta ])$, $u_{\lambda_1}\, (\in \Z[\zeta]^{\times})$  the unit defined before Lemma~\ref{lem:prime-ideal}
 and $\alpha_1:=u_{\lambda_1} \lambda_1$.
 Then we have  $bc^2d^3e^3=N_{\Q (\zeta)/\Q} (\alpha_1)$,
 $\alpha_1 \equiv 1 \pmod{(1-\zeta)}$ and  $\alpha_1 |A_n$
 in $\Z[\zeta]$.
 \item[(2)]
Let $bcd^2e^2=p_1 \cdots p_k$ ($p_1,\cdots,p_k$ are not necessarily different prime numbers).
Then, $p_i \, (i\in  \{1,\ldots, k\})$ 
 decomposes into $p_i= \prod_{t=1}^4 \pi_i^{(t)}$ with $\pi_i^{(2)}|B_n $ in $\Z [\zeta]$ where 
 $ \pi_i^{(t)} $ are prime elements of $\Z [\zeta]$.
 Furthermore,  let $\lambda_2 := \prod_{i=1}^k \pi_i^{(2)} \, (\in \Z [\zeta ])$, $u_{\lambda_2}\, (\in \Z[\zeta]^{\times})$  the unit defined before Lemma~\ref{lem:prime-ideal}
 and $\alpha_2:=u_{\lambda_2} \lambda_2$.
 Then we have  $bcd^2e^2=N_{\Q (\zeta)/\Q} (\alpha_2)$,
 $\alpha_2 \equiv 1 \pmod{(1-\zeta)}$ and  $\alpha_2 |B_n$
 in $\Z[\zeta]$.
 \item[(3)]
Let $cde=p_1 \cdots p_k$ ($p_1,\cdots,p_k$ are not necessarily different prime numbers).
Then, $p_i \, (i\in  \{1,\ldots, k\})$ 
 decomposes into $p_i= \prod_{t=1}^4 \pi_i^{(t)}$ with $\pi_i^{(3)}|C_n $ in $\Z [\zeta]$ where 
 $ \pi_i^{(t)} $ are prime elements of $\Z [\zeta]$.
 Furthermore,  let $\lambda_3 := \prod_{i=1}^k \pi_i^{(3)} \, (\in \Z [\zeta ])$, $u_{\lambda_3}\, (\in \Z[\zeta]^{\times})$  the unit defined before Lemma~\ref{lem:prime-ideal}
 and $\alpha_3:=u_{\lambda_3} \lambda_3$.
 Then we have  $cde=N_{\Q (\zeta)/\Q} (\alpha_3)$,
 $\alpha_3 \equiv 1 \pmod{(1-\zeta)}$ and  $\alpha_3 |C_n$
 in $\Z[\zeta]$.
\end{enumerate}
\end{lem}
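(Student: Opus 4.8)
The plan is to observe that each of the three parts is an immediate specialization of Lemma~\ref{lem:alpha-A}, so that beyond a divisibility check there is essentially nothing to prove. The key point is that the three integers $bc^2d^3e^3$, $bcd^2e^2$ and $cde$ are all divisors of $\Delta_n$, and Lemma~\ref{lem:alpha-A} applies verbatim to any positive divisor $s$ of $\Delta_n$, producing the four normalized factors $\alpha_A,\alpha_B,\alpha_C,\alpha_D$; each part of the present lemma simply retains the single factor that is required.

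First I would confirm the divisibilities $bc^2d^3e^3 \mid \Delta_n$, $bcd^2e^2 \mid \Delta_n$ and $cde \mid \Delta_n$ by a $p$-adic valuation comparison. Writing $v_p(\Delta_n)=5q+r$ with $0\le r\le 4$, the decomposition $\Delta_n=ab^2c^3d^4e^5$ gives $v_p(e)=q$ and $v_p$ of $a,b,c,d$ equal to the indicators $[r=1],[r=2],[r=3],[r=4]$ respectively; in each of the five cases for $r$ one checks directly that the valuation of the relevant product does not exceed $5q+r$. Note that $e$ may share prime factors with $a,b,c,d$, so the comparison is genuinely at the level of valuations rather than of the pairwise coprime symbols, but the inequalities hold in every case.

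With these divisibilities in hand, for part~(1) I would apply Lemma~\ref{lem:alpha-A} with $s=bc^2d^3e^3$; since $5\nmid n$ forces $5\nmid \Delta_n$, its hypotheses hold, and it yields the factorization $p_i=\prod_{t=1}^4 \pi_i^{(t)}$ with $\pi_i^{(1)}\mid A_n$ together with $\alpha_A=u_{\lambda_1}\lambda_1$ satisfying $N_{\Q(\zeta)/\Q}(\alpha_A)=bc^2d^3e^3$, $\alpha_A\equiv 1 \pmod{(1-\zeta)}$ and $\alpha_A\mid A_n$. Relabelling $\alpha_A$ as $\alpha_1$ is exactly the assertion of~(1); the factors $\pi_i^{(2)},\pi_i^{(3)},\pi_i^{(4)}$ and the elements $\alpha_B,\alpha_C,\alpha_D$ are discarded. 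Parts~(2) and~(3) are identical, applying the lemma with $s=bcd^2e^2$ and keeping $\alpha_B$ (relabelled $\alpha_2$, dividing $B_n$), respectively with $s=cde$ and keeping $\alpha_C$ (relabelled $\alpha_3$, dividing $C_n$).

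I expect no real obstacle: all the substantive content — the complete splitting of the $p_i$, the pairwise coprimality of $A_n,B_n,C_n,D_n$ (Lemma~\ref{lem:prime-ideal}), and the unit normalization $u_{\lambda_t}$ securing congruence to $1$ modulo $(1-\zeta)$ — is already internal to Lemma~\ref{lem:alpha-A}. The only care needed is the divisibility check above and matching each normalized factor to the correct one of $A_n$, $B_n$, $C_n$.
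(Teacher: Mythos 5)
Your proposal is correct and matches the paper's treatment exactly: the paper simply remarks that the lemma follows immediately from Lemma~\ref{lem:alpha-A} applied to the divisors $bc^2d^3e^3$, $bcd^2e^2$ and $cde$ of $\Delta_n=ab^2c^3d^4e^5$ (the divisibility being clear since each quotient is visibly a product of the symbols $a,b,c,d,e$, so your valuation check, while harmless, is more than is needed). Nothing further to add.
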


The following proposition gives the generator of the ideal $\nu (I)$ of (\ref{eq:nu-I}).
\begin{prop}\label{prop:gen-nu}
Assume  $5\nmid n$ and  let $\alpha_1, \alpha_2, \alpha_3 \, (\in \Z [\zeta] )$  be the elements given by Lemma~\ref{lem:alpha123}.
Then we have $\nu (I) =(n^2 \delta_n \alpha_1 \alpha_2 \alpha_3)$.
\end{prop}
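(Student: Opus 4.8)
The plan is to use that $\Z[\zeta]$ is a principal ideal domain (the class number of $\Q(\zeta)$ is $1$), so that the ideal $\nu(I)=(\nu(g_0),\dots,\nu(g_4))$ of \eqref{eq:nu-I} is generated by a greatest common divisor of its generators, and to compare this gcd with $n^2\delta_n\alpha_1\alpha_2\alpha_3$ valuation by valuation. Since $\nu(g_0)=0$ by Lemma~\ref{lem:nu(g)}(1) it may be dropped, and since $n^2\delta_n$ is a common factor of $\nu(g_1),\dots,\nu(g_4)$ I would write $\nu(g_1)=-n^2\delta_n G_1$ and $\nu(g_j)=n^2\delta_n G_j$ for $j=2,3,4$, where $G_1=bc^2d^3e^3$, $G_2=A_nbcd^2e^2x$, $G_3=A_nB_ncdey$, $G_4=A_nB_nC_nz$ and $x,y,z$ are the elements of Lemma~\ref{lem:nu(g)}, prime to $B_n,C_n,D_n$ respectively. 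As $(\nu(g_1),\dots,\nu(g_4))=(n^2\delta_n)(G_1,\dots,G_4)$, it suffices to prove the ideal equality $(G_1,G_2,G_3,G_4)=(\alpha_1\alpha_2\alpha_3)$ in $\Z[\zeta]$.

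Next I would record the local data. By Lemma~\ref{lem:p=0,1} and $5\nmid n$, each rational prime $p\mid\Delta_n$ satisfies $p\equiv1\pmod 5$, so it splits completely as $p=\pi^{(1)}\pi^{(2)}\pi^{(3)}\pi^{(4)}$ with $\pi^{(1)}\mid A_n,\ \pi^{(2)}\mid B_n,\ \pi^{(3)}\mid C_n,\ \pi^{(4)}\mid D_n$; by Lemma~\ref{lem:prime-ideal} the factors $A_n,B_n,C_n,D_n$ are pairwise coprime, so each $\pi^{(t)}$ divides exactly one of them. Writing $m:=v_p(\Delta_n)$ and using $A_nB_nC_nD_n=\Delta_n$ together with pairwise coprimality, I get $v_{\pi^{(1)}}(A_n)=v_{\pi^{(1)}}(\Delta_n)=m$ while $v_{\pi^{(2)}}(A_n)=v_{\pi^{(3)}}(A_n)=v_{\pi^{(4)}}(A_n)=0$, and the analogous statements for $B_n,C_n,D_n$. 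Since $\alpha_1\mid A_n$ with $N_{\Q(\zeta)/\Q}(\alpha_1)=bc^2d^3e^3$ (Lemma~\ref{lem:alpha123}), the only nonzero valuation of $\alpha_1$ over $p$ is $v_{\pi^{(1)}}(\alpha_1)=v_p(bc^2d^3e^3)$; likewise $v_{\pi^{(2)}}(\alpha_2)=v_p(bcd^2e^2)$ and $v_{\pi^{(3)}}(\alpha_3)=v_p(cde)$ are the only nonzero valuations of $\alpha_2,\alpha_3$ over $p$. Hence $\alpha_1\alpha_2\alpha_3$ has valuations $v_p(bc^2d^3e^3),\,v_p(bcd^2e^2),\,v_p(cde),\,0$ at $\pi^{(1)},\pi^{(2)},\pi^{(3)},\pi^{(4)}$, and valuation $0$ at every prime over no $p\mid\Delta_n$.

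The core of the proof is the prime-by-prime minimum for $\gcd(G_1,\dots,G_4)$. At a prime $\mathfrak q$ lying over no $p\mid\Delta_n$ one has $v_\mathfrak q(G_1)=v_\mathfrak q(bc^2d^3e^3)=0$, so $v_\mathfrak q(\gcd)=0$, matching $\alpha_1\alpha_2\alpha_3$ and absorbing any extraneous factors of $x,y,z$. At the four primes over a fixed $p\mid\Delta_n$, I would read off $v_{\pi^{(t)}}(G_i)$ from the valuations recorded above together with $v_{\pi^{(2)}}(x)=v_{\pi^{(3)}}(y)=v_{\pi^{(4)}}(z)=0$: the minimum is attained by $G_1$ at $\pi^{(1)}$, by $G_2$ at $\pi^{(2)}$, by $G_3$ at $\pi^{(3)}$, and by $G_4$ at $\pi^{(4)}$ (where $v_{\pi^{(4)}}(G_4)=0$ since $\pi^{(4)}$ divides none of $A_n,B_n,C_n$ and $v_{\pi^{(4)}}(z)=0$), each giving exactly the valuation of $\alpha_1\alpha_2\alpha_3$. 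That the remaining generators do not undercut these minima follows from the integer divisibilities $cde\mid bcd^2e^2\mid bc^2d^3e^3\mid\Delta_n$, which give $v_p(cde)\le v_p(bcd^2e^2)\le v_p(bc^2d^3e^3)\le m$, together with the fact that whenever $G_i$ contains $A_n$, $B_n$, or $C_n$ at the prime dividing that factor it acquires valuation $\ge m\ge v_p(bc^2d^3e^3)$. Matching valuations at every prime of $\Z[\zeta]$ gives $(G_1,\dots,G_4)=(\alpha_1\alpha_2\alpha_3)$, and restoring $n^2\delta_n$ yields $\nu(I)=(n^2\delta_n\alpha_1\alpha_2\alpha_3)$.

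The main obstacle I expect is the valuation bookkeeping at the four primes over each $p\mid\Delta_n$: one must simultaneously use the exact off-diagonal vanishing of the valuations of $A_n,B_n,C_n,D_n$ (pairwise coprimality) and the exact vanishing $v_{\pi^{(2)}}(x)=v_{\pi^{(3)}}(y)=v_{\pi^{(4)}}(z)=0$ from Lemma~\ref{lem:nu(g)}, which are precisely what force $G_2,G_3,G_4$ to attain the claimed minima, against the uniform upper bound $v_p(bc^2d^3e^3)\le m$ that keeps the other generators from undercutting them. Verifying that these fit together for every value of $v_p(\Delta_n)$ (that is, for $p$ dividing $a$, $b$, $c$, $d$, or $e$, and allowing $p$ to divide $e$ simultaneously) is where the care is needed.
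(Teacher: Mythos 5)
Your proposal is correct and follows essentially the same route as the paper: after discarding $\nu(g_0)=0$ and factoring out $n^2\delta_n$ from $\nu(g_1),\dots,\nu(g_4)$ via Lemma~\ref{lem:nu(g)}, everything reduces to the ideal identity $(bc^2d^3e^3,\,A_nbcd^2e^2x,\,A_nB_ncdey,\,A_nB_nC_nz)=(\alpha_1\alpha_2\alpha_3)$, which the paper simply asserts. Your prime-by-prime valuation check (using the pairwise coprimality of $A_n,B_n,C_n,D_n$, the coprimality of $x,y,z$ to $B_n,C_n,D_n$, and the chain $cde\mid bcd^2e^2\mid bc^2d^3e^3\mid\Delta_n$) is exactly the verification the paper leaves implicit, and it is carried out correctly.
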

\begin{proof}\
From (\ref{eq:nu-I}) and Lemma~\ref{lem:nu(g)},  we have
\begin{align}
\nu (I) & = (\nu (g_0), \nu (g_1), \nu (g_2) ,\nu(g_3), \nu (g_4)) \label{eq:fact-nu(I)}  \\
& = (n^2 \delta_n )(bc^2d^3e^3, A_n bcd^2e^2x, A_nB_n cdey, A_nB_nC_nz)  \notag
\end{align}
with $x,y$ and $z \, (\in \Z [\zeta])$ being prime to $B_n, C_n$ and $D_n$, respectively.
Since $\alpha_1 \alpha_2 \alpha_3$ divides all $
bc^2d^3e^3, A_n bcd^2e^2x, A_nB_n cdey, A_nB_nC_nz$, and the ideal generated by 
\[bc^2d^3e^3/(\alpha_1 \alpha_2 \alpha_3),\ A_n bcd^2e^2x/(\alpha_1 \alpha_2 \alpha_3),\ 
A_nB_n cdey/(\alpha_1 \alpha_2 \alpha_3),\ A_nB_nC_nz/(\alpha_1 \alpha_2 \alpha_3)
\]
is equal to $(1)={\mathcal O}_{K_n}$, 
we have 
\[
(bc^2d^3e^3, A_n bcd^2e^2x, A_nB_n cdey, A_nB_nC_nz)=(\alpha_1 \alpha_2 \alpha_3),
\]
and it follows that $\nu (I)=(n^2 \delta_n \alpha_1 \alpha_2 \alpha_3)$.
\end{proof}
\begin{theo}\label{theo:main}
Let $n$ be an integer satisfying $5 \nmid n$ and  $\alpha_1, \alpha_2, \alpha_3 \, (\in \Z[\zeta])$ the elements
given in Lemma~\ref{lem:alpha123}.
 Put $\alpha_1 \alpha_2 \alpha_3 =\beta_0 +\beta_1 \zeta+\beta_2 \zeta^2+\beta_3 \zeta^3$ with
$\beta_0,\beta_1,\beta_2,\beta_3 \in \Z$ and 
\[
m:=\dfrac{1}{5} \left(\left( \frac{n}{5} \right) bc^2d^3e^4-n^2 (\beta_0+\beta_1+\beta_2+\beta_3) \right) \quad  (\in \Z).
\]
Then
\[
\dfrac{1}{bc^2d^3e^4} (\beta_0 \rho +\beta_1 \rho^{(1)}+\beta_2 \rho^{(2)} +\beta_3 \rho^{(3)} -m)
\]
is a generator of a normal integral basis of Emma Lehmer's  quintic field $K_n$.
\end{theo}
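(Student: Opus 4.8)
The plan is to realise the asserted element as $g.(\rho/\ell)$ for an explicit $g\in\Z[G]$ and then to identify the $\Z[G]$-ideal it generates with $I=(g_0,\dots,g_4)$. Write $\tilde g:=\beta_0+\beta_1\sigma+\beta_2\sigma^2+\beta_3\sigma^3$ and let $N:=1+\sigma+\sigma^2+\sigma^3+\sigma^4$ be the norm element. Then $\tilde g.\rho=\beta_0\rho+\beta_1\rho^{(1)}+\beta_2\rho^{(2)}+\beta_3\rho^{(3)}$, while $N.(\rho/\ell)=\Tr_{K_n/\Q}(\rho)/\ell=-n^2/\ell=1/(bc^2d^3e^4\delta_n)$. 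A direct substitution then shows that, setting $g:=-\delta_n(n^2\tilde g+mN)\in\Z[G]$,
\[
g.\tfrac{\rho}{\ell}=\tfrac{1}{bc^2d^3e^4}\bigl(\beta_0\rho+\beta_1\rho^{(1)}+\beta_2\rho^{(2)}+\beta_3\rho^{(3)}-m\bigr),
\]
which is exactly the candidate generator $\alpha$. Since $5\nmid n$ forces $n\ne0$, Lemma~\ref{lem:NB} gives $\mathrm{Ann}_{\Z[G]}(\rho/\ell)=0$, so $h\mapsto h.(\rho/\ell)$ is injective; in view of \eqref{eq:O_Ln} it follows that $\alpha$ generates an NIB if and only if $(g)=I$ as ideals of $\Z[G]$.

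To prove $(g)=I$ I would compare the two ideals after applying the two natural projections out of the order $\Z[G]$, namely the augmentation $\epsilon\colon\Z[G]\to\Z$, $\sigma\mapsto1$, and the map $\nu\colon\Z[G]\to\Z[\zeta]$, $\sigma\mapsto\zeta$, which together realise $\Z[G]$ as the fibre product $\Z\times_{\mathbb{F}_5}\Z[\zeta]$ over reduction modulo the prime $(1-\zeta)$ above $5$. On the $\nu$-side, $\nu(N)=1+\zeta+\zeta^2+\zeta^3+\zeta^4=0$ and $\nu(\tilde g)=\alpha_1\alpha_2\alpha_3$ give $\nu(g)=-n^2\delta_n\alpha_1\alpha_2\alpha_3$, which generates $\nu(I)$ by Proposition~\ref{prop:gen-nu}. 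On the $\epsilon$-side, $\epsilon(N)=5$ and $\epsilon(\tilde g)=\beta_0+\beta_1+\beta_2+\beta_3$ give $\epsilon(g)=-\delta_n\bigl(n^2(\beta_0+\beta_1+\beta_2+\beta_3)+5m\bigr)$; substituting the definition of $m$ collapses this to $\epsilon(g)=-(\tfrac n5)\delta_n\,bc^2d^3e^4=\pm\delta_n bc^2d^3e^4$. (As a corroborating check, the same cancellation yields $\Tr_{K_n/\Q}(\alpha)=-(\tfrac n5)=\pm1$, the value one expects for the trace of an NIB generator.) I would then establish $\epsilon(I)=(\delta_n bc^2d^3e^4)$ by evaluating the $\epsilon(g_i)$ from Lemma~\ref{lem:phi-to-rho} and taking their $\gcd$, using that every prime dividing $\Delta_n$ is $\equiv1\pmod5$, hence $a,b,c,d,e\equiv1\pmod5$.

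Finally, I would glue the two components. Because an NIB exists (Hilbert--Speiser, as $5\nmid n$), $I$ is principal, hence a locally free rank-one $\Z[G]$-module, so by the conductor-square (Milnor patching) description of $\Z[G]=\Z\times_{\mathbb{F}_5}\Z[\zeta]$ it is recovered from the pair $(\epsilon(I),\nu(I))$ together with the gluing datum at $(1-\zeta)$. Having matched $\nu(g)$ and $\epsilon(g)$ with generators of $\nu(I)$ and $\epsilon(I)$, it remains to check that these two generators are compatible modulo the conductor, i.e. that a fixed generator $h$ of $I$ satisfies $g=uh$ with $u\in\Z[G]^\times$; here the normalisation $\alpha_i\equiv1\pmod{(1-\zeta)}$ from Lemma~\ref{lem:alpha123} and the Legendre-symbol sign in $m$ are used, via Euler's criterion $(\tfrac n5)\equiv n^2\pmod5$ together with $bc^2d^3e^4\equiv1\pmod5$, to force $\epsilon(g)\equiv\nu(g)\pmod{(1-\zeta)}$ with the correct sign. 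The resulting unit lies in $\Z[G]^\times$ as described in \eqref{eq:ZG}, whence $(g)=I$. The main obstacle is precisely this last step: matching the $\epsilon$- and $\nu$-generators at the single singular prime $(1-\zeta)$ of the non-maximal order $\Z[G]$, where the carefully chosen units $u_{\lambda_t}$ and the factor $(\tfrac n5)$ are indispensable, while the common factors $\delta_n$ and $bc^2d^3e^4$ cancel in $\nu(g)/\nu(h)$ so that the genuine content is the congruence bookkeeping modulo $5$.
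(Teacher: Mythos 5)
Your proposal is correct in substance and rests on the same engine as the paper's proof --- the identification of $\Z[G]$ with the fibre product $\Z\times_{\mathbb F_5}\Z[\zeta]$ over the prime above $5$, Proposition~\ref{prop:gen-nu} for the $\nu$-component, and the mod-$5$ bookkeeping coming from $\alpha_i\equiv 1\pmod{(1-\zeta)}$, $b\equiv c\equiv d\equiv e\equiv 1\pmod 5$ and Euler's criterion $n^2\equiv\left(\frac n5\right)\pmod 5$ --- but it packages the endgame differently. The paper never compares ideals componentwise: it writes $\nu(g)=v\delta_n n^2\beta$ with $v\in\Z[\zeta]^\times$, lifts $v^{-1}v_0$ to a unit $\xi\in\Z[G]^\times$ by the Acciaro--Fieker theorem, deduces $\xi g-\tau x\in\ker\nu=(1+\sigma+\cdots+\sigma^4)$, and then pins down the resulting integer $m'$ and eliminates the case $v\equiv\pm2\pmod{(1-\zeta)}$ by taking the trace of $\xi.\alpha$ (which is $\pm1$ for an NIB generator). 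You instead compare the two principal ideals $(g)$ and $I$ through the pair of projections $(\epsilon,\nu)$ and glue; this is a clean and legitimate alternative, and your verification that $\epsilon(g)/\epsilon(h)$ and $\nu(g)/\nu(h)$ agree modulo $(1-\zeta)$ is exactly the point where the sign $\left(\frac n5\right)$ in $m$ is forced. The one step you leave undone is $\epsilon(I)=(\delta_n bc^2d^3e^4)$: computing $\gcd_i\,\epsilon(g_i)$ from Lemma~\ref{lem:phi-to-rho} is feasible but requires checking divisibility of each $\epsilon(g_i)$ by $\delta_n bc^2d^3e^4$ (note $\epsilon(g_0)=5\delta_n bc^2d^3e^4$ alone only bounds the gcd up to a factor of $5$). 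It is shorter to observe that $I=(h)$ with $h.(\rho/\ell)$ an NIB generator, and tameness gives $\mathrm{Tr}(h.(\rho/\ell))=\pm1$, hence $\epsilon(h)=\pm\delta_n bc^2d^3e^4$ directly --- which is precisely the trace identity the paper exploits, and which you already record as a ``corroborating check''; promoting it to the definition of $\epsilon(I)$ closes the only gap in your argument.
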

\begin{proof}\
Let $x:=\delta_n n^2 (\beta_0 +\beta_1 \sigma +\beta_2 \sigma^2+\beta_3 \sigma^3) \, (\in \Z[G])$ and  $\beta :=
\alpha_1 \alpha_2 \alpha_3=\beta_0+\beta_1 \zeta+\beta_2\zeta^2+\beta_3\zeta^3 \,
(\in \Z [\zeta])$. We have $\nu (x) =\delta_n n^2 \beta$. Furthermore,  from Proposition~\ref{prop:gen-nu}, 
for the generator $g$ of the ideal $I=(g)=(g_0,g_1,g_2,g_3,g_4)$, we have
$\nu (g)=v\delta_n n^2 \beta$ with $v\in \Z [\zeta]^{\times}$. Put 
\[
v_0 := \begin{cases}
1 & (v\equiv \pm 1 \pmod{ (1-\zeta)}, \\
\dfrac{1-\zeta^2}{1-\zeta}=1+\zeta & (v\equiv \pm 2 \pmod{(1-\zeta)}.
\end{cases}
\]
Since $v_0 \, (\in \Z [\zeta]^{\times})$ satisfies $v^{-1} v_0 \equiv \pm 1 \pmod{(1-\zeta)}$,
there exists $\xi \in \Z [G]^{\times}$ satisfying $\nu (\xi)=v^{-1} v_0 $ (\cite[p133, Theorem~1.6]{AF}).
Put 
\[
\tau := \begin{cases}
1 & (v\equiv \pm 1 \pmod{ (1-\zeta)},\\
1+\sigma &  (v\equiv \pm 2 \pmod{(1-\zeta)}.
\end{cases}
\]
Then we have
\[
\nu (\xi g)=\nu (\xi)\nu(g)=v_0 \delta_n n^2 \beta=\nu (\tau) \nu (x)=\nu (\tau x),
\]
and hence 
\[
\xi g-\tau x \ \in \text{Ker}(\nu)=(1+\sigma+\sigma^2+\sigma^3+\sigma^4),
\]
(\cite[Theorem~1.4]{AF}). 
It follows  that there exists $m' \in \Z$ satisfying
\begin{equation}\label{eq:rho/ell}
(\xi g- \tau x) \cdot \dfrac{\rho}{\ell} =m' \text{Tr} \left(\dfrac{\rho}{\ell} \right),
\end{equation}
where $\ell =-bc^2d^3e^4 \delta_n n^2$ and Tr is the trace map from $K_n$ to $\Q$.
Let $\alpha =g\cdot (\rho/\ell)$ be the generator of an NIB in  (\ref{eq:alpha-g}). From (\ref{eq:rho/ell}) we have 
\begin{align}
\xi \cdot \alpha & =(\xi g)\cdot \frac{\rho}{\ell} \notag \\
& =(\tau x)\cdot \frac{\rho}{\ell }+m'\mathrm{Tr}\left(\frac{\rho}{\ell}\right) 
\label{eq:xi-alpha}\\
& = \dfrac{n^2}{\ell} (\delta_n \tau \cdot  (\beta_0 \rho +\beta_1 \rho^{(1) }+\beta_2 \rho^{(2)} +\beta_3 \rho^{(3)})-m' ) . \notag
\end{align}
Since $\xi \cdot \alpha$ is a generator of an NIB, we have Tr$(\xi \cdot \alpha)=\pm 1$. 
Furthermore, we have Tr$(\rho^{(i)}) =-n^2\, (i \in \Z/5\Z)$. Taking the traces on both sides of (\ref{eq:xi-alpha}) and multiplying
them by $bc^2d^3e^4 \delta_n$,
it follows that
\begin{equation}\label{eq:trace-xi-alpha}
\pm bc^2d^3e^4 \delta_n =\delta_n sn^2 (\beta_0+\beta_1+\beta_2+\beta_3)+5m',\ s:=\begin{cases}
1 & (v\equiv \pm 1 \pmod{(1-\zeta)}), \\
2 & (v\equiv \pm 2 \pmod{(1-\zeta)}).
\end{cases}
\end{equation}
We can write $m'=\delta_n m$ with $m \in \mathcal O_{K_n}$ from (\ref{eq:xi-alpha}), and  $m=m'/\delta_n \in \Q \cap \mathcal O_{K_n }=\Z$.
It follows from (\ref{eq:trace-xi-alpha}) that 
\begin{equation}\label{eq:trace-xi-alpha-m}
\pm bc^2d^3e^4=sn^2(\beta_0 +\beta_1+\beta_2+\beta_3)+5m.
\end{equation}
Since $\alpha_1 \equiv \alpha_2 \equiv \alpha_3\equiv 1
\pmod{(1-\zeta)}$, we obtain
\[
\beta_0+\beta_1+\beta_2+\beta_3 \equiv \alpha_1\alpha_2\alpha_3 \equiv 1 \pmod{(1-\zeta)}.
\]
From $\beta_0 , \beta_1,\beta_2,\beta_3 \in \mathbb Z$, we have $\beta_0+\beta_1+\beta_2+\beta_3 \equiv 1\pmod{5}$,
and hence we obtain  $s\equiv \pm 1\pmod{5}$ from  $b\equiv c\equiv d\equiv e\equiv 1 \pmod{5}$ \cite[Lemma~2.1.1]{Je} and  (\ref{eq:trace-xi-alpha-m}). 
Therefore, we have $s=1$ ($v\equiv \pm 1 \pmod{(1-\zeta)}$ and $\tau=1$), and if we let $\varepsilon =\pm 1 $ be the
sign on the left-hand side of (\ref{eq:trace-xi-alpha-m}), then we have $\varepsilon \equiv n^2 \pmod{5}$, which implies $\varepsilon=\left(\frac{n}{5} \right)$.
We  conclude from (\ref{eq:trace-xi-alpha-m})
that
\[
m=\frac{1}{5} \left( \left( \frac{n}{5} \right)  bc^2d^3e^4 -n^2(\beta_0+\beta_1+\beta_2+\beta_3)\right) \ (\in \Z).
\]
From (\ref{eq:xi-alpha}) and $m'=\delta_n m$, it follows that $-(\beta_0 \rho +\beta_1 \rho^{(1)} +\beta_2 \rho^{(2)} +\beta_3 \rho^{(3)} -m)
/bc^2d^3e^4$ and its $-1$ times are both generators of an NIB.
The proof is complete.
\end{proof}
\begin{rem}\label{rem:alpha123}
For the integers $\beta_0,\beta_1,\beta_2,\beta_3$ in Theorem~\ref{theo:main}, we have 
$N_{\Q (\zeta)/\Q} (\beta_0+\beta_1 \zeta +\beta_2 \zeta^2 +\beta_3 \zeta^3)=N_{\Q (\zeta)/\Q}
(\alpha_1\alpha_2\alpha_3)=b^2c^4d^6e^6$ from Lemma~\ref{lem:alpha123}.
\end{rem}
\begin{cor}\label{cor:NIB-squarefree}
Let $n$ be an integer satisfying $5 \nmid n$.   Then $K_n$ has a generator of a normal integral basis of the form $v+w \rho \ (v,w\in \Z)$
if and only if  $\Delta_n$ is square-free. Furthermore, in this case, the integers $v$ and $w$ are given by 
 $w=\pm1,\, v=w\left(n^2-\left(\frac{n}{5} \right)\right)/5$.
\end{cor}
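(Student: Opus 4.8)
The plan is to prove the two implications separately, deriving the ``if'' direction directly from Theorem~\ref{theo:main} and obtaining the ``only if'' direction together with the exact values of $v$ and $w$ from a discriminant computation. For the ``if'' direction I would specialize Theorem~\ref{theo:main} to the square-free case, where $b=c=d=e=1$ and $\Delta_n=a$. Then in Lemma~\ref{lem:alpha123} each of the integers $bc^2d^3e^3$, $bcd^2e^2$, $cde$ equals $1$, so every $\lambda_t$ is an empty product and $\alpha_1=\alpha_2=\alpha_3=1$; hence $\beta_0=1$, $\beta_1=\beta_2=\beta_3=0$ and $m=\tfrac15\big(\big(\tfrac{n}{5}\big)-n^2\big)$. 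Substituting into Theorem~\ref{theo:main} gives the generator $\rho-m=\rho+\tfrac15\big(n^2-\big(\tfrac{n}{5}\big)\big)$, which has the form $v+w\rho$ with $w=1$ and $v=\big(n^2-\big(\tfrac{n}{5}\big)\big)/5$; applying the unit $-1\in\Z[G]^{\times}$ yields the companion generator with $w=-1$.

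For the ``only if'' direction I would use the criterion that $\{\,v+w\rho^{(i)}\mid i\in\Z/5\Z\,\}$ is an integral basis exactly when it lies in $\mathcal O_{K_n}$ and its discriminant equals $D_{K_n}=\mathfrak f_{K_n}^4$. The matrix $\big(\sigma^j(v+w\rho^{(i)})\big)_{i,j}=\big(v+w\rho^{(i+j)}\big)_{i,j}$ is a (reversed) circulant, which I would diagonalize by the Lagrange resolvents $\lambda_k:=\sum_i\rho^{(i)}\zeta^{ik}$: its determinant is $\pm(5v-wn^2)\,w^4\prod_{k=1}^4\lambda_k$, where the factor $5v-wn^2$ comes from the eigenvalue $5v+w\lambda_0$ with $\lambda_0=\sum_i\rho^{(i)}=-n^2$. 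Squaring and using $d(\rho^{(0)},\dots,\rho^{(4)})=\prod_{k=0}^4\lambda_k^2$ (as in the proof of Lemma~\ref{lem:NB}) gives
\[
d\big(v+w\rho,\dots,\sigma^4(v+w\rho)\big)=\frac{(5v-wn^2)^2\,w^8}{n^4}\,d\big(\rho^{(0)},\dots,\rho^{(4)}\big).
\]

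The key auxiliary input is the normal-basis discriminant $d(\rho^{(0)},\dots,\rho^{(4)})$. Using Lemma~\ref{lem:powers} to form the transition matrix $Q$ from $\{1,\rho,\rho^2,\rho^3,\rho^4\}$ to $\{\rho^{(0)},\dots,\rho^{(4)}\}$ together with $1=-\tfrac1{n^2}\sum_i\rho^{(i)}$, a direct computation gives $\det Q=\pm\delta_n/n^2$, whence $d(\rho^{(0)},\dots,\rho^{(4)})=d(f_n)/(\det Q)^2=\delta_n^2\Delta_n^4\cdot n^4/\delta_n^2=n^4\Delta_n^4$. Substituting, the condition $d(\alpha,\dots)=\mathfrak f_{K_n}^4$ becomes $(5v-wn^2)^2w^8=(\mathfrak f_{K_n}/\Delta_n)^4=1/(bc^2d^3e^5)^4$. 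Since $w\neq0$ and $5v-wn^2\neq0$ (otherwise the discriminant vanishes), the left-hand side is a positive integer, forcing both sides to equal $1$; thus $bc^2d^3e^5=1$, i.e. $b=c=d=e=1$ and $\Delta_n$ is square-free, while $w^8=1$ and $(5v-wn^2)^2=1$ give $w=\pm1$, and reducing $5v-wn^2=\pm1$ modulo $5$ via $n^2\equiv\big(\tfrac{n}{5}\big)\pmod5$ pins down $v=w\big(n^2-\big(\tfrac{n}{5}\big)\big)/5$.

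The hard part will be the explicit evaluation $d(\rho^{(0)},\dots,\rho^{(4)})=n^4\Delta_n^4$: everything else is linear algebra over the circulant structure and elementary divisibility bookkeeping. The determinant $\det Q$ is a polynomial identity in $n$ read off from Lemma~\ref{lem:powers}, and although it is routine in principle, confirming the clean cancellation down to $\pm\delta_n/n^2$ is the one genuinely laborious step that I expect to require care.
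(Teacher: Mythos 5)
Your proposal is correct and follows essentially the same route as the paper: the ``if'' direction by specializing Theorem~\ref{theo:main} to $b=c=d=e=1$ (so $\alpha_1=\alpha_2=\alpha_3=1$, $\beta_0=1$, $\beta_1=\beta_2=\beta_3=0$), and the ``only if'' direction from the identity $d(v+w\rho,\ldots,v+w\rho^{(4)})=w^8(5v-wn^2)^2\Delta_n^4$ compared with $D_{K_n}=\mathfrak f_{K_n}^4$. The paper simply asserts that discriminant identity as a direct calculation, whereas you derive it via the circulant eigenvalues and the transition-matrix determinant $\pm\delta_n/n^2$ (equivalently $d(\rho^{(0)},\ldots,\rho^{(4)})=n^4\Delta_n^4$, which is consistent with the paper's formula at $w=1$, $v=0$); your mod~$5$ argument pinning down $v$ is also the intended one.
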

\begin{proof}\
First, we assume that  $\Delta_n$ is square-free, then we have $b=c=d=e=1$, and hence $\alpha_1=1,\, \alpha_2=1$ and $\alpha_3=1$ 
from Lemma~\ref{lem:alpha123}. Therefore $\beta_0, \beta_1, \beta_2$ and $\beta_3$ in Theorem~\ref{theo:main} are given by $\beta_0=1,\, \beta_1=\beta_2=\beta_3=0$.
Therefore, the integer $m$ in Theorem~\ref{theo:main} is given by $m=\left( \left( \frac{n}{5} \right)-n^2 \right)/5$. It follows from Theorem~\ref{theo:main} that $\alpha:=\left(  n^2 -\left(\frac{n}{5} \right) \right)/5 +\rho$ is a generator of an NIB of $K_n$. 

Next, we assume that $K_n$ has a generator of an NIB of the form $v+w\rho \ (v,w \in \Z)$. Since
\[
d (v+w\rho, v+w \rho^{(1)}, v+w\rho^{(2)} , v+w \rho^{(3)}, v+w \rho^{(4)}) =w^8 (5v-wn^2)^2 \Delta_n^4
\]
and (\ref{eq:cond}), (\ref{eq:disc}),  it follows that $w^8=(5v-wn^2)^2=1$ and $\Delta_n$ is square-free. 
\end{proof}
\begin{ex}\label{ex:1to1000}

Table~\ref{table:1000} shows a generator of an NIB of $K_n$ such that $n$ satisfies $1\leq n\leq 1000,\, 5\nmid n$ and
$\Delta_n$ is not square-free.
For example, for $n=14$, $\Delta_n=11 \cdot 71^2$ and $\alpha_1,\alpha_2,\alpha_3 \, (\in \mathbb Z[\zeta])$ in
 Theorem~\ref{theo:main}
are given by $\alpha_1=3\zeta^2 + \zeta + 2 , \, \alpha_2=\zeta^3 + 3\zeta + 2 ,\, \alpha_3=  1 $. Since 
$\alpha_1 \alpha_2 \alpha_3= 10 \zeta^3 + 8 \zeta^2 + 7\zeta + 6$, we have $\beta_0=6,\, \beta_1=7,\, \beta_2=8,\, 
\beta_3=10$ and $m=\left(\left( \frac{n}{5} \right) bc^2d^3e^4-n^2 (\beta_0+\beta_1+\beta_2+\beta_3) \right)/5=-1201$.
Therefore, $(6\rho+7\rho^{(1)} +8\rho^{(2)}
+10\rho^{(3)} +1201)/71$
is an NIB of $K_n $ for $n=14$  by Theorem~\ref{theo:main}.
\end{ex}
\begin{ex}\label{ex:d-not-1}
The smallest positive integer with $d \ne 1$ is $n=2888$.
In this case, we have $\Delta_n=11^4 \cdot 4759595441$ and $\mathfrak f_{K_n}=11 \cdot 4759595441$,
and $ (-16\rho-6\rho^{(1)} -26\rho^{(2)}
-41\rho^{(3)} -148461417)/11^3$
is an NIB of $K_n $ .
\end{ex}
\begin{ex}\label{ex:e-not-1}
The smallest positive integer with $e \ne 1$ is $n=7721$.
In this case, we have $\Delta_n=11^5\cdot 26501 \cdot 833201$ and $\mathfrak f_{K_n}=26501\cdot 833201$,
and $ (-10\rho+6\rho^{(1)} -35\rho^{(2)}
-20\rho^{(3)} -703446252)/11^4$
is an NIB of $K_n $ .
\end{ex}
\begin{ex}\label{two-not-1}
The smallest positive integer for which two of $b,c,d$ and $e$ are not $1$ is  $n=40846$.
In this case, we have $\Delta_n=11^4 \cdot 31^2\cdot 197859618251$ and $\mathfrak f_{K_n}=11 \cdot 31 \cdot 197859618251$,
and $(-211\rho-96\rho^{(1)} -158\rho^{(2)}
-14\rho^{(3)} -159832317845)/(11^3 \cdot 31)$
is an NIB of $K_n $ .
\end{ex}
\begin{table}[H]
\caption{$1\leq n\leq 1000,\, 5\nmid n$ and $\Delta_n$ is not square-free }
\label{table:1000}
 \begin{center}
{\scriptsize
 \begin{tabular}{|c||c|c|c|}
 \hline
\rule{0pt}{4mm}   $n$ & $\Delta_n$  & $\mathfrak f_{K_n}$ & A generator of NIB \\  \hline  \hline 
\rule{0pt}{4mm}   $14$ & $11\cdot71^2$ & $11\cdot 71$ &  $\frac{1}{71} (6\rho+7\rho^{(1)} +8\rho^{(2)}
+10\rho^{(3)} +1201)$ \\  \hline
\rule{0pt}{4mm}   $44$  & $61 \cdot 41^3$ &   $41\cdot 61$& 
$\frac{1}{ 41^2} ( 28\rho +39 \rho^{(1)} +36\rho^{(2)} +48\rho^{(3)}+58131 ) $ \\ \hline
\rule{0pt}{4mm}   $69$ & $201511\cdot 11^2$  & $11\cdot 201511 $ & 
$\frac{1}{11} (\rho-3\rho^{(1)} -\rho^{(2)}
-\rho^{(3)} -3811)$ \\ \hline
\rule{0pt}{4mm}   $71$ &  $7331\cdot 61^2$ & $61 \cdot 7331$ & 
$\frac{1}{61} (-3\rho -2\rho^{(2)}
+6\rho^{(3)} +996)$  \\ \hline
\rule{0pt}{4mm}   $83$ & $11\cdot 2141^2$  & $11 \cdot 2141$ & 
$\frac{1}{2141} (16\rho +2 \rho^{(1)}-37\rho^{(2)}
+10\rho^{(3)} -11972)$ \\ \hline
 \rule{0pt}{4mm}   $86$  &  $31 \cdot 15461\cdot 11^2$ &   $11 \cdot 31 \cdot 15461$& 
$\frac{1}{11} (4\rho +3\rho^{(1)}+2\rho^{(2)}
+2\rho^{(3)} +16269)$ \\ \hline
 \rule{0pt}{4mm}   $98$ & $191\cdot4201\cdot 11^2$ &  $11\cdot191\cdot4201$ & 
$\frac{1}{11}  (2\rho -2\rho^{(2)}
+\rho^{(3)} +1923)$
\\ \hline
 \rule{0pt}{4mm}   $207$  & $15545731\cdot 11^2$ & $11\cdot 15545731$  & 
$\frac{1}{11} ( 4\rho +3 \rho^{(1)} +2\rho^{(2)} +2\rho^{(3)} +94270) $ \\ \hline
 \rule{0pt}{4mm}   $219$ & $19450411\cdot 11^2$ &  $11\cdot 19450411$ &
$\frac{1}{11} ( 2\rho  -2 \rho^{(2)} +\rho^{(3)} +9590) $  \\ \hline
\rule{0pt}{4mm}    $226$  & $101\cdot19841\cdot 11^3$  &  $11\cdot 101 \cdot 19841$ & 
$\frac{1}{11^2} ( -2\rho -9 \rho^{(1)} -6\rho^{(2)} -12\rho^{(3)} -296265) $ \\ \hline
 \rule{0pt}{4mm}   $276$  & $61\cdot100801\cdot 31^2$ & $31 \cdot 61 \cdot 100801$ & 
$\frac{1}{31} ( \rho  -2\rho^{(1)} -2\rho^{(2)} +4\rho^{(3)} +15229) $   \\ \hline
 \rule{0pt}{4mm}   $311$ & $131\cdot 461\cdot1301\cdot 11^2$  & $11\cdot 131 \cdot 461 \cdot 1301$ &  
$\frac{1}{11} ( \rho-3  \rho^{(1)} -\rho^{(2)} -\rho^{(3)} -77379) $  \\ \hline
 \rule{0pt}{4mm}   $328$ &  $97127081\cdot 11^2$ & $11 \cdot 97127081$ &  
$\frac{1}{11} ( 4\rho  +3\rho^{(1)} +2\rho^{(2)} +2\rho^{(3)} +236687) $  \\ \hline
 \rule{0pt}{4mm}   $347$  & $121562411\cdot 11^2$ &  $11\cdot 121562411$ & 
$\frac{1}{11} ( 2\rho  +\rho^{(2)} -2\rho^{(3)}+24084 ) $   \\ \hline
\rule{0pt}{4mm}    $432$ &  $291193681 \cdot 11^2$ & $11\cdot 291193681$ & 
$\frac{1}{11} ( \rho  -3\rho^{(1)} -\rho^{(2)} -\rho^{(3)}-149297 ) $   \\ \hline
\rule{0pt}{4mm}    $449$  &  $30877981 \cdot 11^3$ & $11\cdot 30877981$ &  
$\frac{1}{ 11^2} ( 10\rho  +6\rho^{(1)} +12\rho^{(2)} +3\rho^{(3)}+1249902 ) $ \\ \hline
\rule{0pt}{4mm}    $461$  & $377340791\cdot 11^2$   & $11 \cdot 377340791$ &  
$\frac{1}{11} ( 2\rho  -2\rho^{(2)} +\rho^{(3)} +42502) $ \\ \hline
\rule{0pt}{4mm}    $468$ &  $400721701 \cdot 11^2$  &  $11 \cdot 400721701$ &  
$\frac{1}{11} ( 2\rho  +\rho^{(2)} -2\rho^{(3)} +43807) $ \\ \hline
\rule{0pt}{4mm}     $484$ & $131\cdot440431\cdot 31^2$  &  $31\cdot 131 \cdot 440431$&  
$\frac{1}{31} ( -\rho  +3\rho^{(1)} -3\rho^{(2)} -3\rho^{(3)} -187411) $ \\ \hline
\rule{0pt}{4mm}    $544$ & $41\cdot 2243281 \cdot 31^2$  &  $31\cdot 41 \cdot 2243281$ &  
$\frac{1}{31} ( 6\rho +3 \rho^{(1)} +2\rho^{(3)} +651053) $ \\ \hline
\rule{0pt}{4mm}     $553$ &  $779911631 \cdot 11^2$ &  $11 \cdot 779911631$ & 
$\frac{1}{11} ( \rho  -3\rho^{(1)} -\rho^{(2)} -\rho^{(3)} -244645) $ \\ \hline
\rule{0pt}{4mm}   $582$  & $31\cdot71\cdot571\cdot761 \cdot 11^2$  &  $11 \cdot 31 \cdot 71 \cdot 571 \cdot 761$ &  
$\frac{1}{11} ( 2\rho  -2\rho^{(2)} +\rho^{(3)}+67747 ) $ \\ \hline
\rule{0pt}{4mm}     $589$ &  $7151\cdot140281 \cdot 11^2$ & $11\cdot 7151 \cdot 140281$ &  
$\frac{1}{11} ( 2\rho +\rho^{(2)} -2\rho^{(3)} +69382) $ \\ \hline
\rule{0pt}{4mm}     $613$ &  $1091\cdot 135781\cdot 31^2$ & $31 \cdot 1091 \cdot 135781$ &  
$\frac{1}{31} ( -6\rho  -4\rho^{(1)} -6\rho^{(2)} -3\rho^{(3)} -1427916) $ \\ \hline
\rule{0pt}{4mm}     $674$ & $20411 \cdot 84181 \cdot 11^2$   & $11 \cdot 20411 \cdot 84181$ &  
$\frac{1}{11} ( \rho  -3\rho^{(1)} -\rho^{(2)} -\rho^{(3)} -363423) $ \\ \hline
\rule{0pt}{4mm}     $691$ &  $1897892411\cdot 11^2$ &  $11 \cdot 1897892411$& 
$\frac{1}{11} ( 4\rho  +3\rho^{(1)}+2 \rho^{(2)} +2\rho^{(3)} +1050456) $ \\ \hline
\rule{0pt}{4mm}     $703$ &  $61\cdot 101\cdot 311 \cdot 1061 \cdot 11^2$  &  $11\cdot 61 \cdot 101 \cdot 311 \cdot 1061$&  
$\frac{1}{11} ( 2\rho  -2\rho^{(2)} +\rho^{(3)} +98844) $  \\ \hline
\rule{0pt}{4mm}     $726$ &  $166407091\cdot 41^2$  &  $41 \cdot 166407091$ & 
$\frac{1}{41} ( -2\rho  -6\rho^{(1)} -4\rho^{(2)}-7\rho^{(3)} -2002897) $ \\ \hline
\rule{0pt}{4mm}     $812$ &  $48491 \cdot 74551 \cdot 11^2$ & $11 \cdot 48491 \cdot 74551$ & 
$\frac{1}{11} ( 4\rho  +3\rho^{(1)} +2\rho^{(2)} +2\rho^{(3)} +1450559) $ \\ \hline
\rule{0pt}{4mm}     $824$ &  $61\cdot 3271 \cdot 19211 \cdot 11^2$ & $11 \cdot 61 \cdot 3271 \cdot 19211$ & 
$\frac{1}{11} ( 2\rho  -2\rho^{(2)} +\rho^{(3)} +135793) $ \\ \hline
\rule{0pt}{4mm}     $831$ &  $41 \cdot 571 \cdot 169361 \cdot 11^2$ & $11\cdot 41 \cdot 571 \cdot 169361$ & 
$\frac{1}{11} ( 2\rho  +\rho^{(2)} -2\rho^{(3)} +138110) $ \\ \hline
\rule{0pt}{4mm}     $916$ & $ 31\cdot 17155921\cdot 11^3$  & $11\cdot 31\cdot 17155921$ &
$\frac{1}{ 11^2} (4 \rho -6 \rho^{(1)} -3\rho^{(2)}+6 \rho^{(3)} +167787) $ \\ \hline
\rule{0pt}{4mm}     $933$ & $101\cdot 62337371\cdot  11^2$  & $11 \cdot 101 \cdot 62337371$ & 
$\frac{1}{11} ( 4\rho +3 \rho^{(1)} +2\rho^{(2)} +2\rho^{(3)}+1915078 ) $ \\ \hline
\rule{0pt}{4mm}     $952$ & $101\cdot 811\cdot 83311 \cdot 11^2$  &  $11 \cdot 101 \cdot 811 \cdot 83311$& 
$\frac{1}{11} ( 2\rho  + \rho^{(2)} -2\rho^{(3)} +181263) $  \\ \hline
\end{tabular}
}
\end{center}
\end{table}
%
%
%
%
\section{All normal integral bases}

Let $\alpha$ be a generator of an NIB of $K_n$. Then the set of all generators of NIBs is $\{ \pm \sigma^{\ell} (1-\sigma^2-\sigma^3 )^k.\alpha 
\, |\, \ell \in \Z/5\Z, k\in \Z \}$ by (\ref{eq:ZG}).
Davis, Eloff, Spearman and Williams \cite{DES,ESW2} determined all NIBs of $K_n$ for $n=-1$ and parametrized them using the Fibonacci and Lucas numbers.
In this section, we follow their method and  find  all NIBs of $L_n$ for the general $n$. Let $\lambda := (3+\sqrt{5})/2 =( (1+\sqrt{5})/2)^2$ be the square of the fundamental unit
of $\Q (\sqrt{5})$, and $\overline{\lambda}:=(3-\sqrt{5})/2$ its conjugate.
For any integer $ k$, define the sequences $a_k, b_k$, and $c_k$ by
\begin{align}
a_k & := \frac{1}{5} ( (-1)^k +2 (\lambda^k +\overline{\lambda}^k ) ), \notag \\
b_k & := \frac{1}{2} (a_k-a_{k-1}), \label{eq:abc} \\
c_k & :=\frac{1}{2} (a_{k+1} -a_k) \quad  (=b_{k+1}). \notag 
\end{align}
For example,  $a_0=a_1=1,\, a_2=3,\, b_0=b_1=0,\, b_2=1,\, c_0=0,\, c_1=1,\, c_2=2 $.
Let $L_k$ be the Lucas number defined by $L_0:=2,\, L_1=1$ and $L_k=L_{k-1}+L_{k-2}\ (k\in \mathbb Z)$. We can show the following lemma   from the formula:
$L_k= ( (1+\sqrt{5})/2)^k +((1-\sqrt{5})/2)^k$.
\begin{lem}\label{lem:abc-L}
For any $k\in \Z$, we have $a_k =((-1)^k +2L_{2k})/5,\, b_k =((-1)^k +L_{2k-1})/5$ and $ c_k=((-1)^{k+1} +L_{2k+1} )/5$.
\end{lem}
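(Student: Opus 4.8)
The plan is to reduce all three identities to the Binet-type formula $L_m = \varphi^m + \psi^m$ for the Lucas numbers, where $\varphi := (1+\sqrt{5})/2$ and $\psi := (1-\sqrt{5})/2$, which is exactly the expression quoted just before the statement. The crucial observation is that $\lambda = \varphi^2$ and $\overline{\lambda} = \psi^2$: indeed $\varphi^2 = (1 + 2\sqrt{5} + 5)/4 = (3+\sqrt{5})/2 = \lambda$, and likewise $\psi^2 = (3-\sqrt{5})/2 = \overline{\lambda}$. This is already recorded in the displayed definition of $\lambda$ as $((1+\sqrt{5})/2)^2$. Consequently $\lambda^k + \overline{\lambda}^k = \varphi^{2k} + \psi^{2k} = L_{2k}$ for every $k \in \Z$, and substituting this into the definition of $a_k$ immediately yields $a_k = ((-1)^k + 2L_{2k})/5$, the first of the three formulas.

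For the remaining two I would simply insert the formula for $a_k$ into the definitions $b_k = (a_k - a_{k-1})/2$ and $c_k = (a_{k+1}-a_k)/2$ and simplify. Writing $b_k = \frac{1}{10}\bigl((-1)^k - (-1)^{k-1} + 2(L_{2k} - L_{2k-2})\bigr)$, the sign term collapses via $(-1)^k - (-1)^{k-1} = 2(-1)^k$, while the Lucas part collapses via the recurrence in the form $L_{2k} - L_{2k-2} = L_{2k-1}$; this gives $b_k = ((-1)^k + L_{2k-1})/5$. The computation for $c_k$ is identical after an index shift: $(-1)^{k+1} - (-1)^k = 2(-1)^{k+1}$ together with $L_{2k+2} - L_{2k} = L_{2k+1}$ produce $c_k = ((-1)^{k+1} + L_{2k+1})/5$.

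There is no genuine obstacle here; the argument is a direct calculation once $\lambda = \varphi^2$ has been noted. The only point requiring a little care is that $k$ ranges over all of $\Z$, so I would remark explicitly that the Lucas recurrence $L_m = L_{m-1} + L_{m-2}$ (equivalently $L_m - L_{m-2} = L_{m-1}$) and the Binet formula $L_m = \varphi^m + \psi^m$ both hold for all integers $m$, using $\varphi\psi = -1$ to control negative indices. With that understood every step above is valid for negative $k$ as well, which completes the verification of all three identities.
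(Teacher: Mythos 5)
Your proof is correct and follows exactly the route the paper indicates: the paper derives the lemma from the Binet-type formula $L_k=((1+\sqrt5)/2)^k+((1-\sqrt5)/2)^k$, which combined with $\lambda=((1+\sqrt5)/2)^2$ gives $\lambda^k+\overline{\lambda}^k=L_{2k}$, and the formulas for $b_k$ and $c_k$ then follow from the definitions and the Lucas recurrence, just as you write. Your explicit remark about validity for negative indices is a worthwhile detail the paper leaves implicit.
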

Using the above lemma, we can also prove the following lemma.
\begin{lem}\label{lem:abc-rel}
For any $k\in \Z$, we have $a_k,b_k,c_k \in \Z$ and $a_k=a_{-k},\, b_{-k}=-c_k,\, a_{k+1}-2a_k -2a_{k-1}+a_{k-2}=0,\, b_{k+1} -2b_k -2b_{k-1}+b_{k-2}=0,\,
c_{k+1}-2c_k -2c_{k-1}+c_{k-2}=0$.
\end{lem}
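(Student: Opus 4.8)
The plan is to exploit the closed form of the three sequences and reduce almost everything to elementary facts about $\lambda$. Writing $\varphi=(1+\sqrt{5})/2$, one has $\lambda=\varphi^2$ and $\overline{\lambda}=\overline{\varphi}^{\,2}$, so that $\lambda+\overline{\lambda}=3$ and $\lambda\overline{\lambda}=1$; in particular $\lambda$ and $\overline{\lambda}$ are the two roots of $x^2-3x+1$, and $\lambda^{-1}=\overline{\lambda}$. By definition $a_k=\tfrac15(-1)^k+\tfrac25\lambda^k+\tfrac25\overline{\lambda}^{\,k}$, while $b_k$ and $c_k$ are $\Q$-linear combinations of shifts of $a_k$. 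Hence all three sequences lie in the $\Q$-span of the three sequences $(-1)^k$, $\lambda^k$, $\overline{\lambda}^{\,k}$, and every assertion of the lemma except integrality will follow formally from this.

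I would first dispose of the two symmetry relations. From $\lambda\overline{\lambda}=1$ we get $\lambda^{-k}=\overline{\lambda}^{\,k}$ and $\overline{\lambda}^{\,-k}=\lambda^{k}$, while $(-1)^{-k}=(-1)^k$; substituting into the closed form for $a_{-k}$ interchanges the two middle terms and fixes the first, giving $a_{-k}=a_k$. The relation $b_{-k}=-c_k$ is then purely formal: by definition $b_{-k}=\tfrac12(a_{-k}-a_{-k-1})=\tfrac12(a_k-a_{k+1})=-\tfrac12(a_{k+1}-a_k)=-c_k$, where the middle equality uses $a_{-k}=a_k$ together with $a_{-k-1}=a_{(k+1)}$ applied to the symmetric sequence $a$.

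Next I would establish the three identical recurrences simultaneously. The key computation is the factorization $x^3-2x^2-2x+1=(x+1)(x^2-3x+1)$, whose roots are exactly $-1,\lambda,\overline{\lambda}$. Thus each of the sequences $(-1)^k,\lambda^k,\overline{\lambda}^{\,k}$ is annihilated by the linear recurrence operator $u\mapsto u_{k+1}-2u_k-2u_{k-1}+u_{k-2}$. Since this operator is shift-invariant, its kernel is a shift-stable $\Q$-vector space of sequences; as $a$ belongs to it, so do the shift $a_{k-1}$ and hence $b_k=\tfrac12(a_k-a_{k-1})$ and $c_k=b_{k+1}$. This yields the three recurrences at once, without any separate induction.

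Finally, for integrality I would invoke Lemma~\ref{lem:abc-L}, which already rewrites the sequences through Lucas numbers; it then suffices to check $(-1)^k+2L_{2k}\equiv 0$, $(-1)^k+L_{2k-1}\equiv 0$, and $(-1)^{k+1}+L_{2k+1}\equiv 0$ modulo $5$, all of which follow immediately from the fact that $L_k \bmod 5$ is periodic of period $4$ with values $2,1,3,4$. Alternatively, and self-containedly, one can argue from the recurrence just proved: the integer base values $a_0=a_1=1$, $a_2=3$ propagate via $a_{k+1}=2a_k+2a_{k-1}-a_{k-2}$ to give $a_k\in\Z$ for all $k\ge 0$, and then for all $k\in\Z$ by $a_k=a_{-k}$; reducing the same recurrence modulo $2$ gives $a_{k+1}\equiv a_{k-2}\pmod 2$, so all $a_k$ are odd, whence $a_k-a_{k-1}$ is even and $b_k,c_k\in\Z$. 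I expect this integrality step, specifically the parity/congruence bookkeeping for $b_k$ and $c_k$, to be the only part requiring genuine (if mild) care; the recurrences and symmetries are immediate consequences of the closed form and the factorization above.
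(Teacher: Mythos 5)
Your proposal is correct, and it fills in the details along essentially the same lines the paper indicates: the paper offers no proof beyond the remark that the lemma follows from Lemma~\ref{lem:abc-L}, and your argument does exactly that for the integrality (via the period-$4$ behaviour of $L_k \bmod 5$, valid for negative indices as well) while handling the symmetries and the common recurrence directly from the closed form, using $\lambda\overline{\lambda}=1$ and the factorization $x^3-2x^2-2x+1=(x+1)(x^2-3x+1)$. All the computations check out (e.g.\ $b_{-k}=\tfrac12(a_k-a_{k+1})=-c_k$, and the parity argument $a_{k+1}\equiv a_{k-2}\pmod 2$ with $a_0,a_1,a_2$ odd), so this is a complete and faithful proof of the lemma.
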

Let $\rho =\rho_n$ be a root of 
the quintic polynomial $f_n(X)$ and $G=\mathrm{Gal}(K_n/\Q)=\langle \sigma \rangle$.
The action of $(1-\sigma^2-\sigma^3)^k $ on $\rho$  is given by  the sequences $a_k, b_k$ and $c_k$ as follows.
\begin{lem}\label{lem:rho-action}
For any $k\in \Z$, we have the following.
\begin{align*}
(1-\sigma^2-\sigma^3)^k  \cdot \rho  & =a_k \rho +b_k (\rho^{(1)}+\rho^{(4)} )-c_k (\rho^{(2)}+\rho^{(3)} ) \\
& =(a_k +b_k (\sigma+\sigma^4) -c_k (\sigma^2+\sigma^3)) \cdot \rho
\end{align*}
\end{lem}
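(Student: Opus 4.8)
The plan is to prove the sharper, $\rho$-free identity in the group ring $\Z[G]$, namely
\[
(1-\sigma^2-\sigma^3)^k = a_k + b_k(\sigma+\sigma^4) - c_k(\sigma^2+\sigma^3) \qquad (k\in\Z),
\]
and then let both sides act on $\rho$. Since $\sigma^i.\rho=\rho^{(i)}$, the right-hand side becomes $a_k\rho + b_k(\rho^{(1)}+\rho^{(4)}) - c_k(\rho^{(2)}+\rho^{(3)})$, which is precisely the first displayed equality of the lemma; the second displayed equality is just this same group-ring element written as acting on $\rho$. In this way both assertions follow simultaneously, and no appeal to the normal basis property (Lemma~\ref{lem:NB}) is needed.

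I would argue by induction on $k$, propagating in both directions from the base cases. Writing $u:=1-\sigma^2-\sigma^3$, the case $k=0$ reads $u^0=1$ and is matched by $a_0=1,\,b_0=0,\,c_0=0$, while $k=1$ reads $u^1=1-\sigma^2-\sigma^3$ and is matched by $a_1=1,\,b_1=0,\,c_1=1$. For the inductive step I would multiply the assumed expression for $u^k$ on the left by $u$, expand, and reduce every exponent using $\sigma^5=1$. Collecting the coefficients of $1,\sigma,\sigma^2,\sigma^3,\sigma^4$ gives
\[
u^{k+1} = (a_k+2c_k) + c_k(\sigma+\sigma^4) - (a_k+b_k+c_k)(\sigma^2+\sigma^3).
\]
Note that this already has the symmetric shape under the involution $\sigma\leftrightarrow\sigma^4$, as it must, since $u$ is itself fixed by that involution; in particular the coefficients of $\sigma$ and $\sigma^4$ agree, and likewise for $\sigma^2$ and $\sigma^3$.

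Comparing the last display with the target form for exponent $k+1$ reduces the whole induction to the three numerical identities $a_{k+1}=a_k+2c_k$, $b_{k+1}=c_k$, and $c_{k+1}=a_k+b_k+c_k$. The middle one is the definition $c_k=b_{k+1}$ from \eqref{eq:abc}, and the first is just $2c_k=a_{k+1}-a_k$, again from \eqref{eq:abc}. The third, after clearing the factor $\tfrac12$ in the definitions of $b_k$ and $c_k$, is equivalent to the recurrence $a_{k+2}=2a_{k+1}+2a_k-a_{k-1}$, which is exactly the three-term relation for $a_k$ recorded in Lemma~\ref{lem:abc-rel}. Since $u$ is a unit with $u^{-1}=1-\sigma-\sigma^4$, the step $k\mapsto k+1$ is reversible, and the sequences $a_k,b_k,c_k$ satisfy their recurrences for every integer $k$ (Lemma~\ref{lem:abc-rel}); hence the identity propagates to all $k\in\Z$, negative as well as positive. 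The one point requiring care is purely clerical: keeping the exponents straight while reducing $\sigma^5=1$ in the product $u\cdot u^k$ and correctly amalgamating the five contributions to each basis monomial. There is no conceptual obstacle once this bookkeeping is organized, as the match with the defining recurrences of $a_k,b_k,c_k$ is then forced.
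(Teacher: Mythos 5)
Your proposal is correct and takes essentially the same approach as the paper, whose entire proof is the single sentence that the assertion ``can be shown by induction on $k$.'' Your expansion of $u\cdot u^k$ with $u=1-\sigma^2-\sigma^3$, the resulting recurrences $a_{k+1}=a_k+2c_k$, $b_{k+1}=c_k$, $c_{k+1}=a_k+b_k+c_k$, and their identification with \eqref{eq:abc} and Lemma~\ref{lem:abc-rel} (together with the reversibility remark for negative $k$) simply supply the details that this induction requires.
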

\begin{proof}\
From (\ref{eq:abc}) and Lemma~\ref{lem:abc-rel}, we have $a_k+2c_k=a_{k+1},\ c_k=b_{k+1},\ a_k+b_k+c_k=c_{k+1}$ for any $k\in \mathbb Z$.
The assertion can be shown by induction on $k$.
\end{proof}
By acting  $\pm \sigma^{\ell} (1-\sigma^2-\sigma^3)^k \, (\ell \in \Z/5\Z, k\in \Z)$  to the generator of an NIB obtained in Theorem~\ref{theo:main},
we can obtain all  generators of NIBs.
\begin{theo}\label{theo:all-NIB}
Let $n$ be an integer with $5\nmid n$ and $\beta_0, \beta_1, \beta_2, \beta_3, m \, (\in \Z)$ as in
Theorem~\ref{theo:main}. Put
\begin{align*}
\theta_0 (k) & :=a_k \beta_0 +b_k \beta_1-c_k\beta_2-c_k\beta_3, \\
\theta_1(k) & :=b_k \beta_0 +a_k \beta_1+b_k \beta_2 -c_k \beta_3, \\
\theta_2(k) & :=-c_k \beta_0 +b_k \beta_1+a_k \beta_2+b_k \beta_3, \\
\theta_3 (k) & :=-c_k \beta_0-c_k \beta_1+b_k \beta_2+a_k \beta_3, \\
\theta_4(k) & :=b_k\beta_0-c_k\beta_1-c_k \beta_2+b_k \beta_3.
\end{align*}
Then we have 
\[
 \{ x \ |\ \text{a generator of an NIB of $K_n$} \}  =\{ \pm \sigma^{\ell} \cdot  \xi_k \ |\ \ell \in \mathbb Z/5\mathbb Z,\ k\in \mathbb Z \},
\]
where
\[
\xi_k:= \frac{1}{bc^2d^3e^4} \left( \sum_{t=0}^4 \theta_t(k) \rho^{(t)}-(-1)^km \right)
\]
\end{theo}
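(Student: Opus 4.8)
The plan is to reduce the whole statement to one explicit computation. By Theorem~\ref{theo:main} the element
\[
\alpha=\frac{1}{bc^2d^3e^4}\left(\beta_0\rho+\beta_1\rho^{(1)}+\beta_2\rho^{(2)}+\beta_3\rho^{(3)}-m\right)
\]
is a generator of an NIB of $K_n$. By \cite[Lemma~2.3]{HA} every generator of an NIB has the form $u.\alpha$ for some $u\in\Z[G]^{\times}$, and by the description \eqref{eq:ZG} of $\Z[G]^{\times}$ each such $u$ equals $\pm\sigma^{\ell}(1-\sigma^2-\sigma^3)^k$ for some $\ell\in\Z/5\Z$ and $k\in\Z$. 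Hence the set of all generators is exactly $\{\pm\sigma^{\ell}(1-\sigma^2-\sigma^3)^k.\alpha\mid \ell\in\Z/5\Z,\ k\in\Z\}$, so it suffices to prove the single identity $\xi_k=(1-\sigma^2-\sigma^3)^k.\alpha$.

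To compute $(1-\sigma^2-\sigma^3)^k.\alpha$, I would write $\alpha=\frac{1}{bc^2d^3e^4}(\beta.\rho-m)$ with $\beta:=\beta_0+\beta_1\sigma+\beta_2\sigma^2+\beta_3\sigma^3\in\Z[G]$, and use $\Z[G]$-linearity to treat the rational constant $m$ and the element $\beta.\rho$ separately. Since $m\in\Z$ is fixed by $G$, the action on $m$ factors through the augmentation $\epsilon\colon\Z[G]\to\Z$; as $\epsilon(1-\sigma^2-\sigma^3)=1-1-1=-1$ and $\epsilon$ is a ring homomorphism, we get $(1-\sigma^2-\sigma^3)^k.m=(-1)^k m$, which accounts for the term $-(-1)^km$ in $\xi_k$.

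For the term $\beta.\rho=\sum_{i=0}^{3}\beta_i\rho^{(i)}$, commutativity of $\Z[G]$ together with Lemma~\ref{lem:rho-action} gives, for each $i$,
\[
(1-\sigma^2-\sigma^3)^k.\rho^{(i)}=\sigma^i.\bigl((1-\sigma^2-\sigma^3)^k.\rho\bigr)=a_k\rho^{(i)}+b_k(\rho^{(i+1)}+\rho^{(i+4)})-c_k(\rho^{(i+2)}+\rho^{(i+3)}),
\]
with superscripts read modulo $5$. Substituting this and collecting the coefficient of each $\rho^{(t)}$ produces precisely the quantities $\theta_t(k)$ of the statement, whose integrality follows from Lemma~\ref{lem:abc-rel}. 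Assembling the two parts yields $\xi_k=(1-\sigma^2-\sigma^3)^k.\alpha$, which completes the argument.

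The substantive inputs---the structure \eqref{eq:ZG} of $\Z[G]^{\times}$, the reduction of \cite[Lemma~2.3]{HA}, and the action formula of Lemma~\ref{lem:rho-action}---are all already available, so the only genuine work is the index bookkeeping modulo $5$ in the last step. This is routine; as a sanity check one verifies the case $k=0$, where $a_0=1$ and $b_0=c_0=0$ give $\theta_0(0)=\beta_0,\dots,\theta_3(0)=\beta_3$ and $\theta_4(0)=0$, so that $\xi_0=\alpha$ as it must. I expect no conceptual obstacle beyond this careful collection of terms.
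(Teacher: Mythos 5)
Your argument is correct and coincides with the paper's own proof: both reduce the statement to the description \eqref{eq:ZG} of $\Z[G]^{\times}$ together with the single computation $(1-\sigma^2-\sigma^3)^k.\alpha=\xi_k$, carried out via Lemma~\ref{lem:rho-action} by multiplying $\beta_0+\beta_1\sigma+\beta_2\sigma^2+\beta_3\sigma^3$ against $a_k+b_k(\sigma+\sigma^4)-c_k(\sigma^2+\sigma^3)$ in $\Z[G]$ and tracking the constant $m$ through the augmentation. Your bookkeeping of the $\theta_t(k)$ and the sign $(-1)^k$ matches the paper's direct calculation.
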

\begin{proof}\
Let 
\begin{align*}
\alpha & :=\dfrac{1}{bc^2d^3e^4} ( \beta_0 \rho +\beta_1 \rho^{(1)}+\beta_2 \rho^{(2)} +\beta_3 \rho^{(3)} -m) \\
& = \dfrac{1}{bc^2d^3e^4} ((\beta_0  +\beta_1 \sigma +\beta_2 \sigma^2 +\beta_3 \sigma^3 ) \cdot \rho -m) 
\end{align*}
be the generator of an NIB obtained by Theorem~\ref{theo:main}. 
It follows from Lemma~\ref{lem:rho-action} that
\begin{align*}
& \pm \sigma^{\ell} (1-\sigma^2-\sigma^3)^k \cdot  \alpha \\
& =\pm \frac{1}{bc^2d^3e^4} \sigma^{\ell} 
( (\beta_0+\beta_1 \sigma +\beta_2 \sigma^2 +\beta_3 \sigma^3 )(a_k+b_k (\sigma+\sigma^4)-c_k(\sigma^2+\sigma^3))  \cdot \rho
-(-1)^km).
\end{align*}
Furthermore,  direct calculations  yield 
\begin{align*}
& (\beta_0+\beta_1 \sigma +\beta_2 \sigma^2 +\beta_3 \sigma^3 )(a_k+b_k (\sigma+\sigma^4)-c_k(\sigma^2+\sigma^3)) \\
& = \theta_0(k)+\theta_1(k)\sigma +\theta_2(k) \sigma^2 +\theta_3(k) \sigma^3+\theta_4(k)\sigma^4.
\end{align*}
The proof is complete.
\end{proof}
\begin{cor}\label{cor:all-NIB-squarefree}
Let $n$ be an integer with $5\nmid n$ and $v:=\left(
n^2-\left(\frac{n}{5} \right)\right)/5$. Assume that $\Delta_n$ is square-free.
Then we have
\begin{align*}
& \{ x \, |\, \text{a generator of an NIB of $K_n$ } \}  =\{ \pm \sigma^{\ell}  \cdot  \xi_k \ |\ \ell \in \mathbb Z/5\mathbb Z, k\in \mathbb Z \},
\end{align*}
where $
\xi_k :=(-1)^k v+a_k \rho +b_k (\rho^{(1)}+\rho^{(4)} )-c_k (\rho^{(2)}+\rho^{(3)} )$.
\end{cor}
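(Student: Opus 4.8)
The plan is to obtain this corollary as a direct specialization of Theorem~\ref{theo:all-NIB} to the square-free case. First I would invoke Corollary~\ref{cor:NIB-squarefree}: when $\Delta_n$ is square-free one has $b=c=d=e=1$, and the integers furnished by Theorem~\ref{theo:main} are $\beta_0=1$ and $\beta_1=\beta_2=\beta_3=0$. In particular $bc^2d^3e^4=1$, so the normalizing denominator disappears, and the constant becomes $m=\tfrac{1}{5}\left(\left(\tfrac{n}{5}\right)-n^2\right)=-v$, where $v=\left(n^2-\left(\tfrac{n}{5}\right)\right)/5$ is the quantity in the statement.

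Next I would substitute these values of $\beta_0,\beta_1,\beta_2,\beta_3$ into the definitions of $\theta_t(k)$ appearing in Theorem~\ref{theo:all-NIB}. Since only $\beta_0$ survives, the five formulas collapse at once to $\theta_0(k)=a_k$, $\theta_1(k)=\theta_4(k)=b_k$ and $\theta_2(k)=\theta_3(k)=-c_k$. Feeding these into the general expression for $\xi_k$, together with $bc^2d^3e^4=1$ and the identity $-(-1)^k m=(-1)^k v$, produces
\[
\xi_k = (-1)^k v + a_k\rho + b_k(\rho^{(1)}+\rho^{(4)}) - c_k(\rho^{(2)}+\rho^{(3)}),
\]
which is exactly the claimed shape. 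The description of the full set of generators as $\{\pm\sigma^{\ell}.\,\xi_k \mid \ell\in\Z/5\Z,\ k\in\Z\}$ is then inherited verbatim from Theorem~\ref{theo:all-NIB}.

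There is essentially no genuine obstacle here beyond careful bookkeeping: the proof is a substitution into an already-proven parametrization. The only points that deserve attention are the sign conventions---namely verifying that $m$ specializes to $-v$ and hence that $-(-1)^k m=(-1)^k v$---and confirming the symmetries $\theta_1(k)=\theta_4(k)$ and $\theta_2(k)=\theta_3(k)$, which are precisely what collapse the four independent conjugates into the paired terms $\rho^{(1)}+\rho^{(4)}$ and $\rho^{(2)}+\rho^{(3)}$. Both facts follow transparently once $\beta_1=\beta_2=\beta_3=0$ is imposed, so I would present the argument as a short computation rather than a separate inductive or structural step.
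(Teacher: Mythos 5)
Your proposal is correct and follows essentially the same route as the paper: the authors likewise specialize Theorem~\ref{theo:all-NIB} using $b=c=d=e=1$, $\beta_0=1$, $\beta_1=\beta_2=\beta_3=0$ to get $\theta_0(k)=a_k$, $\theta_1(k)=\theta_4(k)=b_k$, $\theta_2(k)=\theta_3(k)=-c_k$. Your explicit verification that $m$ specializes to $-v$ (hence $-(-1)^k m=(-1)^k v$) is a detail the paper leaves implicit, but the argument is the same.
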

\begin{proof}\
It follows that $\beta_0=1,\, \beta_1=\beta_2=\beta_3=0$  from $ b=c=d=e=1$ since $\Delta_n$ is square-free.
Therefore, we obtain $\theta_0(k)=a_k,\, \theta_1(k)=\theta_4(k)=b_k,\, \theta_2(k)=\theta_3(k)=-c_k$.
From Theorem~\ref{theo:all-NIB},  the generators of all NIBs of $K_n$ are given by $\pm \sigma^{\ell} \cdot \alpha_k$ for some $\ell \in \Z/5\Z,\, k\in \mathbb Z$.
\end{proof}
The following example is the case of  $n=-1$ which Davis, Eloff, Spearman and Williams considered in \cite{DES,ESW2}.
In this case, we have $K_n=\Q (\zeta_{11}+\zeta_{11}^{-1})$ and $f_n(X)=X^5+X^4-4X^3-3X^2+3X+1$ is the minimal polynomial of $\zeta_{11}+\zeta_{11}^{-1}$.
\begin{ex}\label{ex:n=-1}
Let $n=-1$, we have $f_n(X)=X^5+X^4-4X^3-3X^2+3X+1,\,  \Delta_n=11,\, a=11,\, b=c=d=e=1,\, \delta_n=1,\,\mathfrak f_{K_n}=11$ and $D_{K_n}=11^4$.
Furthermore, we have $\rho^{(1)}=\rho^4-4\rho^2+2,\ \rho^{(2)}=-\rho^4-\rho^3+3\rho^2+2\rho-1,\ \rho^{(3)} =\rho^2-2$ and
$\rho^{(4)} =\rho^3-3\rho$.
Since $v=\left(n^2-\left(\frac{n}{5} \right)\right)/5=0$, it follows from Corollary~\ref{cor:all-NIB-squarefree} that
\[
\{ x \, | \, \text{  a generator of an NIB of $K_n$} \} = \{ \pm \sigma^{\ell} \cdot \xi_k \, |\, 
\ell \in \Z/5\Z,\, k\in \Z \},
\]
where 
\begin{align*}
\xi_k &  = a_k \rho +b_k (\rho^{(1)}+\rho^{(4)} )-c_k (\rho^{(2)}-\rho^{(3)} )  \\
& = \frac{(-1)^k+2L_{2k}}{5} \rho +\frac{(-1)^k+L_{2k-1}}{5}
 (\rho^{(1)}+\rho^{(4)} )-\frac{(-1)^{k+1}+L_{2k+1}}{5} (\rho^{(2)}+\rho^{(3)} )\\
 & =\frac{1}{5} ( (-1)^{k+1}+2L_{2k-1}+3L_{2k+1} ) -L_{2k-1} \, \rho-\frac{4}{5} (L_{2k-1}+L_{2k+1})
 \rho^2 \\
 & \quad +\frac{1}{5} (L_{2k-1}+L_{2k+1}) \, \rho^3+ \frac{1}{5} (L_{2k-1}+L_{2k+1}) \, \rho^4.
 \end{align*}
\end{ex}
\begin{rem}\label{rem:n=-1} Let $F_k$ be the Fibonacci number defined by $F_0:=0,\,  F_1:=1$ and $ F_k=F_{k-1}+F_{k-2}$.
In \cite[theorem~1]{DES}, they showed that
\[
\{ x \, | \, \text{ a generator of an NIB of $\mathbb Q (\zeta_{11}+\zeta_{11}^{-1})$} \} = \{ \pm \sigma^{\ell} \cdot \gamma_k \, |\, 
\ell \in \Z/5\Z,\, k\in \Z \}
\]
where 
\[
\gamma_k :=\frac{1}{10}(25F_{2k}+(-1)^k L_{2k}-2) +\frac{1}{2} (-5F_{2k} +(-1)^k L_{2k})\, \rho -4F_{2k}\,  \rho^2+F_{2k}\, \rho^3 +F_{2k}\, \rho^4.
\]
Using formulae $F_{-k}=(-1)^{k+1} F_k,\, L_{-k}=(-1)^k L_k$ and
$L_{k-1}+L_{k+1}=5F_k$, it  can be shown that the relation between $\gamma_k$ and  $\xi_k$ in Example~\ref{ex:n=-1} 
 given by $\xi_k =(-1)^k \gamma_{(-1)^kk} $ holds for any $k\in \Z$.
\end{rem}
\begin{ex}\label{ex:NIBn=14}
Let $n=14$. We have $f_n(X)=X^5+196X^4-6814X^3+54507X^2+3678X+1,\,  \Delta_n=11\cdot 71^2,\, a=11,\, b=71,\,
c=d=e=1,\, \delta_n=7^2\cdot 79,\,\mathfrak f_{K_n}=11\cdot 71$ and $D_{K_n}=11^4\cdot 71^4$.
It follows from Theorem~\ref{theo:all-NIB} that
\[
\{ x \, | \, \text{  a generator of an NIB of $K_n$} \} = \{ \pm \sigma^{\ell} \cdot \xi_k \, |\, 
\ell \in \Z/5\Z,\, k\in \Z \},
\]
where 
\begin{align*}
\xi_k  = & (\theta_0(k)\rho+\theta_1(k)\rho^{(1)}+\theta_2(k)\rho^{(2)}+\theta_3(k)\rho^{(3)}+\theta_4(k)\rho^{(4)}+(-1)^k1201)/71,\\
\theta_0(k) = &\frac{31}{5}(-1)^k -L_{2k-1}-\frac{6}{5}L_{2k+1},  \\
\theta_1(k) =& \frac{31}{5} (-1)^k+ \frac{4}{5} L_{2k+1}, \\
\theta_2(k) = & \frac{31}{5}(-1)^k+ \frac{1}{5} L_{2k-1} +2 L_{2k+1},\\
\theta_3(k) = & \frac{31}{5} (-1)^k -\frac{12}{5} L_{2k-1} +\frac{7}{5} L_{2k+1} , \\
\theta_4(k)  =& \frac{31}{5} (-1)^k+ \frac{16}{5} L_{2k-1} -3 L_{2k+1} .
 \end{align*}
The following table shows $\xi_k$ for $k$ satisfying $-5\leq k\leq 5$.
\begin{table}[H]\label{table:n=14}
\caption{$\xi_k$ for $-5 \leq k\leq 5$}
 \begin{center}
{\scriptsize
 \begin{tabular}{|c||c|}
 \hline
\rule{0pt}{4mm}   $k$ & $\xi_k$  \\  \hline  \hline 
\rule{0pt}{4mm}   $-5$ &   $(71+1451\rho-304\rho^{(1)} -959\rho^{(2)} +1856\rho^{(3)}-2044\rho^{(4)} )/355$\\  \hline  \hline 
\rule{0pt}{4mm}   $-4$ &   $(-71+554\rho-116\rho^{(1)} -366\rho^{(2)} +709\rho^{(3)}-11\rho^{(4)} )/355$\\  \hline  \hline 
\rule{0pt}{4mm}   $-3$ &  $(71+211\rho-44\rho^{(1)} -139\rho^{(2)} +271\rho^{(3)}-299\rho^{(4)} )/355$\\  \hline  \hline 
\rule{0pt}{4mm}   $-2$ &   $(-71+79\rho-16\rho^{(1)} -51\rho^{(2)} +104\rho^{(3)}-116\rho^{(4)} )/355$ \\  \hline  \hline 
\rule{0pt}{4mm}   $-1$ &   $(71+26\rho-4 \rho^{(1)} -14\rho^{(2)} +41\rho^{(3)}-49\rho^{(4)} )/355$\\  \hline  \hline 
\rule{0pt}{4mm}   $0$ &  $(-71-\rho+4 \rho^{(1)} +9\rho^{(2)} +19\rho^{(3)}-31\rho^{(4)} )/355$\\  \hline  \hline 
\rule{0pt}{4mm}   $1$ &   $(71-29\rho+16\rho^{(1)} +41\rho^{(2)} +16\rho^{(3)}-44\rho^{(4)} )/355$ \\  \hline  \hline 
\rule{0pt}{4mm}   $2$ &  $(-71-86\rho+44\rho^{(1)} +114\rho^{(2)} +29\rho^{(3)}-101\rho^{(4)} )/355$ \\  \hline  \hline 
\rule{0pt}{4mm}   $3$ & $(71-229\rho+116\rho^{(1)} +301\rho^{(2)} +71\rho^{(3)}-259\rho^{(4)} )/355$ \\  \hline  \hline 
\rule{0pt}{4mm}   $4$ &  $(-71-601\rho+304\rho^{(1)} +789\rho^{(2)} +184\rho^{(3)}-676\rho^{(4)} )/355$ \\  \hline  \hline 
\rule{0pt}{4mm}   $5$ & $(71-1574\rho+796\rho^{(1)} +2066\rho^{(2)} +481\rho^{(3)}-1769\rho^{(4)} )/355$ \\  \hline  \hline 
\end{tabular}
}
\end{center}
\end{table}
\end{ex}

\end{document}